\numberwithin{equation}{section}
 \newtheorem{lemma}{Lemma}[section]
 \newtheorem{theorem}{Theorem}
 \newtheorem{corollary}[lemma]{Corollary}
 \theoremstyle{remark}
 \newtheorem{remark}{Remark}[section]
\newcommand{\lrn}{\left|\!\left|\!\left|}
\newcommand{\rrn}{\right|\!\right|\!\right|}
\begin{document}

\title{\sc One-Dimensional Compressible Heat-Conducting Gas with
Temperature-Dependent Viscosity}
\author{{\sc Tao Wang}\footnote{School of Mathematics and Statistics,
Wuhan University, Wuhan 430072, China (tao.wang@whu.edu.cn).
The research of this author was supported in part by the Fundamental
Research Funds for the Central Universities
and Project funded by China Postdoctoral Science Foundation.}
\quad and\quad
{\textsc{Huijiang Zhao}}\footnote{Computational Science Hubei Key Laboratory, School of Mathematics and Statistics,
Wuhan University, Wuhan 430072, China (hhjjzhao@whu.edu.cn).
The research of this author was supported in part by grants from the National Natural Science Foundation of China under contracts 10925103, 11271160, and 11261160485, respectively.}}

\date{\today}

\maketitle
\begin{abstract}
We consider the one-dimensional compressible Navier--Stokes system for a viscous and heat-conducting ideal polytropic gas 
when the viscosity $\mu$ and the heat conductivity $\kappa$ depend on the specific volume $v$ and the temperature $\theta$ 
and are both proportional to $h(v)\theta^{\alpha}$ for certain non-degenerate smooth function $h$.
We prove the existence and uniqueness of a global-in-time non-vacuum solution to its Cauchy problem
under certain assumptions on the parameter $\alpha$ and initial data, which imply that 
the initial data can be large if $|\alpha|$ is sufficiently small. 
Our result appears to be the first global existence result for general adiabatic exponent and large initial data
when the viscosity coefficient depends on both the density
and the temperature.

  \bigbreak
  {\bf Keywords:} Compressible Navier--Stokes system;  Temperature-dependent viscosity;
  Ideal polytropic gas;
  Global solutions;
  Large initial data;
  General adiabatic exponent
  \bigbreak
  {\bf Mathematics Subject Classification:}
  35Q35 (35B40, 76N10)
\end{abstract}


	\section{Introduction}
	The one-dimensional motion of a compressible viscous
	and heat-conducting fluid
	can be formulated in the Lagrangian coordinates as
	\begin{equation} \label{NS_L}
		\left\{
		\begin{aligned}
			v_t-u_x&=0,\\
			u_t+P_x&=\left[\frac{\mu u_x}v\right]_x,\\[-0.5mm]
			\left[e+\frac{u^2}{2}\right]_t+(uP)_x&=
			\left[\frac{\kappa\theta_x}v+\frac{\mu uu_x}{v}\right]_x.
		\end{aligned}
		\right.
	\end{equation}
	Here $t>0$ is the time variable, $x\in \mathbb{R}$ is the Lagrangian spatial variable, and the primary dependent variables are the specific volume $v$, the fluid velocity $u$, and the temperature $\theta$.
	The pressure $P$, the specific internal energy $e$, and the transport coefficients $\mu$ (viscosity)
	and $\kappa$ (heat conductivity) are prescribed through constitutive relations as functions of
	the specific volume $v$ and the temperature $\theta$. The thermodynamic variables $v$, $P$, $e$, and $\theta$  are related through  Gibbs equation $\mathrm{d}e=\theta \mathrm{d}s-P\mathrm{d}v$
	with $s$ being the specific entropy.
	
	This paper concerns the construction of globally smooth non-vacuum solutions to the Cauchy problem
	of \eqref{NS_L} for an ideal polytropic gas, which is identified by the constitutive relations
	\begin{equation}\label{ideal}
		P=\frac{\theta}{v}=v^{-\gamma}\exp\left(\frac{s}{c_v}\right),
		\quad e=c_v\theta,
	\end{equation}
	with prescribed initial data
	\begin{equation}\label{initial}
		(v(t,x), u(t,x),\theta(t,x))|_{t=0}
		=(v_0(x), u_0(x), \theta_0(x)) \quad {\rm for }\ x\in\mathbb{R}.
	\end{equation}
	Here $c_v=1/(\gamma-1)$ is the specific heat at constant volume
	with $\gamma>1$ being the adiabatic exponent and
	some gas constants involved have been normalized to be unity
	without loss of generality. It is assumed that the initial data $(v_0, u_0, \theta_0)$ satisfy the far-field condition
	\begin{equation}\label{far}
		\lim_{x\to\pm\infty}(v_0(x), u_0(x), \theta_0(x))=(1,0,1).
	\end{equation}

	We are interested in the case when the transport coefficients $\mu$ and $\kappa$,
	especially the viscosity $\mu$, depend on both the specific volume $v$ and the temperature $\theta$.
	Recall that the study on such a dependence is motivated by the following three observations:
	\begin{list}{\setlength{\parsep}{\parskip}
			\setlength{\itemsep}{0.1em}
			\setlength{\labelwidth}{1.5em}
			\setlength{\labelsep}{0.4em}
			\setlength{\leftmargin}{2em}
			\setlength{\topsep}{1mm}
		}
		\item[(i)] for certain class of
		solid-like materials considered
		in  \cite{Da82MR653464} and \cite{DH82MR661710},
		both the viscosity coefficient $\mu$
		and the heat conductivity coefficient $\kappa$ may depend on
		the density and/or temperature;
		\item[(ii)]
		experimental results in  \cite{ZR67physics}
		show that the transport coefficients $\mu$ and $\kappa$ vary in terms of
		temperature and density
		for gases at very high
		temperature and density;
		\item[(iii)] if the compressible
		Navier--Stokes equations \eqref{NS_L}
		are
		derived from the
		Boltzmann equation with slab symmetry
		for the monatomic gas
		by using the Chapman--Enskog expansion,
		the constitutive relations between thermodynamic variables
		satisfy \eqref{ideal} and the transport coefficients
		$\mu$ and $\kappa$ depend only on the temperature.
		Moreover,
		the functional dependence is the same for both coefficients
		(see  \cite{CC90MR1148892}
		and \cite{VK65}).
		In particular, if the intermolecule potential
		varies as $r^{-a}$ with $r$ being the molecule distance, then
		\begin{equation}\label{transport0}
			\mu=\tilde{\mu}\theta^\alpha,\quad \kappa=\tilde{\kappa}\theta^\alpha,
		\end{equation}
		where $\tilde{\mu}$, $\tilde{\kappa}$,
		and $\alpha=\tfrac{a+4}{2a}>\tfrac 12$  are positive constants.
	\end{list}
	
	
	The crucial step to construct the global solutions of
	the compressible Navier--Stokes equations
	\eqref{NS_L} with large initial data
	is to obtain the positive upper and lower bounds
	of the specific volume $v$ and the temperature $\theta$,
	which has been shown in  \cite{KO82MR694940}
	for small and sufficiently smooth data.
	When the viscosity and the heat conductivity coefficients
	are positive constants, Kazhikhov et al. \cite{AKM90MR1035212,K82MR651877,KS77MR0468593}
	succeeded in deriving a representation for specific volume $v$
	by employing the special structure of ideal polytropic gases
	\eqref{NS_L}--\eqref{ideal}. By mean of the representation
	for $v$ and the maximum principle, the positive upper and lower bounds
	of $v$ and $\theta$ as well as the existence and uniqueness of
	globally smooth solutions have been obtained
	in  \cite{AKM90MR1035212,K82MR651877} and \cite{KS77MR0468593}
	for \eqref{NS_L}--\eqref{ideal} with arbitrarily large initial data.
	See also 
	\cite{AZ92MR1187869,J98MR1748226,J99MR1671920,J02MR1912419,KN81MR637519} and 
	\cite{ZA97MR1474911} for related studies.
	In all of these works no vacuum nor concentration
	of mass occur in a finite time.

	We note that this argument can be applied to the case
	when the viscosity $\mu$ is a constant and
	the heat conductivity $\kappa$ is some function of
	temperature $\theta$
	(see  \cite{JK10MR2644363,PZ15MR3291375} and
	\cite{W16MR3461630}).
	But this methodology seems not valid if the viscosity $\mu$
	is a non-constant function of  $v$ and  $\theta$.
	For the case when the viscosity $\mu$ is a function of
	the specific volume $v$ alone, as observed by
	Kanel$'$ \cite{Ka68MR0227619} for the
	isentropic flow, the identity
	\begin{equation}\label{id_Kanel}
		\left[\frac{\mu(v)v_x}{v}\right]_t=u_t+P_x
	\end{equation}
	holds even for general gases.
	By employing this identity, one can
	deduce global solvability results on the compressible
	Navier--Stokes equations \eqref{NS_L} with large data
	for certain types of
	density-dependent viscosity, and density and temperature
	dependent heat conductivity.
	See  \cite{CHT00MR1789926,Da82MR653464,DH82MR661710,Ka85MR791841,TYZZMR3032988},
	and references therein
	for some representative works in this direction.
	
	When the viscosity $\mu$ depends on the temperature $\theta$
	and the specific volume $v$,
	the identity corresponding to \eqref{id_Kanel} becomes
	\begin{equation}\label{id_TDV}
		\left[\frac{\mu (v,\theta)v_x}{v}\right]_t
		=u_t+P_x+
		\frac{\mu_{\theta}(v,\theta)  }{v}\left(\theta_tv_x-u_x\theta_x\right)
	\end{equation}
	with $\mu_{\theta}(v,\theta):={\partial \mu(v,\theta)}/{\partial \theta}$. The temperature dependence
	of the viscosity $\mu$ has a strong influence on the solution and leads to
	difficulty in mathematical analysis for global solvability with large data.
	As pointed out in \cite{JK10MR2644363}, such a dependence has turned
	out to be especially problematic and challenging. One of the main difficulties in analysis
	arises from the last term in \eqref{id_TDV}, which is a highly nonlinear term.
	
	A possible way to go on is to use some ``smallness mechanism" induced
	by the structure of the equations \eqref{NS_L} to control the last term
	in \eqref{id_TDV} suitably. A recent progress along this way is
	a Nishida--Smoller type global solvability result with large data
	obtained in  \cite{LYZZMR3225502} for the Cauchy problem
	\eqref{NS_L}--\eqref{far} when the
	viscosity
	$\mu$ and the
	heat conductivity $\kappa$ are both functions of the temperature.
	The main observation in  \cite{LYZZMR3225502} is that for
	ideal polytropic gases \eqref{NS_L}--\eqref{ideal},
	the temperature $\theta$ satisfies
	\begin{equation*}
		\theta=v^{1-\gamma}\mathrm{e}^{(\gamma-1)s}
		\quad {\rm and}  \quad 
		\frac{\theta_t}{\gamma-1}+\frac{\theta u_x}{v}=
		\frac{\mu u^2_x}{v}+\left[\frac{\kappa\theta_x}{v}\right]_x,
	\end{equation*}
	from which one can deduce that
	$\|(\theta-1,\theta_t ,\theta_x )\|_{L^\infty([0,T]\times\mathbb{R})}$
	can be small under the condition that
	the adiabatic exponent $\gamma$ is close to $1$.
	Thus one can perform the desired energy-type
	a priori estimates as in 
	\cite{DLZ09MR2439413,KN81MR637519,NYZ04MR2083790}
	and \cite{OK83MR692729}
	based on the a priori assumption
	\begin{equation}\label{key2_TYZZ}
		\tfrac 12\leq\theta(t,x)\leq 2\quad {\rm for\ all}\
		(t,x)\in[0,T]\times\mathbb{R}.
	\end{equation}
	It is to close the a priori assumption \eqref{key2_TYZZ}
	on $\theta(t,x)$ that one needs to impose that the initial
	data satisfies a Nishida--Smoller type condition, that is,
	\begin{equation*}
		(\gamma-1)\times C\big(\|(v_0-1,u_0,s_0-1)\|_{H^3(\mathbb{R})},
		\inf\limits_{x\in{\mathbb{R}}}v_0(x)\big)\leq 1
	\end{equation*}
	for some  $(\gamma-1)$-independent smooth function $C$.

	The result obtained in  \cite{LYZZMR3225502}
	shows that $\|(v_0-1,u_0,s_0-1)\|_{H^3(\mathbb{R})}$
	can be large. However,
	the oscillation
	of the temperature, $\|\theta-1\|_{L^\infty([0,T]\times\mathbb{R})}$,
	is not arbitrarily large but has to be small.
	Thus a natural question is:
	{\it Whether can we obtain a global solvability result
		for the Cauchy problem \eqref{NS_L}--\eqref{far}
		with large initial data and general adiabatic exponent $\gamma$
		for a class of temperature and  density dependent
		viscosity coefficient $\mu$ or not?}
	
	The main goal of this paper is devoted to
	the above problem and our motivation is essentially
	the same as that of  \cite{LYZZMR3225502} mentioned above,
	that is {\it to use some ``smallness mechanism"
		induced by the structure of the equations \eqref{NS_L}
		to control the last term  in \eqref{id_TDV} suitably.}
	We note that
	$\gamma-1$ cannot be assumed to be small
	for the case with general adiabatic exponent.
	In this paper, we will
	try to use the smallness of $|\mu_\theta(v,\theta)|$
	to control the possible growth of the solutions
	of the Cauchy problem \eqref{NS_L}--\eqref{far}
	caused by the last term in \eqref{id_TDV}.
	Motivated by such an idea,
	we assume throughout the rest of this paper
	that the viscosity $\mu$ and the heat conductivity
	$\kappa$ are smooth functions of the temperature
	$\theta$ and the specific volume $v$, which are given by
	\begin{equation}\label{transport}
		\mu=\tilde{\mu}h(v)\theta^{\alpha},\quad
		\kappa=\tilde{\kappa}h(v)\theta^{\alpha},
	\end{equation}
	where $\tilde{\mu}$ and $\tilde{\kappa}$ are positive constants,
	and there exist positive constants $C$, $\ell_1$, and $\ell_2$
	such that
	\begin{equation}\label{h}
		C h(v)\geq v^{\ell_1}+v^{-\ell_2},
		\quad h'(v)^2v\leq C h(v)^3 \quad {\rm for\ all}\ v\in(0,\infty).
	\end{equation}
	We expect to obtain a global solvability result to
	the Cauchy problem  \eqref{NS_L}--\eqref{far}
	with large data and transport coefficients
	\eqref{transport}--\eqref{h}
	for general adiabatic exponent
	$\gamma$ provided that $|\alpha|$ is sufficiently small.
	
	The very reason why we choose
	$\mu$ and $\kappa$
	as in \eqref{transport}
	is that the transport coefficients
	\eqref{transport}--\eqref{h} with
	$\ell_1=\ell_2=0$ can include \eqref{transport0}
	as a special example.
	Moreover,
	the special form \eqref{transport} of
	the viscosity $\mu$ with $h(v)$ satisfying \eqref{h}
	is essential
	in our argument and the role is two-fold:
	\begin{list}{\setlength{\parsep}{\parskip}
			\setlength{\itemsep}{0.1em}
			\setlength{\labelwidth}{1.5em}
			\setlength{\labelsep}{0.4em}
			\setlength{\leftmargin}{2em}
			\setlength{\topsep}{1mm}
		}
		\item [(i)] Firstly,
		we will employ the smallness of resulting factor $|\alpha|$
		to control the last term in \eqref{id_TDV};
		\item [(ii)] Secondly, the assumption \eqref{h} imposed on $h(v)$
		will be used to yield some estimates
		on the lower and upper bounds for the specific volume $v(t,x)$
		in terms of $\|\theta\|_{L^\infty([0,T]\times\mathbb{R})}$.
	\end{list}
	As for the heat conductivity $\kappa$,
	the choice as in \eqref{transport} is not so crucial and
	can be replaced by some more general function of $v$ and $\theta$
	which satisfies certain conditions in terms of
	the parameters $\ell_1$, $\ell_2$, and $\alpha$.
	Such a generalization is straightforward
	and hence we will focus on the case when $\kappa$ is given by
	\eqref{transport}--\eqref{h} for simplicity of presentation.

	We introduce
	\begin{equation}
		\label{H}
		H(w):=
		\sup_{w\leq \sigma\leq w^{-1}}\left|\left(h(\sigma), h'(\sigma), h''(\sigma), h'''(\sigma)\right)\right|
		\quad \textrm{for } w>0,
	\end{equation}
	and state
	our main result as follows.
	\begin{theorem}
		\label{thm} Assume that the viscosity $\mu$ and the heat conductivity $\kappa$
		satisfy \eqref{transport}--\eqref{h} for some $\ell_1\geq 1$
		and $\ell_2\geq1$.
		Let  the initial data  $(v_0, u_0, \theta_0)$ satisfy that
		\begin{gather}\label{thm1a}
			(v_0-1,u_0,\theta_0-1)\in H^3(\mathbb{R}),\quad 
			\|(v_0 -1,u_0 ,\theta_0 -1)\|_{H^3(\mathbb{R})}\leq \Pi_0,\\
			\label{thm1b}
			V_0\leq v_0(x)\leq V_0^{-1},\quad
			\theta_0(x)\geq V_0\quad
			\textrm{for\ all}\quad x\in\mathbb{R},
		\end{gather}
		where $\Pi_0$ and $V_0$ are positive constants.
		Then there exists $\epsilon_0>0$,
		which depends only on
		$\Pi_0$, $V_0$, and
		$H(C_0)$
		with positive constant $C_0$
		depending only on
		$\Pi_0$, $V_0$,
		and $H(V_0)$,
		such that
		the Cauchy problem
		\eqref{NS_L}--\eqref{far}
		with $|\alpha|\leq \epsilon_0$
		admits a unique
		solution  $(v(t,x), u(t,x), \theta(t,x))$ satisfying
		\begin{gather} \label{thm2a}
			(v -1,u,\theta -1)\in C([0,\infty),H^3(\mathbb{R})),\\
			\label{thm2b}
			v_x \in L^2(0,\infty;H^2(\mathbb{R})),
			\quad
			(u_x,\theta_x )\in L^2(0,\infty;H^3(\mathbb{R})),
		\end{gather}
		and
		\begin{equation}\label{thm3}
			\inf_{(t,x)\in[0,\infty)\times\mathbb{R}}
			\left\{v(t,x),\theta(t,x)\right\}>0,\quad
			\sup_{(t,x)\in[0,\infty)\times\mathbb{R}}
			\left\{v(t,x),\theta(t,x)\right\}<+\infty.
		\end{equation}
		Furthermore, the solution $(v, u, \theta)$
		converges to $(1,0,1)$ uniformly as time
		tends to infinity:
		\begin{equation}\label{thm4}
			\lim_{t\to \infty}\sup_{x\in\mathbb{R}}
			\big|(v(t,x)-1,u(t,x),\theta(t,x)-1)\big|=0.
		\end{equation}
	\end{theorem}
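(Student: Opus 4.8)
The plan is to combine a standard local existence theorem with uniform-in-time a priori estimates and a continuation argument. First I would establish local well-posedness of \eqref{NS_L}--\eqref{far} in the class \eqref{thm2a}--\eqref{thm2b} on a short interval $[0,T_*]$ by a fixed-point iteration; this is classical once the initial data satisfy \eqref{thm1a}--\eqref{thm1b} and $v_0,\theta_0$ are bounded away from zero. The heart of the matter is then to derive a priori estimates on any interval $[0,T]$ on which the solution exists, showing that $\|(v-1,u,\theta-1)\|_{H^3(\mathbb{R})}$, together with positive lower and upper bounds for $v$ and $\theta$, can be controlled in terms of $\Pi_0$, $V_0$, and $H(\cdot)$ alone, provided $|\alpha|\le\epsilon_0$. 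Concretely I would run a continuity argument based on the a priori assumption
\begin{equation*}
	M^{-1}\le v(t,x)\le M,\qquad N^{-1}\le \theta(t,x)\le N,\qquad (t,x)\in[0,T]\times\mathbb{R},
\end{equation*}
for suitable constants $M,N$, and show that under it $v$ and $\theta$ in fact satisfy the \emph{same} bounds with $M,N$ replaced by strictly smaller/larger quantities; choosing $\epsilon_0$ small enough at the end makes the loop close, and then $T$ can be taken to be $+\infty$.

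\textbf{Zeroth-order estimates and the volume bound.} Integrating \eqref{NS_L} over $[0,t]\times\mathbb{R}$ gives conservation of the total energy $\int_{\mathbb{R}}(u^2/2+e)\,\mathrm{d}x$, while the entropy balance coming from Gibbs' relation yields the dissipation bound
\begin{equation*}
	\int_0^T\!\!\int_{\mathbb{R}}\left(\frac{\mu u_x^2}{v\theta}+\frac{\kappa\theta_x^2}{v\theta^2}\right)\mathrm{d}x\,\mathrm{d}t\le C(\Pi_0,V_0).
\end{equation*}
The key step is the pointwise control of $v$. Using the Kanel'-type identity \eqref{id_TDV}, one integrates in time to represent $h(v)v_x/v$ and then integrates in $x$ along constant-mass curves to obtain a representation for $v$ itself. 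The only genuinely new term, $\mu_\theta(\theta_tv_x-u_x\theta_x)/v$ in \eqref{id_TDV}, carries the factor $\mu_\theta/\mu=\alpha/\theta$ and hence a coefficient of size $|\alpha|\le\epsilon_0$, so it is absorbed into the good terms. The structural hypotheses \eqref{h} on $h$ --- namely $Ch(v)\ge v^{\ell_1}+v^{-\ell_2}$ with $\ell_1,\ell_2\ge1$ and $h'(v)^2v\le Ch(v)^3$ --- are exactly what converts the integral bounds into pointwise lower and upper bounds
\begin{equation*}
	0<C_1\big(\|\theta\|_{L^\infty([0,T]\times\mathbb{R})}\big)\le v(t,x)\le C_2\big(\|\theta\|_{L^\infty([0,T]\times\mathbb{R})}\big),
\end{equation*}
with $C_1,C_2$ depending only on $\Pi_0$, $V_0$, $H(\cdot)$, and on $N=\|\theta\|_{L^\infty}$.

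\textbf{Temperature bounds and higher-order estimates.} With $v$ pinned between positive constants, I would next establish the uniform bounds $N^{-1}\le\theta\le N$ with $N$ independent of $T$. Writing the internal-energy equation as $c_v\theta_t+Pu_x=(\kappa\theta_x/v)_x+\mu u_x^2/v$, the non-negativity of the frictional source together with first-order energy estimates on $u_x$ and $\theta_x$ --- obtained by differentiating the momentum and energy equations, using the established bounds on $v$, and again using $|\alpha|\le\epsilon_0$ to tame the factors $\theta^\alpha$ and $\mu_\theta$ --- yields an upper bound for $\theta$ by an $L^p$/maximum-principle argument and a lower bound by estimating $1/\theta$. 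Feeding these back closes the continuity argument. The $H^2$ and $H^3$ estimates for $(v-1,u,\theta-1)$ and the space-time bounds \eqref{thm2b} then follow by the usual hierarchy of differentiated energy estimates and Gronwall's inequality, now that all coefficients are uniformly bounded and uniformly elliptic; every constant is $T$-independent. Hence the local solution extends to $[0,\infty)$, giving \eqref{thm2a}--\eqref{thm3}, and uniqueness follows from a standard energy estimate for the difference of two solutions. Finally, \eqref{thm4} is obtained from $\int_0^\infty(\|v_x\|_{H^1(\mathbb{R})}^2+\|(u_x,\theta_x)\|_{H^1(\mathbb{R})}^2)\,\mathrm{d}t<\infty$ together with the boundedness of the time derivative of these quantities, which forces $\|(v_x,u_x,\theta_x)\|_{H^1(\mathbb{R})}\to0$, and then the embedding $H^1(\mathbb{R})\hookrightarrow L^\infty(\mathbb{R})$.

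\textbf{Main obstacle.} The principal difficulty is the \emph{simultaneous} closure of the pointwise bounds for $v$ and $\theta$: because $\mu$ depends on $\theta$, the Kazhikhov-type explicit representation for $v$ from the constant-viscosity theory is unavailable, and instead the volume bounds depend on $\|\theta\|_{L^\infty}$ while the $\theta$-bounds depend on the volume bounds. Making this loop close requires both that the highly nonlinear term in \eqref{id_TDV} appears with the small factor $|\alpha|$ and that the growth/decay and non-degeneracy encoded in \eqref{h} are strong enough; the delicate point is the bookkeeping of constants --- first fixing an intermediate $C_0$ depending on $\Pi_0,V_0,H(V_0)$, then choosing $\epsilon_0$ depending on $\Pi_0,V_0,H(C_0)$ --- so that the improved bounds are strictly better than the assumed ones.
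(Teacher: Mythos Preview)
Your overall architecture matches the paper's: local existence, entropy estimate, Kanel'-type control of $v$ via \eqref{id_TDV} with the extra term absorbed using $|\alpha|\le\epsilon_0$, then $H^1$--$H^3$ energy estimates and continuation. The upper bound on $\theta$ and the two-sided bounds on $v$ do close as you describe, and the paper carries this out essentially as you sketch (Lemmas \ref{L_bas}--\ref{L_th2}).

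There is, however, a genuine gap in your treatment of the \emph{lower} bound on $\theta$. You write that a lower bound follows ``by estimating $1/\theta$'' and that ``every constant is $T$-independent,'' but the maximum-principle estimate for $1/\theta$ only gives
\[
\theta(t,x)\ge \frac{\inf_x\theta(s,x)}{C_4\,\inf_x\theta(s,x)\,(t-s)+1},
\]
which degrades as $t-s\to\infty$. A naive bootstrap of the form ``assume $N^{-1}\le\theta\le N$, show the bounds strictly improve'' therefore cannot close for the lower bound: the improved constant would depend on $T$, and the higher-order estimates (which are what determine the admissible $\epsilon_0$) depend on this lower bound $m_2$. This is exactly the circularity the paper has to break, and it does \emph{not} break it by a straight continuity argument.

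The paper's device is a renewal argument. Once the $T$-independent $H^1$ bound $\|(\theta-1)(t)\|_1\le C_3$ and the dissipation bound $\int_0^T\|\theta_x\|_1^2\le C_3^2$ are in hand (Lemma \ref{L_th2}), one fixes $T_1=128\,C_3^4$ and observes by pigeonhole that on any window of length $T_1/2$ there exists a time $t_0'$ with $\|\theta_x(t_0')\|\le (8C_3)^{-1}$, whence $\|\theta(t_0')-1\|_{L^\infty}\le 1/2$ and $\theta(t_0',x)\ge 1/2$. One then restarts the local-in-time lower bound from $t_0'$, so that on $[t_0',t_0'+T_1]$ one has $\theta\ge (C_4T_1+2)^{-1}=:C_7$, a constant depending only on $\Pi_0,V_0,H(V_0)$. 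Iterating this window-by-window gives a \emph{uniform} lower bound $\theta\ge C_8:=\min\{C_5,C_7\}$ for all time. The smallness conditions on $\alpha$ are imposed at only finitely many stages (the paper tracks four thresholds $\alpha_1,\dots,\alpha_4$), so $\epsilon_0$ is fixed once and for all. Without this renewal step your scheme would require $\epsilon_0$ to shrink with $T$ and would not yield a global result.
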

	\begin{remark}
		We deduce from \eqref{thm1a}--\eqref{thm1b} and \eqref{thm3} that no
		vacuum will be developed if
		the initial data
		do not contain a vacuum.
		It follows from \eqref{thm2a}--\eqref{thm2b} and Sobolev's imbedding theorem
		that the unique solution constructed in Theorem \ref{thm}
		is a globally smooth non-vacuum solution with large initial data.
		Moreover, this result in Lagrangian coordinates
		can easily be converted to equivalent statement
		for the corresponding problem in Eulerian coordinates.
	\end{remark}
	\begin{remark} As far as we are aware,
		for ideal polytropic gases with general adiabatic exponent $\gamma$,
		Theorem \ref{thm} is the first result on
		the global well-posedness of smooth non-vacuum solutions
		to the compressible Navier--Stokes equations \eqref{NS_L}--\eqref{ideal}
		with temperature-dependent viscosity and large initial data.
	\end{remark}
	\begin{remark}
		The assumption we imposed on the parameters $\ell_1$ and
		$\ell_2$
		in Theorem \ref{thm} is just for illustrating our main idea
		to deduce the desired result and is far from being optimal.
		In fact, our approach can be applied
		to prove a similar global solvability result
		when the parameters $\ell_1$ and $\ell_2$ satisfy
		\begin{equation*} 
			\ell_1>0,\quad \ell_2>0\quad {\rm and}\quad
			\tfrac{18}{1+2\ell_1}\max\left\{0,1-\ell_1\right\}
			+\tfrac{19}{2\ell_2}\max\left\{0,1-\ell_2\right\}
			<4.
		\end{equation*}
		Unfortunately, our result cannot cover the model satisfying
		\eqref{transport0} since the parameters $\ell_1$ and
		$\ell_2$ are assumed to be positive.
		The extension of our result to the case with
		$\ell_1=\ell_2=0$ is an open
		problem for future research.
		
	\end{remark}
	
	\begin{remark}
		The existence of global strong solutions to the one-dimensional compressible Navier--Stokes equations for isentropic flows has been established in  \cite{MV08MR2368905}  with  the viscosity $\mu$ given by
		\eqref{transport}--\eqref{h}  for $\ell_1=0$, $0\leq \ell_2<\tfrac{1}{2}$, and $\alpha=0$,	and also  in  \cite{Haspot1411.5503} for the shallow water system, where the viscosity $\mu$ satisfies \eqref{transport} with $h(v)=v^{-1}$ and $\alpha=0$.
		Note that
		our derivation of the uniform bounds on $v(t,x)$ and $\theta(t,x)$
		relies heavily on the assumption that the initial data is sufficiently smooth.
		It is an interesting and difficult problem
		to extend the results in   \cite{Haspot1411.5503} and \cite{MV08MR2368905}
		to the non-isentropic case with transport coefficients satisfying \eqref{transport} for nonzero $\alpha$.
	\end{remark}
	Now we outline the main ideas to deduce our main result
	Theorem \ref{thm}. As pointed out before, the key point for
	the global solvability result with large data is to deduce the
	desired positive lower and upper bounds on the specific volume
	$v(t,x)$ and the temperature $\theta(t,x)$ uniformly in space $x$
	as in  \cite{K82MR651877,KS77MR0468593}
	and \cite{TYZZMR3032988}.
	Since we are trying to use the smallness of $|\alpha|$ to
	control the possible growth of the solutions caused by the last term
	in \eqref{id_TDV}, the amplitude of $|\alpha|$ should be determined
	by  pointwise bounds for the specific volume $v(t,x)$
	and the temperature $\theta(t,x)$.
	The main point in our analysis is to determine the positive
	parameter $\epsilon_0$
	(namely, the upper bound of $|\alpha|$)
	in Theorem \ref{thm}
	in terms of the initial data,
	such that the whole analysis can be carried out for
	$|\alpha|\leq \epsilon_0$.
	To guarantee the existence of such an $\epsilon_0$
	(i.e. to insure that the parameter
	$\alpha$ does not vanish) when we extend the
	local solutions step by step to the global ones,
	we have to obtain the
	lower and upper bounds for $v(t,x)$ and $\theta(t,x)$
	uniformly in time $t$ and space $x$.
	It is worth noting that,
	even for the Cauchy problem \eqref{NS_L}--\eqref{far} with constant
	transport coefficients, such uniform bounds
	on $\theta(t,x)$ are obtained
	only very recently by  Li and Liang\cite{LL},
	although the corresponding global solvability
	result was addressed by Kazhikhov\cite{K82MR651877}
	a long time ago.
	The starting points of the argument in  \cite{LL}
	are the following:
	\begin{list}{\setlength{\parsep}{\parskip}
			\setlength{\itemsep}{0.1em}
			\setlength{\labelwidth}{1.5em}
			\setlength{\labelsep}{0.4em}
			\setlength{\leftmargin}{2em}
			\setlength{\topsep}{1mm}
		}
		\item [(i)] the global  existence result obtained
		in  \cite{K82MR651877};
		\item [(ii)] the uniform positive
		lower and upper bounds on $v(t,x)$ obtained in
		 \cite{J99MR1671920} and \cite{J02MR1912419}  by using a decent	localized version of the expression for $v(t,x)$.
	\end{list}
	Based on these two points, Li and Liang further
	deduce the uniform positive lower and upper bounds
	on the temperature $\theta(t,x)$ in  \cite{LL}
	through a time-asymptotically nonlinear stability analysis.
	However, the approach in  \cite{J99MR1671920} and \cite{J02MR1912419}
	cannot be applied to the case
	when the viscosity $\mu$ is a  non-constant
	function of $v$ and $\theta$.
	To overcome such a difficulty,
	we employ the argument developed by Kanel$'$
	\cite{Ka68MR0227619,LYZZMR3225502} to prove that
	the specific volume $v(t,x)$ can be bounded
	in terms of the upper bound of the temperature $\theta(t,x)$.
	Then we combine the local-in-time lower bound on the
	temperature $\theta(t,x)$ induced by the maximum principle
	and a well-designed continuation argument
	to obtain the positive lower and upper bounds of
	the temperature $\theta(t,x)$ uniformly in time and space
	as well as the global existence of smooth solutions.
	Such a continuation argument is of some interest itself
	and can be used to study some other
	problems, such as nonlinear stability
	of the non-degenerate stationary solutions to
	the outflow problem of the compressible Navier--Stokes equations
	\eqref{NS_L}--\eqref{ideal} with large initial perturbation
	and general adiabatic exponent $\gamma$ in  \cite{WWZ15}.

	Before concluding this section, let us point out that
	our result shows that
	no vacuum, mass or heat concentration will be developed in any
	finite time, although the motion of the flow has large oscillations.
	For the corresponding results on the compressible
	Navier--Stokes equations
	with large data and vacuum,
	we refer to  \cite{BD07MR2297248,Fe07MR2323704,LJX08MR2410901,LXY98MR1485360,YZ02CMPMR1936794},
	and the references therein.

	The layout of the rest of this manuscript is organized as follows.
	In subsection \ref{sec_v},
	we deduce the estimate for $\left\|\frac{\mu v_x}{v}(t)\right\|$
	under some a priori assumptions as in Lemma \ref{L_v1},
	and by applying the argument developed by Kanel$'$,
	we prove in Lemma \ref{L_v2} that the bounds of the specific volume $v(t,x)$ can be controlled
	in terms of the upper bound of the temperature $\theta(t,x)$.
	In subsection \ref{sec_theta}, we estimate the $H^1(\mathbb{R})$-norm
	of the temperature $\theta(t,x)$ and obtain the upper and lower
	bound on the temperature $\theta(t,x)$.
	The estimates on second-order and third-order
	derivatives of the solution $(v(t,x), u(t,x), \theta(t,x))$ will be deduced in
	subsections \ref{sec_2} and \ref{sec_3}, respectively.
	Finally, in Sec. \ref{sec_proof},
	by combining the  a priori estimates
	and a well-designed continuation argument,
	we derive the positive lower and upper bounds of
	the temperature $\theta(t,x)$ and the specific volume $v(t,x)$ uniformly in time and space
	and extend the local solution step by step to the global one.
	
	\vspace*{2mm}
	\noindent\emph{Notations.}
	Throughout this paper,
	$L^q(\mathbb{R} )$ $(1\leq q\leq \infty)$ stands for
	the usual Lebesgue space on $\mathbb{R} $ with norm $\|{\cdot}\|_{L^q}$
	and $H^k(\mathbb{R})$  $(k\in \mathbb{N})$
	the usual Sobolev space in the $L^2$ sense
	with norm $\|\cdot\|_k$.
	We introduce $\|\cdot\|=\|\cdot\|_{L^2(\mathbb{R})}$ for
	notational simplicity.
	We denote by $C(I; H^p)$ the space of  continuous
	functions on the interval $I$ with values in
	$H^p(\mathbb{R})$ and $L^2(I; H^p)$
	the space of $L^2$-functions
	on $I$ with values in $H^p(\mathbb{R})$.
	We introduce
	$A\lesssim B$ (or $B\gtrsim A$)
	if $A\leq C B$ holds uniformly for some constant $C$
	depending solely on $\Pi_0$,
	$V_0$, and
	$H(V_0)$, where
	$\Pi_0$,
	$V_0$, and
	$H$ are given by \eqref{H} and \eqref{thm1a}--\eqref{thm1b}.
	
	\section{A Priori Estimates}
	We define, for constants $N$, $m_i$, $s$, and $t$ ($i=1,2$, $t\geq s$),
	the set
	\begin{equation*}
		\begin{split}
			&X(s,t ;m_1,m_2,N):=\big\{(v, u,\theta):
			(v-1,u,\theta-1)\in C([s,t];H^3),\\
			&\qquad\qquad v_x\in L^2(s,t;H^2),\
			(u_x,\theta_x)\in L^2(s,t;H^3),\\
			&\qquad\qquad 
			\mathcal{E}(s,t)\leq N^2,\ v(\tau,x)\geq m_1,\
			\theta(\tau,x)\geq m_2\ \forall\,  
			(\tau,x)\in[s,t ]\times\mathbb{R}
			\big\},
		\end{split}
	\end{equation*}
	where
	\begin{equation*}
		\mathcal{E}(s,t):=\sup_{\tau\in[s,t]}\|(v-1,u,\theta-1)(\tau)\|_3^2
		+\int_s^t\left[\|v_x(\tau)\|_2^2+\|(u_x,\theta_x)(\tau)\|_3^2\right]
		\mathrm{d}\tau.
	\end{equation*}
	The main purpose of this section
	is to derive certain a priori estimates on
	the solution $(v,u, \theta)\in X(0,T;m_1,m_2,N)$
	to the Cauchy problem
	\eqref{NS_L}--\eqref{far}
	with constitutive relations  \eqref{transport}
	and \eqref{h}
	for
	$T>0$ and
	$0<m_i\leq 1\leq N<+\infty$ $(i=1,2)$.
	It follows from the Sobolev's inequality that
	\begin{equation}\label{apriori1}
		m_1\leq v(t,x)\leq 4 N, \quad
		m_2\leq \theta(t,x)\leq 4 N\quad
		{\rm for\ all}\ (t,x)\in[0,T]\times\mathbb{R}.
	\end{equation}
	
	To make the presentation clearly, we
	divide this section into the following four parts, where
	we use
	$\lrn\cdot\rrn:=\|\cdot\|_{L^\infty([0,T]\times\mathbb{R})}$
	for notational simplicity.
	\subsection{Pointwise bounds on specific volume}\label{sec_v}
	In this part, we will deduce the lower and upper bounds on the specific volume $v(t,x)$ in terms of $\lrn\theta\rrn$.
	To this end, we first have the basic energy estimate.
	\begin{lemma}
		\label{L_bas}
		Assume that the conditions listed
		in Theorem \ref{thm} hold. Then
		\begin{equation}\label{E_basic}
			\sup_{t\in[0,T]}
			\int_{\mathbb{R}}\eta(v,u,\theta)(t,x)\mathrm{d}x
			+\int_0^T\int_{\mathbb{R}}\left[\frac{\mu u_x^2}{v\theta}
			+\frac{\kappa\theta_x^2}{v\theta^2}\right]
			\lesssim 1,
		\end{equation}
		where
		\begin{align}\label{eta}
			\eta(v,u,\theta)&:=\phi(v)+\tfrac12u^2+c_v\phi(\theta),
			\\ \label{phi}
			\phi(z)&:=z-\ln z-1.
		\end{align}
	\end{lemma}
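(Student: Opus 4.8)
The plan is to derive the entropy-type inequality by multiplying the equations in \eqref{NS_L} by appropriate thermodynamic weights and integrating over $\mathbb{R}$. First I would work with the convex entropy functional $\eta(v,u,\theta)=\phi(v)+\tfrac12 u^2+c_v\phi(\theta)$ where $\phi(z)=z-\ln z-1\geq 0$. The key identity is that, for smooth solutions with the far-field data \eqref{far}, $\eta$ satisfies a local balance law of the form
\begin{equation*}
	\partial_t\,\eta(v,u,\theta)+\partial_x(\text{flux})
	=-\frac{\mu u_x^2}{v\theta}-\frac{\kappa\theta_x^2}{v\theta^2}.
\end{equation*}
To obtain this, I would compute $\partial_t\phi(v)=(1-1/v)v_t=(1-1/v)u_x$ using $v_t=u_x$; multiply the momentum equation $u_t+P_x=(\mu u_x/v)_x$ by $u$; and handle the temperature term by writing the internal-energy equation (which follows from \eqref{NS_L} and \eqref{ideal}) as $c_v\theta_t+\frac{\theta}{v}u_x=\frac{\mu u_x^2}{v}+(\kappa\theta_x/v)_x$, then multiplying by $(1-1/\theta)$ so that $c_v\partial_t\phi(\theta)=c_v(1-1/\theta)\theta_t$. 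Adding the three resulting identities, the pressure terms $P u_x=\frac{\theta}{v}u_x$ cancel against each other (the $u\cdot P_x$ contribution, the $\frac{\theta}{v}u_x$ contribution, and the $-\frac1v\cdot\frac{\theta}{v}u_x$ contribution combine, using $P=\theta/v$), and the cross terms involving $u_x$ from the viscosity also cancel in the appropriate combination; what remains on the right is exactly the dissipation $-\frac{\mu u_x^2}{v\theta}-\frac{\kappa\theta_x^2}{v\theta^2}$, both terms being nonpositive since $\mu,\kappa>0$ by \eqref{transport}--\eqref{h}.

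Next I would integrate this balance law in $x$ over $\mathbb{R}$. The spatial flux integrates to zero because $(v-1,u,\theta-1)\to 0$ at $x=\pm\infty$ by \eqref{far} (and, in the a priori framework, because the solution lies in $C([0,T];H^3)$, so all boundary contributions vanish). Integrating the resulting ODE inequality in $t$ from $0$ to any $t\in[0,T]$ gives
\begin{equation*}
	\int_{\mathbb{R}}\eta(v,u,\theta)(t,x)\,\mathrm{d}x
	+\int_0^t\!\!\int_{\mathbb{R}}\left[\frac{\mu u_x^2}{v\theta}
	+\frac{\kappa\theta_x^2}{v\theta^2}\right]\mathrm{d}x\,\mathrm{d}\tau
	=\int_{\mathbb{R}}\eta(v_0,u_0,\theta_0)(x)\,\mathrm{d}x.
\end{equation*}
It remains to bound the right-hand side by a constant depending only on $\Pi_0$, $V_0$, $H(V_0)$. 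Since $\phi$ is smooth on $(0,\infty)$ with $\phi(1)=\phi'(1)=0$, a Taylor expansion gives $\phi(z)\leq C(V_0)|z-1|^2$ whenever $V_0\leq z\leq V_0^{-1}$; combined with \eqref{thm1b} (which gives $V_0\leq v_0\leq V_0^{-1}$ and $\theta_0\geq V_0$) and \eqref{thm1a} ($\|(v_0-1,u_0,\theta_0-1)\|_{H^3}\leq\Pi_0$, hence in particular the $L^\infty$ bounds $\|v_0-1\|_{L^\infty},\|\theta_0-1\|_{L^\infty}\lesssim\Pi_0$ so that $\theta_0$ is also bounded above), one controls $\phi(v_0)+\tfrac12u_0^2+c_v\phi(\theta_0)\lesssim |v_0-1|^2+u_0^2+|\theta_0-1|^2$ pointwise, and integrating over $\mathbb{R}$ yields $\int_{\mathbb{R}}\eta(v_0,u_0,\theta_0)\,\mathrm{d}x\lesssim\|(v_0-1,u_0,\theta_0-1)\|^2\leq\Pi_0^2\lesssim 1$. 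Taking the supremum over $t\in[0,T]$ gives \eqref{E_basic}.

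The only mildly delicate point is the algebraic cancellation of the pressure and viscous-work terms when summing the three multiplied equations — this is the standard computation underlying the physical entropy inequality, and one must be careful to use $e=c_v\theta$, $P=\theta/v$, and the exact form of the energy equation in \eqref{NS_L} to see that no uncontrolled term survives. I expect no real obstacle here: the structure is the classical Kazhikhov energy estimate, and the temperature-dependence of $\mu,\kappa$ in \eqref{transport} does not interfere with the derivation of \eqref{E_basic} because $\mu$ and $\kappa$ enter only through the manifestly nonpositive dissipation terms, which are simply discarded (kept on the good side).
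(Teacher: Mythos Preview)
Your proposal is correct and follows essentially the same route as the paper: multiply \eqref{NS_L}$_1$, \eqref{NS_L}$_2$, and the internal-energy equation \eqref{id_theta} by $(1-v^{-1})$, $u$, and $(1-\theta^{-1})$ respectively, add, integrate, and bound the initial entropy using the quadratic behavior of $\phi$ near $1$. The only cosmetic difference is that the paper writes the bound on $\phi(v_0),\phi(\theta_0)$ via the integral remainder $\phi(z)\lesssim (z^{-1}+1)^2(z-1)^2$ rather than a Taylor expansion on $[V_0,V_0^{-1}]$, but both arguments give the same control in terms of $\Pi_0$ and $V_0$.
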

	\begin{proof}
		In light of \eqref{NS_L}, we deduce
		\begin{equation}\label{id_theta}
			c_v{\theta_t}+\frac{\theta u_x}{v}=
			\left[\frac{\kappa\theta_x}{v}\right]_x+\frac{\mu u^2_x}{v}.
		\end{equation}
		Multiplying \eqref{NS_L}$_1$
		(the first equation of \eqref{NS_L}), \eqref{NS_L}$_2$,
		and \eqref{id_theta} by $(1-v^{-1})$, $u$, and $(1-\theta^{-1})$,
		respectively,   we find
		\begin{equation*}
			\eta(v,u,\theta)_t+\frac{\mu u_x^2}{v\theta}
			+\frac{\kappa\theta_x^2}{v\theta^2}
			=\left[\frac{\mu uu_x}{v}+\left(1-\frac{1}{\theta}\right)
			\frac{\kappa\theta_x}{v}
			+\left(1-\frac{\theta}{v}\right)u\right]_x.
		\end{equation*}
		Integrate the above identity over $[0,T]\times\mathbb{R}$
		to have
		\begin{equation} \label{E_basic1}
			\int_{\mathbb{R}}\eta(v,u,\theta)(t,x)\mathrm{d}x
			+\int_0^t\int_{\mathbb{R}}\left[\frac{\mu u_x^2}{v\theta}
			+\frac{\kappa\theta_x^2}{v\theta^2}\right]
			=\int_{\mathbb{R}}\eta(v_0,u_0,\theta_0)(x)\mathrm{d}x.
		\end{equation}
		It follows from the identity
		$
		\phi(z)=\int_0^1\int_0^1\theta_1\phi''(1+\theta_1\theta_2(z-1))
		\mathrm{d}\theta_2\mathrm{d}\theta_1
		(z-1)^2
		$
		that
		\begin{equation} \label{E_phi}
			(z+1)^{-2}(z-1)^2\lesssim\phi(z)\lesssim (z^{-1}+1)^2(z-1)^2.
		\end{equation}
		Applying the last inequality to $\phi(v_0)$ and $\phi(\theta_0)$,
		we obtain
		\begin{equation*}
			\eta(v_0,u_0,\theta_0)(x)\lesssim 1.
		\end{equation*}
		Plug this last inequality into \eqref{E_basic1}
		to derive \eqref{E_basic}.
		The proof of this lemma is completed.
	\end{proof}
	Our analysis will rely on
	the following lemma.
	\begin{lemma}
		\label{L_v1}
		Suppose that the conditions listed
		in Theorem \ref{thm} hold.
		Then there is a constant $0<\epsilon_1\leq 1$,
		depending only on $\Pi_0$,
		$V_0$, and
		$H(V_0)$,
		such that if
		\begin{gather}\label{apriori3}
			m_2^{-|\alpha|}\leq 2,\quad N^{|\alpha|}\leq 2,\quad
			\Xi(m_1,m_2,N)|\alpha|\leq \epsilon_1,
		\end{gather}
		where $$
		\Xi(m_1,m_2,N):=
		\left[m_1^{-1}+m_2^{-1}+N+
		\sup_{m_1\leq \sigma \leq 4N}h(\sigma)+1\right]^{80},$$
		then
		\begin{equation}\label{E_vx}
			\sup_{t\in[0,T]}\left\|\frac{\mu v_x}{v}(t)\right\|^2
			+\int_0^T\int_{\mathbb{R}}\frac{\mu \theta v_x^2}{v^3}
			\lesssim 1+\lrn\theta\rrn.
		\end{equation}
	\end{lemma}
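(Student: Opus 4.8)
The plan is to start from the identity \eqref{id_TDV}. For the viscosity \eqref{transport} one has $\mu_\theta=\alpha\mu/\theta$, so, writing $\Psi:=\frac{\mu v_x}{v}-u$, the identity \eqref{id_TDV} becomes
\begin{equation*}
\Psi_t=P_x+\frac{\alpha\mu}{v\theta}\left(\theta_t v_x-u_x\theta_x\right).
\end{equation*}
I would multiply this by $\Psi$ and integrate over $\mathbb{R}$. Since $P=\theta/v$, the term $\int_{\mathbb{R}}P_x\Psi$ splits into the dissipative contribution $-\int_{\mathbb{R}}\frac{\mu\theta v_x^2}{v^3}$, the cross term $\int_{\mathbb{R}}\frac{\mu v_x\theta_x}{v^2}$, and $\int_{\mathbb{R}}\frac{\theta u_x}{v}$; the last I would rewrite, using \eqref{NS_L}$_2$ tested against $u$, as $\tfrac12\frac{d}{dt}\|u\|^2+\int_{\mathbb{R}}\frac{\mu u_x^2}{v}$. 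This produces a differential identity whose left-hand side controls $\frac{d}{dt}(\|\Psi\|^2-\|u\|^2)$ together with $\int_{\mathbb{R}}\frac{\mu\theta v_x^2}{v^3}$, i.e. exactly the two quantities in \eqref{E_vx}, once one notes $\|\Psi\|^2\ge\tfrac12\|\tfrac{\mu v_x}{v}\|^2-\|u\|^2$ and that $\|u(t)\|^2\lesssim1$ by Lemma~\ref{L_bas}.

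Next I would integrate in time. The initial-data terms are $\lesssim1$ by \eqref{thm1a}--\eqref{thm1b}, \eqref{h} and \eqref{H}. The cross term $\int_0^t\!\int_{\mathbb{R}}\frac{\mu v_x\theta_x}{v^2}$ is handled by Cauchy--Schwarz: a small fraction of $\int_0^t\!\int_{\mathbb{R}}\frac{\mu\theta v_x^2}{v^3}$ is absorbed on the left, leaving $\int_0^t\!\int_{\mathbb{R}}\frac{\mu\theta_x^2}{v\theta}$; both this and $\int_0^t\!\int_{\mathbb{R}}\frac{\mu u_x^2}{v}$ are $\lesssim\lrn\theta\rrn$ after extracting one power of $\theta$ and invoking the dissipation integrals in \eqref{E_basic}, using that $\mu/\kappa$ is a constant and — via \eqref{apriori3} — that $\theta^{\pm\alpha}$ stays comparable to $1$ on $[0,T]\times\mathbb{R}$. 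This yields the $1+\lrn\theta\rrn$ on the right-hand side of \eqref{E_vx}.

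The hard part will be the remaining, highly nonlinear term $\alpha\int_0^t\!\int_{\mathbb{R}}\frac{\mu\Psi}{v\theta}(\theta_t v_x-u_x\theta_x)$, which is where the smallness of $|\alpha|$ must be spent. Expanding $\Psi=\frac{\mu v_x}{v}-u$ produces four pieces, built from $\theta_t v_x^2$, $\theta_t u v_x$, $u_x\theta_x v_x$, and $u u_x\theta_x$. Each carries the explicit factor $\alpha$, and I would bound each by $|\alpha|$ times a polynomial in $m_1^{-1}$, $m_2^{-1}$, $N$ and $\sup_{m_1\le\sigma\le4N}h(\sigma)$, using: the pointwise bounds \eqref{apriori1}; the a priori bound $\mathcal{E}(0,T)\le N^2$, which gives $\lrn v_x\rrn,\lrn u_x\rrn,\lrn\theta_x\rrn\lesssim N$ and $\int_0^T(\|v_x\|^2+\|u_x\|^2+\|\theta_x\|^2)\lesssim N^2$; and — crucially for the $\theta_t$ pieces — the estimate $\int_0^T\|\theta_t\|^2\lesssim\Xi(m_1,m_2,N)$, which follows from \eqref{id_theta}, $\mathcal{E}(0,T)\le N^2$, and the bound $h'(v)^2v\le Ch(v)^3$ from \eqref{h} (needed to control $|h'(v)|$ by $h(v)$). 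Since $\Xi$ is a sufficiently large power (the exponent $80$ is deliberately generous), each of these four pieces is $\le\epsilon_1$ once $\Xi(m_1,m_2,N)|\alpha|\le\epsilon_1$ with $\epsilon_1>0$ small enough, depending only on $\Pi_0$, $V_0$, $H(V_0)$. Collecting everything and taking the supremum over $t\in[0,T]$ then gives \eqref{E_vx}. The essential point — and the heart of the lemma — is that the dangerous $\theta_t v_x$ contribution of \eqref{id_TDV} is multiplied by $\alpha$, so that its size is governed by the current pointwise bounds $m_1,m_2,N$; verifying that the resulting powers of these quantities do not exceed the exponent built into $\Xi$ is tedious but routine.
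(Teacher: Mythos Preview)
Your approach is correct and shares the same Kanel$'$-type energy identity with the paper: both start from \eqref{id_TDV}, and multiplying by $\Psi=\tfrac{\mu v_x}{v}-u$ is algebraically equivalent to what the paper does (multiply \eqref{id1} by $\tfrac{\mu v_x}{v}$ and then rewrite $u_t\tfrac{\mu v_x}{v}$ so that the nonlinear remainder becomes $\tfrac{\mu_\theta}{v^2}(\mu v_x-uv)(v_x\theta_t-\theta_xu_x)=\tfrac{\mu_\theta}{v}\Psi(v_x\theta_t-\theta_xu_x)$). The treatment of the $P_x\Psi$ piece and the cross term $\tfrac{\mu v_x\theta_x}{v^2}$ is also the same.

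The genuine difference is in the nonlinear $\alpha$-term. You bound it in one stroke using the a priori hypothesis $\mathcal{E}(0,T)\le N^2$ (which gives $\int_0^T\|\theta_t\|^2$, $\int_0^T\|v_x\|^2$, and pointwise control of first derivatives), so every piece is simply $\le |\alpha|\cdot\mathrm{poly}(m_1^{-1},m_2^{-1},N,\sup h)\le |\alpha|\,\Xi\le\epsilon_1$. The paper instead substitutes the expression \eqref{id_theta1} for $\theta_t$, splits the result into $\mathcal{R}_1,\mathcal{R}_2$, absorbs one contribution proportional to $\int\tfrac{\mu\theta v_x^2}{v^3}$ back into the left side, and needs an auxiliary energy estimate \eqref{pro2.2a} on $\int\tfrac{\kappa\theta_{xx}^2}{v}$ (obtained by testing \eqref{id_theta} with $\theta_{xx}$). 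Your route is shorter and exploits the $H^3$ framework more fully; the paper's route is more self-contained in that the intermediate inequality \eqref{E_vx6} refers only to lower-order dissipation integrals before the final appeal to \eqref{E_basic}. Either way the conclusion \eqref{E_vx} follows, and the exponent $80$ in $\Xi$ is comfortably sufficient for the polynomial bookkeeping you describe.
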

	\begin{proof}
		According to the chain rule, we have
		\begin{equation*}
			\begin{aligned}
				\left(\frac{\mu v_x}{v}\right)_t
				&=\mu\left(\frac{v_t}{v}\right)_x
				+\frac{v_x}{v}(\mu_v v_t+\mu_{\theta}\theta_t)\\
				&=\left(\frac{\mu v_t}{v}\right)_x
				-\frac{v_t}{v}\mu_x
				+\frac{v_x}{v}(\mu_v v_t+\mu_{\theta}\theta_t)\\
				&=\left(\frac{\mu v_t}{v}\right)_x
				+\frac{\mu_{\theta}}{v}(v_x \theta_t-\theta_x v_t),
			\end{aligned}
		\end{equation*}
		which combined with \eqref{NS_L} implies
		\begin{equation}\label{id1}
			\left(\frac{\mu v_x}{v}\right)_t
			=u_t+\left(\frac{\theta}{v}\right)_x
			+\frac{\mu_{\theta}}{v}(v_x \theta_t-\theta_x u_x).
		\end{equation}
		Multiply \eqref{id1} by ${\mu v_x}/{v}$ to deduce
		\begin{equation*}
			\begin{aligned}
				&\left[\frac12\left(\frac{\mu v_x}{v}\right)^2\right]_t
				+\frac{\mu\theta v_x^2}{v^3}+\left(\frac{\mu uu_x}{v}\right)_x
				-\left(\frac{\mu v_x}{v}u\right)_t\\
				&\qquad
				=\frac{\mu u_x^2}{v}
				+\frac{\mu v_x\theta_x}{v^2}
				+\frac{\mu_{\theta}}{v^2}(\mu v_x-uv)(v_x \theta_t-\theta_x u_x),
			\end{aligned}
		\end{equation*}
		and hence
		\begin{equation*}
			\begin{aligned}
				&\left\|\frac{\mu v_x}{v}(t)\right\|^2
				+\int_0^t\int_{\mathbb{R}}\frac{\mu\theta v_x^2}{v^3}
				\lesssim 1+\left|\int_{\mathbb{R}}\frac{\mu v_x}{v}u
				\mathrm{d}x\right|
				+\int_0^t\int_{\mathbb{R}}\frac{\mu u_x^2}{v} \\
				&\qquad
				+\left|\int_0^t\int_{\mathbb{R}}\frac{\mu v_x\theta_x}{v^2}
				\right|
				+\left|\int_0^t\int_{\mathbb{R}}
				\frac{\mu_{\theta}}{v^2}(\mu v_x-uv)(v_x \theta_t-\theta_x u_x)
				\right|.
			\end{aligned}
		\end{equation*}
		We deduce from Cauchy's inequality and \eqref{E_basic} that
		\begin{align} \notag
			&\left\|\frac{\mu v_x}{v}(t)\right\|^2
			+\int_0^t\int_{\mathbb{R}}\frac{\mu\theta v_x^2}{v^3}
			\lesssim 1+\int_0^t\int_{\mathbb{R}}\frac{\mu u_x^2}{v}\\
			&\qquad  \label{E_vx1}
			+\int_0^t\int_{\mathbb{R}}\frac{\mu \theta_x^2}{v\theta}
			+\left|\int_0^t\int_{\mathbb{R}}
			\frac{\mu_{\theta}}{v^2}(\mu v_x-uv)(v_x \theta_t-\theta_x u_x)\right|.
		\end{align}
		We first estimate the last term in \eqref{E_vx1}.
		It follows from \eqref{id_theta} that
		\begin{equation} \label{id_theta1}
			\theta_t=\frac{1}{c_v}\left[\frac{\kappa_v\theta_x}{v}
			-\frac{\kappa\theta_x}{v^2}\right]v_x
			+\frac{1}{c_v}\left[\frac{\kappa_{\theta}\theta_x^2}{v}
			+\frac{\kappa\theta_{xx}}{v}+\frac{\mu u_x^2}{v}
			-\frac{\theta u_x}{v}\right],
		\end{equation}
		which yields
		\begin{equation}\label{id2}
			\begin{aligned}
				(\mu v_x-uv)(v_x \theta_t-\theta_x u_x)
				=uv\theta_x u_x+\mathcal{R}_1v_x+\mathcal{R}_2v_x^2
			\end{aligned}
		\end{equation}
		with
		\begin{align*}
			\mathcal{R}_1&:=-\mu\theta_x u_x
			-\frac{u}{c_v}\left(\kappa_{\theta}\theta_x^2+\kappa\theta_{xx}
			+\mu u_x^2-\theta u_x\right),
			\\ 
			\mathcal{R}_2&:=\mu\theta_t-\frac{u}{c_v}\left[
			\kappa_v\theta_x-\frac{\kappa\theta_x}{v}\right].
		\end{align*}
		Plug \eqref{id2} into \eqref{E_vx1} to obtain
		\begin{align} \notag
			&\left\|\frac{\mu v_x}{v}(t)\right\|^2
			+\int_0^t\int_{\mathbb{R}}\frac{\mu\theta v_x^2}{v^3}
			\lesssim 1+\int_0^t\int_{\mathbb{R}}\frac{\mu u_x^2}{v}
			+\int_0^t\int_{\mathbb{R}}\frac{\mu \theta_x^2}{v\theta} \\
			\label{E_vx2}
			&\qquad
			+\left|\int_0^t\int_{\mathbb{R}}\frac{\mu_{\theta}}{v}u\theta_x u_x \right|
			+\left|\int_0^t\int_{\mathbb{R}}\frac{\mu_{\theta}}{v^2}v_x
			\mathcal{R}_1  \right|
			+\left|\int_0^t\int_{\mathbb{R}}\frac{\mu_{\theta}}{v^2}v_x^2
			\mathcal{R}_2 \right|.
		\end{align}
		Next we estimate the terms on the right-hand side of \eqref{E_vx2}.
		In view of \eqref{E_basic}, we have
		\begin{equation*}
			\left|\int_0^t\int_{\mathbb{R}}\frac{\mu_{\theta}}{v}u\theta_x u_x \right|
			\leq \lrn\frac{\mu_{\theta}\sqrt{\theta}\theta}{\sqrt{\kappa\mu}}u\rrn
			\int_0^t\left\|\frac{\sqrt{\kappa}\theta_x}{\sqrt{v}\theta}\right\|
			\left\|\frac{\sqrt{\mu}u_x}{\sqrt{v\theta}}\right\|
			\lesssim \lrn\frac{\mu_{\theta}\sqrt{\theta}\theta}{\sqrt{\kappa\mu}}u\rrn.
		\end{equation*}
		We deduce from the identity $\mu_{\theta}=\alpha\mu/\theta$
		and \eqref{apriori1} that
		\begin{equation*}
			\lrn\frac{\mu_{\theta}\sqrt{\theta}\theta}{\sqrt{\kappa\mu}}u\rrn
			\lesssim\lrn \alpha\sqrt{\theta}u\rrn
			\lesssim |\alpha|  N^{\frac12}\sup_{t\in[0,T]}\|u(t)\|_1
			\lesssim |\alpha| N^{\frac32}.
		\end{equation*}
		Hence
		\begin{equation}\label{E_vx2a}
			\left|\int_0^t\int_{\mathbb{R}}\frac{\mu_{\theta}}{v}u\theta_x u_x\right|
			\lesssim |\alpha| N^{\frac32}.
		\end{equation}
		Apply Cauchy's inequality to get
		\begin{equation}\label{E_R1}
			\left|\int_0^t\int_{\mathbb{R}}\frac{\mu_{\theta}}{v^2}v_x\mathcal{R}_1 \right|
			\leq \epsilon\int_0^t\int_{\mathbb{R}}\frac{\mu\theta v_x^2}{v^3}
			+C(\epsilon)\int_0^t\int_{\mathbb{R}}\frac{\mu_{\theta}^2}{\mu
				\theta v}\mathcal{R}_1^2 .
		\end{equation}
		Since
		\begin{equation*}
			\mathcal{R}_1^2\lesssim
			\mu^2\theta_x^2u_x^2+u^2\kappa_{\theta}^2\theta_x^4
			+u^2\kappa^2\theta_{xx}^2+\mu^2u^2u_x^4+u^2\theta^2u_x^2,
		\end{equation*}
		we have from \eqref{E_basic} that
		\begin{align} \notag
			\int_0^t\int_{\mathbb{R}}\frac{\mu_{\theta}^2\mathcal{R}_1^2}
			{\mu \theta v}
			\lesssim~&
			\int_0^t\int_{\mathbb{R}}\frac{\kappa\theta_x^2}{v\theta^2}
			\frac{\mu_{\theta}^2\theta}{\mu \kappa}
			(\mu^2u_x^2+u^2\kappa_{\theta}^2\theta_x^2)
			+
			\int_0^t\int_{\mathbb{R}}\frac{\kappa\theta_{xx}^2}{v}
			\frac{\mu_{\theta}^2\kappa}{\mu \theta}u^2\\ \notag
			&+
			\int_0^t\int_{\mathbb{R}}\frac{\mu u_x^2}{v\theta}
			\frac{\mu_{\theta}^2}{\mu^2}(u^2\theta^2+\mu^2u^2u_x^2)\\
			\notag
			\lesssim~&
			\lrn\frac{\mu_{\theta}^2\theta}{\mu \kappa}
			(\mu^2u_x^2+u^2\kappa_{\theta}^2\theta_x^2)\rrn
			+\lrn\frac{\mu_{\theta}^2\kappa}{\mu \theta}u^2\rrn
			\int_0^t\int_{\mathbb{R}}\frac{\kappa\theta_{xx}^2}{v}\\
			\label{E_R1a}
			&+\lrn\frac{\mu_{\theta}^2}{\mu^2}(u^2\theta^2+\mu^2u^2u_x^2)\rrn.
		\end{align}
		The a priori assumption \eqref{apriori1} implies
		\begin{equation}\label{E_key1}
			\lrn \theta^{\alpha}+\theta^{-\alpha}\rrn\leq
			m_2^{-|\alpha|}+(4N)^{|\alpha|}.
		\end{equation}
		Then we have from \eqref{apriori3}, \eqref{E_key1} and
		Sobolev's inequality that
		\begin{align} \notag
			&\lrn\frac{\mu_{\theta}^2\theta}{\mu \kappa}
			\left(\mu^2u_x^2+u^2\kappa_{\theta}^2\theta_x^2\right)\rrn
			+\lrn\frac{\mu_{\theta}^2\kappa}{\mu \theta}u^2\rrn\\
			&\qquad \label{E_R1b}
			+\lrn\frac{\mu_{\theta}^2}{\mu^2}\left(u^2\theta^2+\mu^2u^2u_x^2\right)\rrn
			\lesssim \alpha^2
			\Xi(m_1,m_2,N)^{\frac{1}{8}}.
		\end{align}
		Combine the estimates \eqref{E_R1}--\eqref{E_R1b}
		to derive
		\begin{align} \notag
			&\left|\int_0^t\int_{\mathbb{R}}\frac{\mu_{\theta}}
			{v^2}v_x\mathcal{R}_1\right|\\
			&\qquad \label{E_R1c}
			\leq \epsilon\int_0^t\int_{\mathbb{R}}\frac{\mu\theta v_x^2}{v^3}
			+C(\epsilon)\alpha^2 \Xi(m_1,m_2,N)^{\frac{1}{8}}\left[1
			+\int_0^t\int_{\mathbb{R}}\frac{\kappa\theta_{xx}^2}{v}\right].
		\end{align}
		For the last term on the right-hand side of \eqref{E_vx2}, we have
		\begin{equation} \label{E_R2}
			\left|\int_0^t\int_{\mathbb{R}}\frac{\mu_{\theta}}{v^2}v_x^2\mathcal{R}_2\right|
			\leq \lrn\frac{\mu_{\theta} v}{\mu\theta}\mathcal{R}_2\rrn
			\int_0^t\int_{\mathbb{R}}\frac{\mu\theta v_x^2 }{v^3}.
		\end{equation}
		It follows from \eqref{apriori3} and \eqref{id_theta1} that
		\begin{equation}\label{pro2.1}
			|\theta_t|\leq \Xi(m_1,m_2,N)^{\frac{1}{8}}
			\left(u_x^2+\theta_x^2+|\theta_x v_x|+|\theta_{xx}|+|u_x|\right)
		\end{equation}
		and
		\begin{equation*}
			|\mathcal{R}_2|\leq
			\Xi(m_1,m_2,N)^{\frac{1}{8}}\left(u_x^2+\theta_x^2+|
			\theta_x v_x|+|\theta_{xx}|+|u_x|+|u\theta_x|\right).
		\end{equation*}
		Hence
		\begin{equation*}
			\lrn\frac{\mu_{\theta} v}{\mu\theta}\mathcal{R}_2\rrn
			\leq |\alpha|\Xi(m_1,m_2,N)^{\frac{1}{4}}.
		\end{equation*}
		We plug this last estimate into \eqref{E_R2} to find that
		\begin{equation} \label{E_R2a}
			\left|\int_0^t\int_{\mathbb{R}}\frac{\mu_{\theta}}{v^2}v_x^2\mathcal{R}_2\right|
			\leq |\alpha|\Xi(m_1,m_2,N)^{\frac{1}{4}}
			\int_0^t\int_{\mathbb{R}}\frac{\mu\theta v_x^2 }{v^3}.
		\end{equation}
		Plugging \eqref{E_vx2a}, \eqref{E_R1c}, and \eqref{E_R2a}
		into \eqref{E_vx2} and choosing $\epsilon>0$ sufficiently small,
		we derive
		\begin{align} \notag
			\left\|\frac{\mu v_x}{v}(t)\right\|^2
			+\int_0^t\int_{\mathbb{R}}\frac{\mu\theta v_x^2}{v^3}
			\lesssim\,&  1+\int_0^t\int_{\mathbb{R}}\frac{\mu u_x^2}{v}
			+\int_0^t\int_{\mathbb{R}}\frac{\mu \theta_x^2}{v\theta}\\
			&\! \notag
			+\alpha^2
			\Xi(m_1,m_2,N)^{\frac{1}{8}}\left[1
			+\int_0^t\int_{\mathbb{R}}\frac{\kappa\theta_{xx}^2}{v}\right]\\
			&\! \label{E_vx3}
			+|\alpha|\Xi(m_1,m_2,N)^{\frac{1}{4}}
			\int_0^t\int_{\mathbb{R}}\frac{\mu\theta v_x^2 }{v^3}.
		\end{align}
		If  the parameter  $\epsilon_1$ in \eqref{apriori3}
		is chosen to be  suitably small,
		then we obtain
		\begin{equation}\label{E_vx4}
			\left\|\frac{\mu v_x}{v}(t)\right\|^2
			+\int_0^t\int_{\mathbb{R}}\frac{\mu\theta v_x^2}{v^3}
			\lesssim 1+\int_0^t\int_{\mathbb{R}}\frac{\mu u_x^2}{v}
			+\int_0^t\int_{\mathbb{R}}\frac{\mu\theta_x^2}{v\theta}
			+|\alpha|\int_0^t\int_{\mathbb{R}}\frac{\kappa\theta_{xx}^2}{v }.
		\end{equation}
		We next estimate the last term in \eqref{E_vx4}.
		To this end, we multiply \eqref{id_theta1} by $\theta_{xx}$ to get
		\begin{equation*}
			\begin{split}
				&\left(\frac{c_v}{2}\theta_x^2\right)_t
				-\left(c_v\theta_x\theta_t\right)_x
				+\frac{\kappa\theta_{xx}^2}{v}\\
				&\qquad
				=\theta_{xx}\left[\frac{\theta u_x}{v}-\frac{\kappa_v v_x\theta_x}{v}
				+\frac{\kappa\theta_x v_x}{v^2}-\frac{\kappa_{\theta}\theta_x^2}{v}
				-\frac{\mu u_x^2}{v}\right].
			\end{split}
		\end{equation*}
		Integrating this last identity over $[0,t]\times\mathbb{R}$ and
		employing Cauchy's inequality give us
		\begin{align} \notag
			\|\theta_x(t)\|^2+\int_0^t\int_{\mathbb{R}}\frac{\kappa\theta_{xx}^2}{v}
			\lesssim&~1+\int_0^t\int_{\mathbb{R}}
			\left[\frac{\theta^2 u_x^2}{v\kappa}+\frac{\kappa_v^2 v_x^2\theta_x^2}{v\kappa}
			+\frac{\kappa\theta_x^2 v_x^2}{v^3}+\frac{\kappa_{\theta}^2\theta_x^4}{v\kappa}
			+\frac{\mu^2 u_x^4}{v\kappa}\right]\\ \notag
			\lesssim&~1+\lrn\frac{v\theta}{\mu}\left(\frac{\theta^2}{v\kappa}
			+\frac{\mu^2u_x^2}{v\kappa}\right)\rrn\int_0^t\int_{\mathbb{R}}\frac{\mu u_x^2}{v\theta}\\ \label{pro2.2}
			&+\lrn\frac{v\theta^2}{\kappa}\left(\frac{\kappa_v^2v_x^2}{v\kappa}
			+\frac{\kappa v_x^2}{v^3}+\frac{\kappa_{\theta}^2\theta_x^2}{v\kappa}\right)\rrn
			\int_0^t\int_{\mathbb{R}}\frac{\kappa \theta_x^2}{v\theta^2}.
		\end{align}
		It follows from  \eqref{apriori3} that
		\begin{equation*}
			\lrn\frac{v\theta}{\mu}\left(\frac{\theta^2}{v\kappa}
			+\frac{\mu^2u_x^2}{v\kappa}\right)\rrn
			+\lrn\frac{v\theta^2}{\kappa}\left(\frac{\kappa_v^2v_x^2}{v\kappa}
			+\frac{\kappa v_x^2}{v^3}+\frac{\kappa_{\theta}^2\theta_x^2}{v\kappa}\right)\rrn
			\lesssim \Xi(m_1,m_2,N)^{\frac{1}{8}}.
		\end{equation*}
		Insert the above inequality and \eqref{E_basic} into \eqref{pro2.2} to derive
		\begin{equation}\label{pro2.2a}
			\|\theta_x(t)\|^2+\int_0^t\int_{\mathbb{R}}\frac{\kappa\theta_{xx}^2}{v}
			\lesssim 1+\Xi(m_1,m_2,N)^{\frac{1}{8}}.
		\end{equation}
		Combining \eqref{E_vx4} and \eqref{pro2.2a},  we obtain
		\begin{align} \notag
			&\left\|\frac{\mu v_x}{v}(t)\right\|^2
			+\int_0^t\int_{\mathbb{R}}\frac{\mu\theta v_x^2}{v^3}\\
			&\qquad \label{E_vx5}
			\lesssim  1+\int_0^t\int_{\mathbb{R}}\frac{\mu u_x^2}{v}
			+\int_0^t\int_{\mathbb{R}}\frac{\mu\theta_x^2}{v\theta}
			+|\alpha|\Xi(m_1,m_2,N)^{\frac{1}{8}}.
		\end{align}
		Under the assumptions \eqref{apriori3},
		we take $\epsilon_1>0$ small enough to infer
		\begin{equation}\label{E_vx6}
			\left\|\frac{\mu v_x}{v}(t)\right\|^2
			+\int_0^t\int_{\mathbb{R}}\frac{\mu\theta v_x^2}{v^3}
			\lesssim 1+\int_0^t\int_{\mathbb{R}}\left[\frac{\mu u_x^2}{v}
			+\frac{\mu\theta_x^2}{v\theta}\right].
		\end{equation}
		The estimate \eqref{E_basic} implies
		\begin{equation}\label{E_vx6a}
			\begin{aligned}
				\int_0^t\int_{\mathbb{R}}\left[
				\frac{\mu u_x^2}{v}+\frac{\mu\theta_x^2}{v\theta}\right]
				\leq\lrn\theta\rrn\int_0^t\int_{\mathbb{R}}\frac{\mu u_x^2}{v\theta}
				+\lrn\frac{\mu \theta}{\kappa}\rrn\int_0^t\int_{\mathbb{R}}\frac{\kappa\theta_x^2}{v\theta^2}
				\lesssim \lrn\theta\rrn.
			\end{aligned}
		\end{equation}
		Plugging \eqref{E_vx6a} into \eqref{E_vx6} yields \eqref{E_vx}.
		This completes the proof of this lemma.
	\end{proof}
	In the next lemma, we apply the technique developed by Kanel$'$\cite{Ka68MR0227619,LYZZMR3225502}
	to estimate the upper and lower bounds
	for the specific volume $v(t,x)$ in terms of $\lrn\theta\rrn$.
	\begin{lemma}
		\label{L_v2}
		Assume that the conditions listed in Lemma \ref{L_v1} hold.
		Then
		\begin{equation}\label{bound_v00}
			\lrn v\rrn\lesssim 1+\lrn\theta\rrn^{\frac1{1+2{\ell_1}}},
			\qquad
			\lrn v^{-1}\rrn\lesssim 1+\lrn\theta\rrn^{\frac1{2{\ell_2}}}.
		\end{equation}
	\end{lemma}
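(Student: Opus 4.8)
The plan is to run the classical argument of Kanel$'$, using the two ingredients already established: the basic energy bound $\sup_{t}\int_{\mathbb{R}}\phi(v)(t,x)\,\mathrm{d}x\lesssim 1$ coming from Lemma \ref{L_bas} (since $\eta\geq\phi(v)$), and the bound $\sup_{t}\|\frac{\mu v_x}{v}(t)\|^2\lesssim 1+\lrn\theta\rrn$ from \eqref{E_vx} in Lemma \ref{L_v1}. Since $\mu=\tilde{\mu}h(v)\theta^{\alpha}$ and the assumptions \eqref{apriori3} together with \eqref{E_key1} give $\lrn\theta^{-\alpha}\rrn\lesssim 1$, the latter bound yields
\[
\int_{\mathbb{R}}\frac{h(v)^2 v_x^2}{v^2}\,\mathrm{d}x\lesssim 1+\lrn\theta\rrn\qquad\text{for every }t\in[0,T].
\]

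Next I introduce the Kanel$'$-type function
\[
\Phi(v):=\int_1^v\frac{h(\xi)\sqrt{\phi(\xi)}}{\xi}\,\mathrm{d}\xi,
\]
which is well defined, $C^1$, and strictly increasing on $(0,\infty)$ with $\Phi(1)=0$. Since $v(t,\cdot)-1\in H^3(\mathbb{R})$ we have $\Phi(v(t,x))\to0$ as $x\to\pm\infty$, so by the fundamental theorem of calculus and the Cauchy--Schwarz inequality (splitting the integrand as $\frac{h(v)|v_x|}{v}\cdot\sqrt{\phi(v)}$),
\[
|\Phi(v(t,x))|\le\int_{\mathbb{R}}\frac{h(v)\sqrt{\phi(v)}}{v}|v_x|\,\mathrm{d}x\le\Big(\int_{\mathbb{R}}\frac{h(v)^2 v_x^2}{v^2}\,\mathrm{d}x\Big)^{1/2}\Big(\int_{\mathbb{R}}\phi(v)\,\mathrm{d}x\Big)^{1/2}\lesssim(1+\lrn\theta\rrn)^{1/2}
\]
uniformly in $(t,x)\in[0,T]\times\mathbb{R}$.

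It then remains to bound $\Phi$ from below near $v=\infty$ and near $v=0$. From $\phi(z)=z-\ln z-1$ one has $\phi(z)\ge c_0 z$ for $z\ge2$ and $\phi(z)\ge c_0>0$ for $0<z\le\tfrac12$; combining these with $h(\xi)\ge C^{-1}\xi^{\ell_1}$ and $h(\xi)\ge C^{-1}\xi^{-\ell_2}$ respectively (from \eqref{h}) and integrating, one gets $\Phi(v)\gtrsim v^{\ell_1+1/2}$ once $v$ is large and $-\Phi(v)\gtrsim v^{-\ell_2}$ once $v$ is small; here $\ell_1\geq1$ (in fact $\ell_1>\tfrac12$) and $\ell_2\geq1$ (in fact $\ell_2>0$) guarantee that the integrals $\int\xi^{\ell_1-1/2}\,\mathrm{d}\xi$ and $\int\xi^{-\ell_2-1}\,\mathrm{d}\xi$ diverge at the relevant endpoints. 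Feeding these one-sided bounds into the uniform estimate $|\Phi(v(t,x))|\lesssim(1+\lrn\theta\rrn)^{1/2}$, and handling separately the ranges where $\lrn v\rrn$ (resp. $\lrn v^{-1}\rrn$) is already bounded by an absolute constant, produces $\lrn v\rrn\lesssim(1+\lrn\theta\rrn)^{1/(2\ell_1+1)}\lesssim1+\lrn\theta\rrn^{1/(1+2\ell_1)}$ and $\lrn v^{-1}\rrn\lesssim(1+\lrn\theta\rrn)^{1/(2\ell_2)}\lesssim1+\lrn\theta\rrn^{1/(2\ell_2)}$, which are exactly \eqref{bound_v00}. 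The only mildly delicate point is the elementary bookkeeping that converts the integral lower bounds on $\Phi$ into power bounds on $v$; no genuinely new obstacle arises once Lemmas \ref{L_bas} and \ref{L_v1} are available.
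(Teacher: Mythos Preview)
Your proof is correct and follows essentially the same Kanel$'$ argument as the paper: the same function $\Phi(v)=\int_1^v\frac{h(\xi)\sqrt{\phi(\xi)}}{\xi}\,\mathrm{d}\xi$, the same Cauchy--Schwarz splitting against $\|\sqrt{\phi(v)}\|$ and $\|\frac{h(v)v_x}{v}\|$, and the same one-sided growth bounds coming from \eqref{h}. The only (harmless) inaccuracy is your parenthetical claim that $\ell_1>\tfrac12$ is needed for the integral $\int\xi^{\ell_1-1/2}\,\mathrm{d}\xi$ to diverge at infinity; in fact any $\ell_1>-\tfrac12$ already gives the growth $\Phi(v)\gtrsim v^{\ell_1+1/2}$, but this is irrelevant under the standing assumption $\ell_1\ge1$.
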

	\begin{proof}
		Define
		$$
		\Phi(v):=\int_1^v\frac{\sqrt{\phi(z)}}{z}h(z)\mathrm{d}z.
		$$
		We infer from \eqref{h} that
		for suitably large constant $C$ and $v\geq C$,
		\begin{equation*}
			\begin{aligned}
				\Phi(v) \gtrsim \int_C^v\frac{\sqrt{\phi(z)}}{z}z^{{\ell_1}}
				\mathrm{d}z
				\gtrsim\int_C^vz^{-\frac12+{\ell_1}}\mathrm{d}z.
			\end{aligned}
		\end{equation*}
		Hence
		$$v^{\frac12+{\ell_1}}\lesssim 1+|\Phi(v)|\quad {\rm for\ all}\ v\in(0,\infty). $$
		Similarly, it follows that
		$$v^{-{\ell_2}}\lesssim 1+|\Phi(v)|\quad {\rm for\ all}\ v\in(0,\infty).$$
		Thus, we have
		\begin{equation}\label{kanel1}
			\lrn v\rrn^{\frac12+{\ell_1}}+\lrn v^{-1}\rrn^{{\ell_2}}\lesssim 1
			+\sup_{(t,x)\in[0,T]\times\mathbb{R}}|\Phi(v(t,x))|.
		\end{equation}
		On the other hand,  the a priori assumption $v-1\in C([0,T];H^3)
		$ implies
		\begin{equation*}\label{kanel2}
			\begin{aligned}
				|\Phi(v)(t,x)|=~&\left|\int_x^\infty\frac{\partial}{\partial x}\Phi(v(t,y))
				\mathrm{d}y\right|\\
				\leq~&\int_{\mathbb{R}}\sqrt{\phi(v(t,y))}\left|\left(\frac{h(v)}{v}v_x\right)(t,y)\right|
				\mathrm{d}y\\
				\leq~&\left\|\sqrt{\phi(v(t))}\right\|\left\|\left(\frac{h(v)v_x}{v}\right)(t)\right\|,
			\end{aligned}
		\end{equation*}
		which combined with Lemmas \ref{L_bas}--\ref{L_v1} and the conditions \eqref{apriori3}
		yields
		\begin{equation}\label{kanel3}
			\begin{split}
				|\Phi(v)(t,x)| \lesssim\left\|\left(\theta^{-\alpha}\frac{\mu v_x}{v}\right)(t)\right\|
				\lesssim 1+\lrn\theta\rrn^{\frac12}.
			\end{split}
		\end{equation}
		Combine \eqref{kanel1} and \eqref{kanel3} to deduce \eqref{bound_v00}.
		The proof is completed.
	\end{proof}
	
	\subsection{Pointwise bounds on temperature}\label{sec_theta}
	This part is devoted to
	obtaining
	pointwise upper and lower bounds of the temperature $\theta(t,x)$
	as well as
	the estimates on
	$H_x^1$-norm of $(v(t,x)-1,u(t,x),\theta(t,x)-1)$.
	We first consider the estimate on the
	$L_x^2(\mathbb{R})$-norm of $(\theta(t,x)-1)$ in the following lemma.
	\begin{lemma}
		\label{L_th1}
		Assume that the conditions listed in Lemma \ref{L_v1} hold. Then
		\begin{equation} \label{E_theta1}
			\begin{aligned}
				\sup_{t\in[0,T]}\left[
				\left\|(\theta-1)(t)\right\|^2
				+\left\|u(t)\right\|^4_{L^4}\right]
				+\int_0^T\int_{\mathbb{R}}\left[\frac{\kappa\theta_x^2}{v}
				+\theta\frac{\mu u_x^2}{v}+\frac{\mu u^2u_x^2}{v}\right]
				\lesssim 1.
			\end{aligned}
		\end{equation}
	\end{lemma}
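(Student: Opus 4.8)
The plan is to prove \eqref{E_theta1} by a coupled energy estimate. I would test the internal-energy equation \eqref{id_theta} against $(\theta-1)$, the momentum equation \eqref{NS_L}$_2$ against $u^{3}$, and \eqref{NS_L}$_2$ also against a $\theta$-weighted multiplier such as $u\theta$ (equivalently, exploit the conservative form \eqref{NS_L}$_3$), and then add these with suitable positive weights. Testing \eqref{id_theta} by $(\theta-1)$ and integrating by parts over $[0,t]\times\mathbb R$ gives, on the dissipation side, $\int_{0}^{t}\!\int_{\mathbb R}\kappa\theta_x^2/v$, and, on the right, the data $\tfrac{c_v}{2}\|\theta_0-1\|^{2}$ (which is $\lesssim1$) together with $\int_{0}^{t}\!\int_{\mathbb R}\mu u_x^2(\theta-1)/v$ and $-\int_{0}^{t}\!\int_{\mathbb R}\theta(\theta-1)u_x/v$. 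The first of these equals $\int\theta\mu u_x^2/v-\int\mu u_x^2/v$, and its leading piece is exactly the dissipation $\int\theta\mu u_x^2/v$ we must bound, so it is not available for absorption; this is the main structural issue. Testing \eqref{NS_L}$_2$ by $u\theta$ and integrating by parts in $x$ produces $+\int\theta\mu u_x^2/v$ on the dissipation side at the cost of the lower-order terms $\tfrac12\int\theta_tu^2$, $-\int u\theta P_x$ and $-\int\mu uu_x\theta_x/v$, so that forming $(\theta-1)$ times \eqref{id_theta} plus $\lambda$ times ($u\theta$ times \eqref{NS_L}$_2$) with a fixed $\lambda>1$ cancels $\int\theta\mu u_x^2/v$ on the right and retains, modulo terms that are $\lesssim1$ by Lemma \ref{L_bas} (on $\{\theta\le1\}$ one has $\mu u_x^2/v\le\mu u_x^2/(v\theta)$), a positive multiple of that dissipation on the left. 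Adding a small multiple of $u^{3}$ times \eqref{NS_L}$_2$ supplies $\|u(t)\|_{L^4}^4$ and $\int\mu u^2u_x^2/v$. I denote by $\mathcal A(t)$ the resulting nonnegative functional (built from $\tfrac{c_v}{2}\|(\theta-1)(t)\|^2$, a multiple of $\int_{\mathbb R}\theta u^2(t)$, and $\tfrac14\|u(t)\|_{L^4}^4$) and by $\mathcal D(t)$ the sum of the dissipation terms so produced.

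It then remains to bound every cross term by a small multiple of $\mathcal D$ plus controllable quantities. First I would substitute $\theta_t$ from \eqref{id_theta1} wherever it occurs, so that each term carrying a derivative of a transport coefficient acquires a factor $\mu_\theta=\alpha\mu/\theta$ or $\kappa_\theta=\alpha\kappa/\theta$, hence a factor $|\alpha|$; the only contributions \emph{without} such a factor are the kinetic/pressure terms $\theta(\theta-1)u_x/v$, $u\theta P_x$ and $u^2u_x\theta/v$, together with $\kappa\theta_{xx}/v$, $\mu u_x^2/v$ and $\theta u_x/v$. Next I would write $\theta=1+(\theta-1)$ throughout, so as to peel off the non-decaying far-field constant before applying Cauchy's inequality. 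The remaining integrals are then controlled by: Lemma \ref{L_bas}, which furnishes $\int_0^T\!\int\mu u_x^2/(v\theta)$, $\int_0^T\!\int\kappa\theta_x^2/(v\theta^2)$ and $\sup_t\int_{\mathbb R}\eta$ (hence $\sup_t\|u(t)\|^2\lesssim1$); the structural hypotheses \eqref{h} with $\ell_1,\ell_2\ge1$, which give $h(v)\gtrsim1$, $\bigl(h(v)v\bigr)^{-1}\lesssim1$, and, together with \eqref{apriori3}, $\mu/v\gtrsim1$ and $\kappa/v\gtrsim1$ (so that $\|u_x\|^2\lesssim\int\mu u_x^2/v$ and $\|\theta_x\|^2\lesssim\int\kappa\theta_x^2/v$); the Gagliardo--Nirenberg inequality $\|f\|_{L^\infty}^2\lesssim\|f\|\,\|f_x\|$, applied to $\theta-1$ and $u$; and the a priori assumptions \eqref{apriori3}, which make $\theta^{\pm\alpha}\lesssim1$ and, decisively, allow every $|\alpha|$-proportional term to absorb the (large) powers of $m_1^{-1}$, $m_2^{-1}$ and $N$ that accompany it---equivalently of $\lrn\theta\rrn$, $\lrn v\rrn$ and $\lrn v^{-1}\rrn$, each of which is bounded by a fixed power of the quantity $\Xi(m_1,m_2,N)$ of Lemma \ref{L_v1}---since $|\alpha|\,\Xi\le\epsilon_1$. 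The delicate terms are the clean ($\alpha$-free) pressure-work pieces: for these one splits $\theta=1+(\theta-1)$ and, on the constant part, integrates by parts in time using $u_x=v_t$, converting $\int(\cdots)u_x/v$-type integrals into time-exact boundary terms plus integrals in which $\theta_t$ reappears and is replaced once more via \eqref{id_theta1}, thereby regaining a factor $|\alpha|$; the cubic self-interactions $\int(\theta-1)^3$ and $\int u^2(\theta-1)^2$ left over are absorbed into $\int\kappa\theta_x^2/v$ by Gagliardo--Nirenberg, with a constant made small by the choice of $\epsilon_1$.

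Assembling everything, I expect a differential inequality of the form $\tfrac{d}{dt}\mathcal A(t)+c\,\mathcal D(t)\le C+C\epsilon_1\mathcal D(t)+g(t)\mathcal A(t)$, with $g\ge0$ built---up to a factor $|\alpha|$ wherever a $\lrn\theta\rrn$-growth had to be traded off---from $\|\theta_x\|^2$ and $\|u_x\|^2$, so that $\int_0^Tg\lesssim\epsilon_1\int_0^T\mathcal D+1$. Taking $\epsilon_1$ small, absorbing $\mathcal D$, and invoking Gronwall's inequality---in a self-improving form, since the coefficient of $\sup_{[0,T]}\mathcal A$ in the resulting (quadratic) inequality is $O(\epsilon_1)$---yields $\sup_{[0,T]}\mathcal A+\int_0^T\mathcal D\lesssim1$, which is \eqref{E_theta1}. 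The principal obstacle is the mechanism of the first paragraph: obtaining $\int_0^T\!\int\theta\mu u_x^2/v$ on the left with the correct sign and a fixed coefficient, even though this quantity is not controlled a priori and coincides, modulo the lower-order $\int\mu u_x^2/v$, with a genuine source term. A second, more bookkeeping, difficulty is to ensure that the final constant depends only on $\Pi_0$, $V_0$ and $H(V_0)$, with no surviving power of $\lrn\theta\rrn$, $N$ or $m_i$---and it is precisely here that the smallness $|\alpha|\,\Xi\le\epsilon_1$ from \eqref{apriori3} is used.
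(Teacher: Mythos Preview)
Your scheme differs from the paper's and contains a real gap. The step that fails is the one you flag as ``delicate'': you assert that after splitting $\theta=1+(\theta-1)$ in the clean pressure--work pieces and integrating the constant part by parts in time via $u_x=v_t$, the resulting $\theta_t$ terms, once substituted from \eqref{id_theta1}, ``regain a factor $|\alpha|$''. They do not. The expansion \eqref{id_theta1} reads
\[
c_v\theta_t=\Big(\frac{\kappa\theta_x}{v}\Big)_x+\frac{\mu u_x^2}{v}-\frac{\theta u_x}{v},
\]
and only the single contribution $\kappa_\theta\theta_x^2/v$ inside $(\kappa\theta_x/v)_x$ carries an $\alpha$; the dominant pieces $\kappa\theta_{xx}/v$, $\mu u_x^2/v$ and $\theta u_x/v$ are $\alpha$-free. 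Consequently your time integration by parts merely reshuffles the bad terms and produces new ones (e.g.\ boundary integrals involving $\ln v$, and $\int \kappa\theta_x v_x/v^2$) that are no easier to close. Likewise your claim that ``the cubic self-interactions $\int(\theta-1)^3$ and $\int u^2(\theta-1)^2$ are absorbed \dots\ with a constant made small by the choice of $\epsilon_1$'' is unsupported: these cubic pieces carry no $|\alpha|$ at all, so the smallness hypothesis $|\alpha|\,\Xi\le\epsilon_1$ is irrelevant to them. Once you try to bound, say, $\int(\theta-1)^2 u_x/v$ by Cauchy against $\int\theta\mu u_x^2/v$, the remainder $\int(\theta-1)^4/(\theta\mu v)$ forces an $m_2^{-1}$ on the set $\{\theta<1/2\}$, and the conclusion $\lesssim 1$ (with constant depending only on $\Pi_0$, $V_0$, $H(V_0)$) is lost.

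The paper avoids all of this by a device absent from your outline: it tests \eqref{id_theta} not with $(\theta-1)$ but with the \emph{truncation} $(\theta-2)_+$, and tests \eqref{NS_L}$_2$ with $2u(\theta-2)_+$ rather than $u\theta$. This confines every dangerous integral to the super-level set $\Omega_2(\tau)=\{\theta(\tau,\cdot)>2\}$, on which $\theta\ge2$, so no $m_2^{-1}$ can enter; on the complement $\{\theta\le2\}$ the basic entropy estimate \eqref{E_basic} already controls $\kappa\theta_x^2/v$ and $\theta\mu u_x^2/v$. The residual error is the single scalar quantity $\int_0^t\sup_x(\theta-\tfrac32)_+^2\,d\tau$, which is bounded (see \eqref{pro3.9}) by $\delta\int\kappa\theta_x^2/v+C(\delta)$ via the fundamental theorem of calculus and \eqref{E_h2}, and then absorbed. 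No $|\alpha|$-smallness is used for absorption in this lemma beyond the harmless $\theta^{\pm\alpha}\lesssim1$ needed for \eqref{E_h2}. The $\|(\theta-1)(t)\|^2$ bound then follows from the $(\theta-2)_+^2$ estimate together with $\int\phi(\theta)\lesssim1$ on $\{\theta\le2\}$. The truncation is the missing idea in your proposal.
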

	\begin{proof}
		For each $t\geq0$ and $a>1$, define
		$$\Omega_a(t):=\{x\in\mathbb{R}:\ \theta(t,x)>a\}.$$
		Multiply \eqref{id_theta} by $(\theta-2)_+:=\max\{\theta-2,0\}$,
		and integrate the resulting identity over $[0,t]\times\mathbb{R}$ to find
		\begin{align}\notag
			&\frac{c_v}2\int_{\mathbb{R}}(\theta-2)_+^2\mathrm{d}x
			-\frac{c_v}2\int_{\mathbb{R}}(\theta_0-2)_+^2\mathrm{d}x
			+\int_0^t\int_{\Omega_2(\tau)}\frac{\kappa\theta_x^2}{v}\\
			&\qquad \label{E_theta1a}
			=
			-\int_0^t\int_{\mathbb{R}}\frac{\theta u_x}{v}(\theta-2)_+
			+\int_0^t\int_{\mathbb{R}}\frac{\mu u_x^2}{v}(\theta-2)_+.
		\end{align}
		To estimate the last term in this last identity,
		we multiply $\eqref{NS_L}_2$  by $2u(\theta-2)_+$
		and then integrate the resulting identity over $[0,t]\times\mathbb{R}$ to infer
		\begin{align} \notag
			&\int_{\mathbb{R}} u^2(\theta-2)_+\mathrm{d}x
			-\int_{\mathbb{R}}u_0^2(\theta_0-2)_+\mathrm{d}x
			+2\int_0^t\int_{\mathbb{R}}\frac{\mu u_x^2}{v}(\theta-2)_+ \\
			&\qquad \label{pro3.1}
			=2\int_0^t\int_{\mathbb{R}}\frac{\theta}{v}u_x(\theta-2)_+
			+\int_0^t\int_{\Omega_2(\tau)}
			\left[2\frac{ \theta}{v}u\theta_x
			-2\frac{\mu u u_x}{v}\theta_x
			+ u^2\theta_t\right].
		\end{align}
		Combine \eqref{E_theta1a} and \eqref{pro3.1} to get
		\begin{align} \notag
			&\int_{\mathbb{R}}\left[\frac{c_v}2(\theta-2)_+^2+u^2(\theta-2)_+\right]\mathrm{d}x
			+\int_0^t\int_{\Omega_2(\tau)}\left[\frac{\kappa\theta_x^2}{v}
			+\frac{\mu u_x^2}{v}(\theta-2)_+\right] \\
			&\qquad \label{E_theta1b}
			=\int_{\mathbb{R}} \left[\frac{c_v}2(\theta_0-2)_+^2+u_0^2(\theta_0-2)_+\right]\mathrm{d}x
			+\sum_{p=1}^5\mathcal{J}_p,
		\end{align}
		where each term $\mathcal{J}_p$ in the decomposition will be defined below.
		First, we consider the term
		\begin{equation*}
			\mathcal{J}_1:=\int_0^t\int_{\mathbb{R}}\frac{\theta}{v}(\theta-2)_+u_x.
		\end{equation*}
		We deduce from the condition \eqref{h} that
		\begin{equation}\label{E_h1}
			\lrn h(v)^{-1}v^{-1}\rrn+\lrn h(v)^{-1}v\rrn \lesssim 1,
		\end{equation}
		which along with \eqref{apriori3} implies
		\begin{equation}\label{E_h2}
			\lrn \mu^{-1}v^{-1}\rrn+\lrn \mu^{-1} v\rrn \lesssim 1.
		\end{equation}
		It follows from Cauchy's inequality and \eqref{E_h2} that
		\begin{align}\notag
			|\mathcal{J}_1|
			\leq&~ \epsilon\int_0^t\int_{\mathbb{R}}\frac{\mu u_x^2}{v}(\theta-2)_+
			+C(\epsilon)\lrn\mu^{-1}v^{-1}\rrn
			\int_0^t\int_{\Omega_{2}(\tau)}\theta^2(\theta-2)_+\\ \label{J1}
			\leq&~ \epsilon\int_0^t\int_{\mathbb{R}}\frac{\mu u_x^2}{v}(\theta-2)_+
			+C(\epsilon)\int_0^t\sup_{x\in\mathbb{R}}\left(\theta-\tfrac{3}{2}\right)_+^2\mathrm{d}\tau.
		\end{align}
		Here we have used
		\begin{equation} \label{key1}
			\int_{\Omega_{2}(\tau)}\theta\mathrm{d}x
			\lesssim \int_{\mathbb{R}}\phi(\theta)\mathrm{d}x\lesssim 1.
		\end{equation}
		For the terms
		\begin{equation*}
			\mathcal{J}_2:=2\int_0^t\int_{\Omega_2(\tau)}\frac{\theta}{v}u\theta_x
			\quad{\rm and}\quad
			\mathcal{J}_3:=-2\int_0^t\int_{\Omega_2(\tau)}\frac{\mu uu_x}{v}\theta_x,
		\end{equation*}
		we derive from Cauchy's inequality, \eqref{E_basic}, and \eqref{E_h2}  that
		\begin{align} \notag
			|\mathcal{J}_2|
			\leq&~\epsilon\int_0^t\int_{\Omega_2(\tau)}\frac{\kappa\theta_x^2}{v}
			+C(\epsilon)\lrn\kappa^{-1}v^{-1}\rrn\int_0^t\int_{\Omega_2(\tau)}\theta^2u^2\\ \label{J2}
			\leq&~\epsilon\int_0^t\int_{\Omega_2(\tau)}\frac{\kappa\theta_x^2}{v}
			+C(\epsilon)\int_0^t\sup_{x\in\mathbb{R}}\left(\theta-\tfrac{3}{2}\right)_+^2\mathrm{d}\tau,
		\end{align}
		and
		\begin{equation}\label{J3}
			\begin{aligned}
				|\mathcal{J}_3|
				\leq&~\epsilon\int_0^t\int_{\Omega_2(\tau)}\frac{\kappa\theta_x^2}{v}
				+C(\epsilon)\int_0^t\int_{\Omega_2(\tau)}\frac{\mu u^2u_x^2}{v}.
			\end{aligned}
		\end{equation}
		For the term
		\begin{equation*}
			\mathcal{J}_4:=\frac1{c_v}\int_0^t\int_{\Omega_2(\tau)}u^2
			\left[\mu\frac{u_x^2}v-\frac{\theta}vu_x\right],
		\end{equation*}
		similar to the estimate for $\mathcal{J}_2$, we have
		\begin{align}\notag
			|\mathcal{J}_4| 
			\lesssim&~\int_0^t\int_{\Omega_2(\tau)}\frac{\mu u^2u_x^2}{v}
			+\lrn\mu^{-1}v^{-1}\rrn\int_0^t\int_{\Omega_2(\tau)}\theta^2u^2\\
			\label{J4}
			\lesssim&~\int_0^t\int_{\Omega_2(\tau)}\frac{\mu u^2u_x^2}{v}
			+\int_0^t\sup_{x\in\mathbb{R}}\left(\theta-\tfrac{3}{2}\right)_+^2\mathrm{d}\tau.
		\end{align}
		For the last term
		$$\mathcal{J}_5
		:=\int_0^t\int_{\Omega_2(\tau)}\frac{1}{c_v}u^2
		\left(\frac{\kappa\theta_x}v\right)_x,$$
		we apply Lebesgue's dominated convergence theorem to find
		\begin{equation*}
			\begin{aligned}
				\mathcal{J}_5
				=&~\int_0^t\int_{\mathbb{R}}\lim_{\nu\rightarrow 0^+}
				\varphi_\nu(\theta)\frac{u^2}{c_v}\left(\frac{\kappa\theta_x}v\right)_x\\
				=&~\frac{1}{c_v}\lim_{\nu\rightarrow 0^+}\int_0^t
				\int_{\mathbb{R}}\left[-2\varphi_\nu(\theta)uu_x
				\frac{\kappa\theta_x}v-\varphi_\nu'(\theta)u^2\frac{\kappa\theta_x^2}{v}\right],
			\end{aligned}
		\end{equation*}
		where $\varphi_\nu$ is defined by
		\begin{equation*}
			\varphi_\nu(\theta):=
			\left\{
			\begin{array}{rcl}
				&1,  & \quad {\theta-2\geq\nu,}\\
				&(\theta-2)/\nu,  &\qquad { 0\leq\theta-2<\nu,}\\
				&0,   &\qquad {\theta-2<0.}
			\end{array}\right.
		\end{equation*}
		Hence 
		\begin{align} \notag
			\mathcal{J}_5
			\leq&~-\frac{2}{c_v}\lim_{\nu\rightarrow 0^+}\int_0^t
			\int_{\mathbb{R}}\varphi_\nu(\theta)uu_x\frac{\kappa\theta_x}v\\
			\notag
			\leq&~-\frac{2}{c_v}\int_0^t
			\int_{\Omega_2(\tau)}\frac{\kappa uu_x\theta_x}v\\
			\label{J5}
			\leq&~\epsilon\int_0^t\int_{\Omega_2(\tau)}\frac{\kappa\theta_x^2}{v}
			+C(\epsilon)\int_0^t\int_{\Omega_2(\tau)}\frac{\mu u^2u_x^2}{v}.
		\end{align}
		Plug the estimates \eqref{J1}--\eqref{J5} into \eqref{E_theta1b} to infer
		\begin{align} \notag
			&\int_{\mathbb{R}}(\theta-2)_+^2\mathrm{d}x
			+\int_0^t\int_{\Omega_2(\tau)}\left[\frac{\kappa\theta_x^2}{v}
			+\frac{\mu u_x^2}{v}(\theta-2)_+\right] \\
			&\qquad \label{E_theta1c}
			\lesssim
			1+
			\int_0^t\sup_{x\in\mathbb{R}}\left(\theta-\tfrac{3}{2}\right)_+^2\mathrm{d}\tau
			+\int_0^t\int_{\Omega_2(\tau)}\frac{\mu u^2u_x^2}{v}.
		\end{align}
		
		It follows from \eqref{E_basic} that
		\begin{align} \notag
			\int_0^t\int_{\mathbb{R}}\frac{\kappa\theta_x^2}{v}
			=&~\int_0^t\left[\int_{\Omega_2(\tau)}+\int_{\mathbb{R}\setminus\Omega_2(\tau)}\right]
			\frac{\kappa\theta_x^2}{v}\\ \notag
			\leq&~\int_0^t\int_{\Omega_2(\tau)}\frac{\kappa \theta_x^2}{v}
			+C\int_0^t\int_{\mathbb{R}}\frac{\kappa \theta_x^2}{v\theta^2}\\
			\label{pro3.5}
			\lesssim&~1+\int_0^t\int_{\Omega_2(\tau)}\frac{\kappa \theta_x^2}{v},
		\end{align}
		and
		\begin{align} \notag
			\int_0^t\int_{\mathbb{R}}\theta\frac{\mu u_x^2}{v}
			=&~\int_0^t\left[\int_{\Omega_3(\tau)}+\int_{\mathbb{R}\setminus\Omega_3(\tau)}\right]
			\theta\frac{\mu u_x^2}{v}\\ \notag
			\lesssim&~\int_0^t\int_{\Omega_2(\tau)}\frac{\mu u_x^2}{v}(\theta-2)_+
			+\int_0^t\int_{\mathbb{R}}\frac{\mu u_x^2}{v\theta}\\ \label{pro3.6}
			\lesssim&~1+\int_0^t\int_{\Omega_2(\tau)}\frac{\mu u_x^2}{v}(\theta-2)_+.
		\end{align}
		Insert \eqref{pro3.5} and \eqref{pro3.6} into \eqref{E_theta1c} to discover
		\begin{align} \notag
			&\int_{\mathbb{R}}(\theta-2)_+^2\mathrm{d}x
			+\int_0^t\int_{\mathbb{R}}\left[\frac{\kappa\theta_x^2}{v}
			+\theta\frac{\mu u_x^2}{v}\right] \\ \label{E_theta1d}
			&\qquad
			\lesssim 1+
			\int_0^t\sup_{x\in\mathbb{R}}\left(\theta-\tfrac{3}{2}\right)_+^2\mathrm{d}\tau
			+\int_0^t\int_{\Omega_2(\tau)}\frac{\mu u^2u_x^2}{v}.
		\end{align}
		
		In order to estimate the last term in \eqref{E_theta1d},
		we multiply $\eqref{NS_L}_2$ by $u^3$ to have
		\begin{equation*}
			\left(\frac14u^4\right)_t
			+ \left[u^3\left(\frac{\theta}{v}-1\right)-u^3\frac{\mu u_x}{v}\right]_x
			=3u^2u_x\left[\frac{\theta}{v}-1-\frac{\mu u_x}{v}\right].
		\end{equation*}
		Integrate the above identity over $[0,t]\times\mathbb{R}$ to obtain
		\begin{equation}\label{pro3.7}
			\int_{\mathbb{R}}u^4\mathrm{d}x
			+\int_0^t\int_{\mathbb{R}}\frac{\mu u^2u_x^2}{v}
			\lesssim 1+ \sum_{p=1}^4\mathcal{I}_p,
		\end{equation}
		where each term $\mathcal{I}_p$ in the decomposition will be defined and estimated  as follows.
		First we consider the term
		\begin{equation*}
			\mathcal{I}_1:=\int_0^t\int_{\Omega_2(\tau)}u^2u_x\frac{\theta-1}{v}.
		\end{equation*}
		Applying Cauchy's inequality, \eqref{E_basic}, and \eqref{E_h2}, we get
		\begin{align} \notag
			|\mathcal{I}_1|
			\leq&~\nu\int_0^t\int_{\mathbb{R}}\frac{\mu u^2u_x^2}{v}
			+C(\nu)\lrn\mu^{-1}v^{-1}\rrn\int_0^t\int_{\Omega_2(\tau)}(\theta-1)^2u^2\\ \label{I1}
			\leq&~\nu\int_0^t\int_{\mathbb{R}}\frac{\mu u^2u_x^2}{v}
			+C(\nu)\int_0^t\sup_{x\in\mathbb{R}}\left(\theta-\tfrac{3}{2}\right)_+^2\mathrm{d}\tau.
		\end{align}
		For the terms
		\begin{equation*}
			\mathcal{I}_2 :=\int_0^t\int_{\mathbb{R}\setminus\Omega_2(\tau)}
			u^2u_x\frac{\theta-1}{v}\quad {\rm and}\quad
			\mathcal{I}_3 :=\int_0^t\int_{v\leq 2} u^2u_x\frac{1-v}{v},
		\end{equation*}
		we have from \eqref{E_phi} and \eqref{E_basic} that
		\begin{equation*}
			\int_{\mathbb{R}\setminus\Omega_2(\tau)}(\theta-1)^2\mathrm{d}x
			+\int_{v(\tau,x)\leq 2}(v-1)^2\mathrm{d}x
			\lesssim \int_{\mathbb{R}}\eta(v,u,\theta)\mathrm{d}x
			\lesssim 1.
		\end{equation*}
		In view of H\"{o}lder's inequality and \eqref{E_basic}, we deduce
		\begin{equation*}
			\mathcal{I}_{2}+\mathcal{I}_{3}
			\lesssim\int_0^t\|u \|_{L^\infty}^2\left\|\frac{u_x}{v} \right\|
			\lesssim\int_0^t\|u_x \|\left\|\frac{u_x}{v} \right\|
			\lesssim \int_0^t\int_{\mathbb{R}}u_x^2
			+\int_0^t\int_{\mathbb{R}}\frac{u_x^2}{v^2}.
		\end{equation*}
		Applying Cauchy's inequality again,
		we infer from \eqref{E_basic} and \eqref{E_h2} that
		\begin{align}\notag
			\mathcal{I}_{2}+\mathcal{I}_{3}
			&\lesssim \epsilon\int_0^t\int_{\mathbb{R}}\frac{\mu\theta u_x^2}{v}
			+C(\epsilon)\int_0^t\int_{\mathbb{R}}
			\left[\frac{v u_x^2}{\mu \theta}+\frac{u_x^2}{v^3\mu\theta}\right]\\
			\notag
			&\lesssim \epsilon\int_0^t\int_{\mathbb{R}}\frac{\mu\theta u_x^2}{v}
			+C(\epsilon)\lrn\mu^{-2}v^{2}+\mu^{-2}v^{-2}\rrn
			\int_0^t\int_{\mathbb{R}}\frac{\mu u_x^2}{v\theta}\\ \label{I23}
			&\lesssim \epsilon\int_0^t\int_{\mathbb{R}}\frac{\mu\theta u_x^2}{v}
			+C(\epsilon).
		\end{align}
		Let us now consider the term
		\begin{equation*}
			\mathcal{I}_4:=\int_0^t\int_{v\geq 2}u^2u_x\frac{1-v}{v}.
		\end{equation*}
		In view of \eqref{E_phi},
		we obtain that $\left|\frac{1-v}{v}\right|\lesssim\sqrt{\phi(v)}$ for all $v\geq 2$,
		which combined with
		\eqref{E_basic} implies
		\begin{equation*}
			\begin{aligned}
				\mathcal{I}_{4}\lesssim \int_0^t\int_{v\geq 2}u^2|u_x|\sqrt{\phi(v)}
				\lesssim \int_0^t\|u\|_{L^\infty}^2\|u_x \|\left\|\sqrt{\phi(v)}\right\|
				\lesssim \int_0^t\|u_x\|^2.
			\end{aligned}
		\end{equation*}
		Hence we have from \eqref{E_basic} and \eqref{E_h2} that
		\begin{equation}\label{I4}
			\mathcal{I}_{4}
			\lesssim\epsilon\int_0^t\int_{\mathbb{R}}\frac{\mu\theta u_x^2}{v}
			+C(\epsilon)\lrn\mu^{-2}v^2\rrn\int_0^t\int_{\mathbb{R}}
			\frac{\mu u_x^2}{v \theta}
			\lesssim \epsilon\int_0^t\int_{\mathbb{R}}\frac{\mu\theta u_x^2}{v}
			+C(\epsilon).
		\end{equation}
		Insert \eqref{I1}--\eqref{I4} into \eqref{pro3.7} and let $\nu>0$ suitably small to derive
		\begin{equation}\label{pro3.7a}
			\begin{aligned}
				\int_{\mathbb{R}}u^4\mathrm{d}x
				+\int_0^t\int_{\mathbb{R}}\frac{\mu u^2u_x^2}{v}
				\lesssim
				C(\epsilon)+\epsilon\int_0^t\int_{\mathbb{R}}\frac{\mu\theta u_x^2}{v}
				+\int_0^t\sup_{x\in\mathbb{R}}\left(\theta-\tfrac{3}{2}\right)_+^2\mathrm{d}\tau.
			\end{aligned}
		\end{equation}
		Combining \eqref{E_theta1d} and \eqref{pro3.7a},
		we take $\epsilon>0$ small enough to get
		\begin{align}\notag
			&\int_{\mathbb{R}}\left[(\theta-2)_+^2+u^4\right]\mathrm{d}x
			+\int_0^t\int_{\mathbb{R}}\left[\frac{\kappa\theta_x^2}{v}
			+\theta\frac{\mu u_x^2}{v}+\frac{\mu u^2u_x^2}{v}\right]\\
			\label{E_theta1e} &\qquad \lesssim 1+
			\int_0^t\sup_{x\in\mathbb{R}}\left(\theta-\tfrac{3}{2}\right)_+^2\mathrm{d}\tau.
		\end{align}
		In light of the fundamental theorem of calculus and \eqref{key1}, we infer
		\begin{align}\notag
			\int_0^t\sup_{x\in\mathbb{R}}\left(\theta-\tfrac{3}{2}\right)_+^2\mathrm{d}\tau
			\lesssim&~ \int_0^t\int_{\Omega_{3/2}(\tau)}\frac{\theta_x^2}\theta\\
			\notag
			\lesssim&~\delta\int_0^t\int_{\mathbb{R}}\frac{\kappa\theta_x^2}v
			+C(\delta)\int_0^t\int_{\mathbb{R}}\frac{v\theta_x^2}{\kappa\theta^2}\\ \notag
			\lesssim&~\delta\int_0^t\int_{\mathbb{R}}\frac{\kappa\theta_x^2}v
			+C(\delta)\int_0^t\int_{\mathbb{R}}\frac{\kappa\theta_x^2}{v\theta^2}
			\lrn\frac{v^2}{\kappa^2}\rrn\\ \label{pro3.9}
			\lesssim&~\delta\int_0^t\int_{\mathbb{R}}\frac{\kappa\theta_x^2}v
			+C(\delta).
		\end{align}
		If we plug this last inequality into \eqref{E_theta1e} and
		choose $\delta>0$ sufficiently small,
		then we can deduce \eqref{E_theta1} and hence
		finish the proof of the lemma.
	\end{proof}
	The next lemma concerns the estimate for the first-order derivative 
	with respect to $x$ of $v(t,x)$.
	\begin{lemma}
		Assume that the conditions listed in Lemma \ref{L_v1} hold. Then
		\begin{equation}\label{E_vx'}
			\sup_{t\in[0,T]}\left\|\frac{\mu v_x}{v}(t)\right\|^2
			+\int_0^T\int_{\mathbb{R}}\frac{\mu \theta v_x^2}{v^3}\lesssim 1.
		\end{equation}
	\end{lemma}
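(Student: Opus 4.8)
The plan is to revisit the estimate \eqref{E_vx6} that was obtained in the course of proving Lemma \ref{L_v1}. Under the a priori assumptions \eqref{apriori3} (which are precisely ``the conditions listed in Lemma \ref{L_v1}'' assumed here), that inequality reads
\[
\left\|\frac{\mu v_x}{v}(t)\right\|^2 + \int_0^t\int_{\mathbb{R}}\frac{\mu\theta v_x^2}{v^3}
\lesssim 1 + \int_0^t\int_{\mathbb{R}}\left[\frac{\mu u_x^2}{v}+\frac{\mu\theta_x^2}{v\theta}\right].
\]
In the proof of Lemma \ref{L_v1} the right-hand side was only bounded by $\lrn\theta\rrn$ (via \eqref{E_basic}), because the refined dissipation bound \eqref{E_theta1} of Lemma \ref{L_th1} was not yet available. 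Now it is, and it upgrades the factor $1+\lrn\theta\rrn$ to just $1$, which gives \eqref{E_vx'} upon taking the supremum over $t\in[0,T]$ (and $t=T$ in the dissipation term).

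Concretely, I would first bound $\int_0^T\!\int_{\mathbb{R}}\mu u_x^2/v$ by splitting $\mathbb{R}$, at each fixed time, into $\{\theta\ge 1\}$ and $\{\theta<1\}$. On $\{\theta\ge1\}$ one has $\mu u_x^2/v\le \theta\,\mu u_x^2/v$, whose space-time integral is $\lesssim 1$ by \eqref{E_theta1}; on $\{\theta<1\}$ one has $\mu u_x^2/v\le \mu u_x^2/(v\theta)$, whose space-time integral is $\lesssim 1$ by the basic energy estimate \eqref{E_basic}. Hence $\int_0^T\!\int_{\mathbb{R}}\mu u_x^2/v\lesssim 1$.

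Next I would treat $\int_0^T\!\int_{\mathbb{R}}\mu\theta_x^2/(v\theta)$ in the same spirit, crucially using that the structural choice \eqref{transport} makes $\mu/\kappa=\tilde\mu/\tilde\kappa$ a positive constant, so that $\mu\theta_x^2/(v\theta)=(\tilde\mu/\tilde\kappa)\,\kappa\theta_x^2/(v\theta)$. On $\{\theta\ge1\}$ this is $\le(\tilde\mu/\tilde\kappa)\,\kappa\theta_x^2/v$, integrable in space-time by \eqref{E_theta1}; on $\{\theta<1\}$ it is $\le(\tilde\mu/\tilde\kappa)\,\kappa\theta_x^2/(v\theta^2)$, integrable by \eqref{E_basic}. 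Thus $\int_0^T\!\int_{\mathbb{R}}\mu\theta_x^2/(v\theta)\lesssim 1$ as well. Plugging both bounds into the displayed inequality yields \eqref{E_vx'}.

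I do not expect a genuine obstacle here: once Lemma \ref{L_th1} is in hand, this lemma is essentially a corollary of it together with \eqref{E_vx6}. The only points to be careful about are that the decomposition into $\{\theta\ge1\}$ and $\{\theta<1\}$ is exactly what lets one feed the two different dissipation functionals of \eqref{E_basic} and \eqref{E_theta1} into the estimate, and that the cancellation $\mu/\kappa=\mathrm{const}$, a consequence of \eqref{transport}, is what makes the $\theta_x$-term close; all the hard analysis was already carried out in establishing \eqref{E_theta1}.
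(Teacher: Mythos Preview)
Your proposal is correct and follows essentially the same approach as the paper: start from \eqref{E_vx6} and upgrade the bound using the new dissipation estimate \eqref{E_theta1} together with \eqref{E_basic}. The only cosmetic difference is that the paper replaces your domain decomposition $\{\theta\ge 1\}\cup\{\theta<1\}$ by the pointwise inequality $1\le\tfrac12(\theta+\theta^{-1})$ (``Cauchy's inequality''), writing
\[
\frac{\mu u_x^2}{v}\lesssim \frac{\mu\theta u_x^2}{v}+\frac{\mu u_x^2}{v\theta},
\qquad
\frac{\mu\theta_x^2}{v\theta}\lesssim \frac{\mu\theta_x^2}{v}+\frac{\mu\theta_x^2}{v\theta^2},
\]
and then invoking \eqref{E_theta1} and \eqref{E_basic} exactly as you do (with the same use of $\mu/\kappa=\tilde\mu/\tilde\kappa$).
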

	\begin{proof}
		Applying Cauchy's inequality,
		we deduce from \eqref{E_vx6}, \eqref{E_basic} and \eqref{E_theta1} that
		\begin{equation*}
			\begin{aligned}
				\left\|\frac{\mu v_x}{v}(t)\right\|^2
				+\int_0^t\int_{\mathbb{R}}\frac{\mu \theta v_x^2}{v^3}
				\lesssim 1+\int_0^t\int_{\mathbb{R}}
				\left[\frac{\mu\theta u_x^2}{v}+\frac{\mu u_x^2}{v\theta}
				+\frac{\mu \theta_x^2}{v}+\frac{\mu \theta_x^2}{v\theta^2}\right]
				\lesssim 1.
			\end{aligned}
		\end{equation*}
		The proof of the lemma is completed.
	\end{proof}
	For the estimate on the first-order derivative of
	$u(t,x)$, we have
	\begin{lemma}
		Assume that the conditions listed in Lemma \ref{L_v1} hold. Then
		\begin{equation}\label{E_ux}
			\sup_{t\in[0,T]}\|u_x(t)\|^2+\int_0^T\int_{\mathbb{R}}\frac{\mu u_{xx}^2}{v}
			\lesssim 1+\lrn\theta\rrn.
		\end{equation}
	\end{lemma}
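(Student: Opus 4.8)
The plan is to run a standard first-order energy estimate for $u$: differentiate \eqref{NS_L}$_2$ once in $x$ and test against $u_x$, equivalently multiply \eqref{NS_L}$_2$ by $-u_{xx}$ and integrate in $x$, the contributions at $x=\pm\infty$ vanishing thanks to $(v-1,u,\theta-1)\in C([0,T];H^3)$. Using $\left(\frac{\mu u_x}{v}\right)_x=\frac{\mu}{v}u_{xx}+\left(\frac{\mu}{v}\right)_x u_x$ and $P=\theta/v$, and integrating in time, this yields
\[
	\tfrac12\|u_x(t)\|^2+\int_0^t\!\!\int_{\mathbb{R}}\frac{\mu u_{xx}^2}{v}
	=\tfrac12\|u_{0x}\|^2+\int_0^t\!\!\int_{\mathbb{R}}P_x u_{xx}
	-\int_0^t\!\!\int_{\mathbb{R}}\left(\frac{\mu}{v}\right)_x u_x u_{xx},
\]
where $P_x=\theta_x/v-\theta v_x/v^2$. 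The first term on the right is $\lesssim1$ by \eqref{thm1a}, and the aim is to dominate the remaining two by $\epsilon\int_0^t\!\!\int_{\mathbb{R}}\mu u_{xx}^2/v$ plus quantities bounded by $1+\lrn\theta\rrn$, using \eqref{E_basic}, \eqref{E_theta1}, \eqref{E_vx'}, together with \eqref{E_h1}--\eqref{E_h2} and the a priori restrictions \eqref{apriori3}.

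For the pressure term, Cauchy's inequality with weight $\mu/v$ gives
\[
	\left|\int_0^t\!\!\int_{\mathbb{R}}P_x u_{xx}\right|
	\le\epsilon\int_0^t\!\!\int_{\mathbb{R}}\frac{\mu u_{xx}^2}{v}
	+C(\epsilon)\int_0^t\!\!\int_{\mathbb{R}}\left[\frac{\theta_x^2}{\mu v}+\frac{\theta^2 v_x^2}{\mu v^3}\right].
\]
Because $\mu$ and $\kappa$ are both comparable to $h(v)\theta^\alpha$, the conditions \eqref{h} and \eqref{apriori3} yield $(\mu\kappa)^{-1}\lesssim1$ and $\theta\mu^{-2}\lesssim\lrn\theta\rrn$, so that $\frac{\theta_x^2}{\mu v}\lesssim\frac{\kappa\theta_x^2}{v}$ and $\frac{\theta^2 v_x^2}{\mu v^3}\lesssim\lrn\theta\rrn\,\frac{\mu\theta v_x^2}{v^3}$; the space-time integrals of the latter two are $\lesssim1$ by \eqref{E_theta1} and \eqref{E_vx'}. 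Hence the pressure term is $\lesssim\epsilon\int_0^t\!\!\int_{\mathbb{R}}\mu u_{xx}^2/v+C(\epsilon)(1+\lrn\theta\rrn)$.

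For the viscous term I would write $\left(\frac{\mu}{v}\right)_x=\frac{(\mu_v v-\mu)v_x}{v^2}+\frac{\mu_\theta\theta_x}{v}$, with $\mu_v=\tilde\mu h'(v)\theta^\alpha$ and $\mu_\theta=\alpha\mu/\theta$, and, after Cauchy's inequality, reduce matters to estimating $\int_0^t\!\!\int_{\mathbb{R}}\frac{v}{\mu}\left(\frac{\mu}{v}\right)_x^2 u_x^2$. The $v_x$-part requires no smallness of $\alpha$: the bound $h'(v)^2v\le Ch(v)^3$ in \eqref{h} shows $\int_{\mathbb{R}}\frac{(\mu_v v-\mu)^2 v_x^2}{\mu v^3}\,\mathrm{d}x\lesssim\left\|\frac{\mu v_x}{v}\right\|^2\lesssim1$ uniformly in $t$ by \eqref{E_vx'}, so with the one-dimensional interpolation $\|u_x\|_{L^\infty}^2\lesssim\|u_x\|\,\|u_{xx}\|$ this part is $\lesssim\int_0^t\|u_x\|\|u_{xx}\|\le\epsilon\int_0^t\|u_{xx}\|^2+C(\epsilon)\int_0^t\|u_x\|^2$; the first piece is $\lesssim\epsilon\int_0^t\!\!\int_{\mathbb{R}}\mu u_{xx}^2/v$ by \eqref{E_h2}, and $\int_0^t\|u_x\|^2\le\lrn\tfrac{v\theta}{\mu}\rrn\int_0^t\!\!\int_{\mathbb{R}}\frac{\mu u_x^2}{v\theta}\lesssim\lrn\theta\rrn$ by \eqref{E_basic}. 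The $\theta_x$-part equals $\alpha^2\int_0^t\!\!\int_{\mathbb{R}}\frac{\mu\theta_x^2 u_x^2}{v\theta^2}$: here the factor $|\alpha|$ is decisive, since after Cauchy's inequality every companion coefficient is a controlled power of $\Xi(m_1,m_2,N)$ and the surviving factors ($\|u_x\|_{L^\infty}^2\lesssim N^2$, the dissipation integrals in \eqref{E_basic} and \eqref{E_theta1}, and $\int_0^t\!\!\int_{\mathbb{R}}\kappa\theta_{xx}^2/v$, which is $\lesssim1+\Xi(m_1,m_2,N)^{1/8}$) are all controlled, so by \eqref{apriori3} the whole $\theta_x$-part is $\lesssim\epsilon\int_0^t\!\!\int_{\mathbb{R}}\mu u_{xx}^2/v+C(\epsilon)$. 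Collecting everything, choosing $\epsilon$ small and $\epsilon_1$ in \eqref{apriori3} small enough to absorb all $u_{xx}$-contributions, one obtains \eqref{E_ux}; note that no Gronwall argument is needed since $\int_0^t\|u_x\|^2\lesssim\lrn\theta\rrn$ already.

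The hard part is precisely the $\theta_x$-part of the viscous term: it is genuinely nonlinear and, at this stage of the a priori argument, there is as yet no uniform-in-$t$ bound on $\|\theta_x\|$, so it cannot be closed by a pointwise estimate of its coefficient. One has to exploit the structural identity $\mu_\theta=\alpha\mu/\theta$, which inserts the small factor $|\alpha|$, and then pair it with the heat-dissipation bounds and interpolation, keeping track that every companion factor is dominated by a fixed power of $\Xi(m_1,m_2,N)$ so that the restriction $\Xi(m_1,m_2,N)|\alpha|\le\epsilon_1$ makes this contribution harmless. A secondary, lower-level, difficulty is that $h$ and its derivatives are controlled only through $H(\cdot)$ and Lemma \ref{L_v2}, so all $v$- and $h(v)$-dependent weights have to be tracked with care to guarantee that the final bound grows at most linearly in $\lrn\theta\rrn$.
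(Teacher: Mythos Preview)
Your proposal is correct and follows essentially the same approach as the paper: multiply \eqref{NS_L}$_2$ by $-u_{xx}$, handle the pressure term via Cauchy and \eqref{E_theta1}--\eqref{E_vx'}, split the viscous error $(\mu/v)_x u_x u_{xx}$ into its $v_x$- and $\theta_x$-parts, treat the $v_x$-part with $h'(v)^2 v\le Ch(v)^3$, \eqref{E_vx'} and the interpolation $\|u_x\|_{L^\infty}^2\lesssim\|u_x\|\|u_{xx}\|$, and use $\mu_\theta=\alpha\mu/\theta$ together with \eqref{apriori3} to absorb the $\theta_x$-part. The only minor differences are cosmetic: the paper bounds $\int_0^t\int\mu u_x^2/v\lesssim 1$ (combining \eqref{E_basic} and \eqref{E_theta1}) rather than your $\lesssim\lrn\theta\rrn$, and it handles the $\theta_x$-part directly as $\alpha^2\lrn\mu u_x^2\rrn\int_0^t\int\kappa\theta_x^2/(v\theta^2)$ without invoking $\int_0^t\int\kappa\theta_{xx}^2/v$, which you mention but do not actually need here.
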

	\begin{proof}
		Multiply $\eqref{NS_L}_2$ by $u_{xx}$ to get
		\begin{equation*}
			\left(\frac12u_x^2\right)_t
			-(u_xu_t)_x-\left(\frac{\theta}{v}\right)_xu_{xx}
			=-\frac{\mu u_{xx}^2}{v}
			+\frac{\mu u_{xx}v_xu_x}{v^2}
			-\frac{\mu_xu_xu_{xx}}{v}.
		\end{equation*}
		We integrate the above identity over $[0,t]\times\mathbb{R}$
		and apply Cauchy's inequality to have
		\begin{equation*}
			\begin{aligned}
				\|u_x(t)\|^2+\int_0^t\int_{\mathbb{R}}\frac{\mu u_{xx}^2}{v}
				\lesssim ~&1+
				\lrn\frac{1}{\mu\kappa}\rrn
				\int_0^t\int_{\mathbb{R}}\frac{\kappa\theta_x^2}{v}
				+\lrn\frac{\theta}{\mu^2}\rrn\int_0^t\int_{\mathbb{R}}\frac{\mu\theta v_x^2}{v^3}\\
				&+\lrn\frac{1}{\mu v}\rrn
				\int_0^t
				\left\|u_x\right\|_{L^\infty}^2\left\|\frac{\mu v_x}{v}\right\|^2
				+\int_{0}^t\int_{\mathbb{R}}\frac{\mu_x^2 u_x^2}{\mu v}.
			\end{aligned}
		\end{equation*}
		In view of \eqref{h} and \eqref{apriori3}, we obtain
		\begin{equation*}
			\lrn\frac{1}{\mu\kappa}\rrn+
			\lrn\frac{1}{\mu v}\rrn\lesssim 1,
		\end{equation*}
		which combined with \eqref{E_vx'} and \eqref{E_theta1} yields
		\begin{equation}\label{E_ux1}
			\begin{aligned}
				\|u_x(t)\|^2+\int_0^t\int_{\mathbb{R}}\frac{\mu u_{xx}^2}{v}
				\lesssim 1+\lrn\theta\rrn
				+\int_0^t\left\|{u_x}\right\|_{L^\infty}^2
				+\int_{0}^t\int_{\mathbb{R}}\frac{\mu_x^2 u_x^2}{\mu v}.
			\end{aligned}
		\end{equation}
		We have from \eqref{E_h2} that
		\begin{align} \notag
			\int_0^t\left\|{u_x}\right\|_{L^\infty}^2&\lesssim\int_0^t\|u_x\|\|u_{xx}\|\\ \notag 
			&\lesssim\epsilon\int_0^t\int_{\mathbb{R}}\frac{\mu u_{xx}^2}{v}
			+C(\epsilon))\lrn\frac{v}{\mu}\rrn^2
			\int_0^t\int_{\mathbb{R}}\frac{\mu u_x^2}{v}\\ \label{pro6.4}
			&\lesssim\epsilon\int_0^t\int_{\mathbb{R}}\frac{\mu u_{xx}^2}{v}
			+C(\epsilon).
		\end{align}
		For the last term on the right-hand side of \eqref{E_ux1},
		we use \eqref{apriori3},
		\eqref{h}, \eqref{E_basic}, \eqref{E_vx'}, and \eqref{pro6.4} to discover
		\begin{align} \notag
			\int_{0}^t\int_{\mathbb{R}}\frac{\mu_x^2 u_x^2}{\mu v}
			&\leq \int_0^t\int_{\mathbb{R}}\frac{\mu_v^2v_x^2u_x^2}{\mu v}
			+\int_0^t\int_{\mathbb{R}}\frac{\mu_{\theta}^2{\theta}_x^2u_x^2}{\mu v}\\ \notag
			&\leq \lrn\frac{\mu_v^2 v}{\mu^3}\rrn
			\int_0^t\left\|u_x\right\|_{L^\infty}^2\left\|\frac{\mu v_x}{v}\right\|^2
			+\lrn\frac{\mu_{\theta}^2\theta^2 u_x^2}{\mu}\rrn
			\int_0^t\int_{\mathbb{R}}\frac{\mu\theta_x^2}{v\theta^2}\\
			\notag
			&\leq \lrn\frac{h'(v)^2v}{h(v)^3}\rrn
			\int_0^t\left\|{u_x}\right\|_{L^\infty}^2+
			\alpha^2N^2
			\int_0^t\int_{\mathbb{R}}\frac{\mu\theta_x^2}{v\theta^2}\\ 
			\label{pro6.6}
			&\lesssim \epsilon\int_0^t\int_{\mathbb{R}}\frac{\mu u_{xx}^2}{v}
			+C(\epsilon) +1.
		\end{align}
		Plug \eqref{pro6.4} and \eqref{pro6.6} into \eqref{E_ux1} and
		choose $\epsilon$ small enough to  derive \eqref{E_ux}.
	\end{proof}
	We now turn to deduce an upper bound on the temperature $\theta(t,x)$.
	\begin{lemma}
		\label{L_th2}
		Assume that the conditions listed in Lemma \ref{L_v1} hold. Then
		there exist positive constants $C_i$ ($i=1,2,3$), which
		depend
		only on $\Pi_0$, $V_0$, and $H(V_0)$,
		such that for all $(t,x)\in[0,T]\times\mathbb{R}$,
		\begin{gather}  \label{upper_th}
			\theta(t,x) \leq C_1,\\[1.5mm]
			\label{bound_v}
			C_2\leq v(t,x)\leq C_2^{-1}
			\\  		\label{E_1order} 
			|(v-1,u,\theta-1)(t)\|_1^2
			+\int_0^t\left[\left\|\sqrt{\theta}v_x(s)\right\|^2
			+\|(\theta_x,u_x)(s)\|_1^2\right]\mathrm{d}s\leq C_3^2.
		\end{gather}
	\end{lemma}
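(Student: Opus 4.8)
The three assertions will be proved in the order \eqref{upper_th}, \eqref{bound_v}, \eqref{E_1order}. Once the upper bound \eqref{upper_th} is available with a constant of the asserted type, Lemma~\ref{L_v2} immediately gives \eqref{bound_v} with some $C_2>0$; and then, since $v$ and $\theta$ are bounded from above and below, the structural relations $\mu^{-1}v^{\pm1}\lesssim1$, $h(v)^{-1}v^{\pm1}\lesssim1$, $h'(v)^2v\lesssim h(v)^3$, $\mu/\kappa=\tilde\mu/\tilde\kappa$ and $\theta^{\pm\alpha}\lesssim1$ (from \eqref{transport}, \eqref{h}, \eqref{apriori3}) turn every weighted dissipation integral in Lemmas~\ref{L_bas}--\ref{L_th1}, in \eqref{E_vx'}, \eqref{E_ux} and in the $\theta_x$-estimate below into the plain $L^2$- or $H^1$-norm, with constants depending only on $\Pi_0$, $V_0$, and $H(V_0)$, so that \eqref{E_1order} results from adding those estimates up. Thus everything reduces to \eqref{upper_th}, and for that I would use the one-dimensional interpolation $\lrn\theta-1\rrn^2\lesssim\sup_{t\in[0,T]}\big(\|(\theta-1)(t)\|\,\|\theta_x(t)\|\big)$ together with $\sup_t\|(\theta-1)(t)\|\lesssim1$ from Lemma~\ref{L_th1}, which leaves only $\sup_t\|\theta_x(t)\|$ to be controlled.

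To bound $\sup_t\|\theta_x(t)\|$, the plan is to redo the $\theta_x$-energy estimate already carried out in the proof of Lemma~\ref{L_v1}: multiplying \eqref{id_theta1} by $\theta_{xx}$ and integrating over $[0,t]\times\mathbb{R}$ gives, as in the derivation of \eqref{pro2.2}, a bound of $\|\theta_x(t)\|^2+\int_0^t\int_{\mathbb{R}}\kappa\theta_{xx}^2/v$ by $1$ plus the space-time integrals of $\theta^2u_x^2/(v\kappa)$, $\kappa_v^2v_x^2\theta_x^2/(v\kappa)$, $\kappa\theta_x^2v_x^2/v^3$, $\kappa_\theta^2\theta_x^4/(v\kappa)$ and $\mu^2u_x^4/(v\kappa)$. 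The point is that these error terms are now estimated, not by the crude factor $\Xi(m_1,m_2,N)$ of Lemma~\ref{L_v1}, but by $1+\lrn\theta\rrn^{\gamma}$ for some exponent $\gamma<4$, using: the dissipation estimates \eqref{E_basic}, \eqref{E_theta1} (notably $\int_0^T\int\theta\mu u_x^2/v\lesssim1$ and $\int_0^T\int\mu u_x^2/(v\theta),\ \int_0^T\int\kappa\theta_x^2/(v\theta^2)\lesssim1$); the uniform bounds \eqref{E_vx'}, \eqref{E_ux}; the structural relations above (which give, for instance, $\kappa_v^2v_x^2/(v\kappa)\lesssim(\mu v_x/v)^2$ and $\kappa v_x^2/v^3\lesssim(\mu v_x/v)^2$, so that no upper bound on $h(v)/v$ is needed); and the Gagliardo--Nirenberg inequality $\|f\|_{L^\infty_x}^2\lesssim\|f\|\,\|f_x\|$ applied to $f=u_x,\theta_x$, with small multiples of $\int_0^t\int\kappa\theta_{xx}^2/v$ and of $\int_0^t\int\mu u_{xx}^2/v$ (controlled by \eqref{E_ux}) absorbed on the left. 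The expected outcome is $\sup_{t\in[0,T]}\|\theta_x(t)\|^2+\int_0^T\int_{\mathbb{R}}\kappa\theta_{xx}^2/v\lesssim1+\lrn\theta\rrn^{\gamma}$.

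Granting this, the interpolation inequality gives $\lrn\theta\rrn\le1+\lrn\theta-1\rrn\lesssim1+\big(\sup_t\|\theta_x(t)\|\big)^{1/2}\lesssim1+\lrn\theta\rrn^{\gamma/4}$; since $\gamma/4<1$, Young's inequality absorbs the last term and yields $\lrn\theta\rrn\le C_1$, which is \eqref{upper_th}. Then \eqref{bound_v} and \eqref{E_1order} follow as explained in the first paragraph.

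The main obstacle is precisely this $\theta_{xx}$-estimate, and within it the genuinely nonlinear terms --- above all $\int_0^t\int\mu^2u_x^4/(v\kappa)$, and to a lesser extent $\int_0^t\int\kappa_\theta^2\theta_x^4/(v\kappa)$ and the mixed terms $\int_0^t\int\kappa\theta_x^2v_x^2/v^3$, $\int_0^t\int\kappa_v^2v_x^2\theta_x^2/(v\kappa)$. Two constraints must be met at once: the resulting power of $\lrn\theta\rrn$ must stay strictly below $4$ so that the self-improvement step closes, and every constant must be independent of the a priori parameters $m_1,m_2,N$ so that $C_1,C_2,C_3$ depend only on $\Pi_0,V_0,H(V_0)$. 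The latter is the delicate point: it forbids bounding $1/\theta$ by $1/m_2$, and --- since \eqref{h} allows $h$ to grow arbitrarily fast --- it forbids any a priori upper bound for $h(v)/v$. One is therefore forced to keep the quartic velocity term in the form $\tfrac{\tilde\mu}{\tilde\kappa}\,\tfrac{\mu u_x^2}{v}\,u_x^2$ and to control $\int_0^T\int\mu u_x^2/v\lesssim1$ only through the Cauchy--Schwarz combination of $\int\theta\mu u_x^2/v$ and $\int\mu u_x^2/(v\theta)$ from Lemmas~\ref{L_bas} and \ref{L_th1}, to exploit the exact cancellation $\mu/\kappa=\tilde\mu/\tilde\kappa$ and the smallness factor $\alpha=\mu_\theta\theta/\mu=\kappa_\theta\theta/\kappa$ wherever a $\kappa_\theta$ or $\mu_\theta$ appears, and to reduce the mixed terms, via $\kappa v_x^2/v^3\lesssim(\mu v_x/v)^2$ and $\sup_t\|\mu v_x/v(t)\|\lesssim1$ from \eqref{E_vx'}, to the bound $\int_0^t\|\theta_x\|_{L^\infty_x}^2\lesssim\epsilon\int_0^t\|\theta_{xx}\|^2+C(\epsilon)\int_0^t\|\theta_x\|^2$ with $\int_0^T\|\theta_x\|^2\lesssim1$.
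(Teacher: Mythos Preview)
Your proposal is correct and follows essentially the same route as the paper: multiply \eqref{id_theta} by $\theta_{xx}$, estimate each of the five source terms using the structural identities $\mu/\kappa=\tilde\mu/\tilde\kappa$, $\mu_\theta=\alpha\mu/\theta$, the bounds \eqref{E_basic}, \eqref{E_theta1}, \eqref{E_vx'}, \eqref{E_ux} and Gagliardo--Nirenberg, to obtain $\|\theta_x(t)\|^2+\int_0^t\int\kappa\theta_{xx}^2/v\lesssim1+\lrn\theta\rrn^{\gamma}$ with $\gamma<4$ (the paper gets $\gamma=2$), then close via $\lrn\theta-1\rrn^2\lesssim\|\theta-1\|\,\|\theta_x\|$ and Young; \eqref{bound_v} and \eqref{E_1order} then follow from Lemma~\ref{L_v2} and the earlier lemmas exactly as you describe. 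Your identification of the quartic term $\int\mu^2u_x^4/(v\kappa)$ as the main obstacle, and of the constraint that no upper bound on $h(v)/v$ may be invoked, matches the paper's treatment in \eqref{pro7.4}.
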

	\begin{proof}
		Multiply \eqref{id_theta} by $\theta_{xx}$ and
		integrate the resulting identity to find
		\begin{align} \notag
			&\frac{c_v}{2}\|\theta_{x}(t)\|^2-\frac{c_v}{2}\|\theta_{0x}\|^2
			+\int_0^t\int_{\mathbb{R}}\frac{\kappa\theta_{xx}^2}{v}
			\\  &\qquad \label{E_theta_x}
			=\int_0^t\int_{\mathbb{R}}
			\left[\frac{\kappa v_x\theta_x}{v^2}\theta_{xx}
			+\frac{\theta}{v}u_x\theta_{xx}
			-\frac{\mu u_x^2}{v}\theta_{xx}
			-\frac{\kappa_x\theta_x}{v}\theta_{xx}\right].
		\end{align}
		Next we estimate each term in \eqref{E_theta_x}. First,
		\begin{align} \notag
			\left|\int_0^t\int_{\mathbb{R}}\frac{\kappa v_x\theta_x}{v^2}\theta_{xx}\right|
			&\lesssim  \int_0^t\|\theta_x\|_{L^\infty}
			\left\|\sqrt{\frac{\kappa}{v}}\theta_{xx}\right\|
			\left\|\sqrt{\frac{\kappa}{v}}\frac{v_x}{v}\right\|\\ \notag
			&\lesssim \lrn\frac{1}{\sqrt{\kappa v}}\rrn
			\int_0^t\|\theta_x\|^{\frac12}\|\theta_{xx}\|^{\frac12}
			\left\|\sqrt{\frac{\kappa}{v}}\theta_{xx}\right\|
			\left\|\frac{\kappa v_x}{v}\right\|\\
			&\lesssim  \label{pro7.1}
			\lrn{\frac{v}{\kappa}}\rrn^{\frac14}\lrn\frac{1}{{\kappa v}}\rrn^{\frac12}
			\int_0^t\|\theta_x\|^{\frac12}
			\left\|\sqrt{\frac{\kappa}{v}}\theta_{xx}\right\|^{\frac32}
			\left\|\frac{\kappa v_x}{v}\right\|.
		\end{align}
		In light of Young's inequality,
		we combine \eqref{pro7.1},
		\eqref{E_h2}, \eqref{E_theta1}, and \eqref{E_vx'} to get
		\begin{align} \notag
			\left|\int_0^t\int_{\mathbb{R}}\frac{\kappa v_x\theta_x}{v^2}\theta_{xx}\right|
			&\lesssim \epsilon\int_0^t\int_{\mathbb{R}}\frac{\kappa\theta_{xx}^2}{v}
			+C(\epsilon)
			\int_0^t\|\theta_x\|^2\left\|\frac{\kappa v_x}{v}\right\|^4\\ \notag
			&\lesssim \epsilon\int_0^t\int_{\mathbb{R}}\frac{\kappa\theta_{xx}^2}{v}
			+C(\epsilon)
			\sup_{[0, T]}\left\|\frac{\mu v_x}{v}\right\|^4
			\int_0^t\int_{\mathbb{R}}\frac{\kappa\theta_x^2}{v}\\ 	\label{pro7.1a}
			&\lesssim \epsilon\int_0^t\int_{\mathbb{R}}\frac{\kappa\theta_{xx}^2}{v}
			+C(\epsilon),
		\end{align}
		and
		\begin{align}  \notag
			\left|\int_0^t\int_{\mathbb{R}}\frac{\theta}{v}u_x\theta_{xx}\right|
			&\lesssim \lrn\theta\rrn
			\int_0^t \left\|\sqrt{\frac{\kappa}{v}}\theta_{xx}\right\|
			\left\|\frac{1}{\sqrt{\kappa v}}u_x\right\|\\  \notag
			&\lesssim \epsilon\int_0^t\int_{\mathbb{R}}\frac{\kappa\theta_{xx}^2}{v}
			+C(\epsilon)\lrn\theta\rrn^2\lrn\frac{1}{\kappa\mu}\rrn
			\int_0^t\int_{\mathbb{R}}\frac{\mu u_x^2}{v}\\ \label{pro7.3}
			&\lesssim \epsilon\int_0^t\int_{\mathbb{R}}\frac{\kappa\theta_{xx}^2}{v}
			+C(\epsilon)\lrn\theta\rrn^2.
		\end{align}
		Using H\"{o}lder's inequality, we have from \eqref{E_h2} that
		\begin{equation*}
			\begin{aligned}
				\left|\int_0^t\int_{\mathbb{R}}\frac{\mu u_x^2}{v}\theta_{xx}\right|
				&\lesssim
				\int_0^t\|u_x\|_{L^\infty}
				\left\|\sqrt{\frac{\mu}{v}}u_x\right\|
				\left\|\sqrt{\frac{\mu}{v}}\theta_{xx}\right\|
				\\
				&\lesssim \int_0^t\|u_x\|^{\frac12}\|u_{xx}\|^{\frac12}
				\left\|\sqrt{\frac{\mu}{v}}u_x\right\|
				\left\|\sqrt{\frac{\mu}{v}}\theta_{xx}\right\|
				\\
				&\lesssim
				\sup_{[0, T]}\|u_x \|
				\int_0^t
				\left\|\sqrt{\frac{\mu}{v}}u_{xx}\right\|^{\frac12}
				\left\|\sqrt{\frac{\mu}{v}}u_x\right\|^{\frac{1}{2}}
				\left\|\sqrt{\frac{\mu}{v}}\theta_{xx}\right\|,
			\end{aligned}
		\end{equation*}
		which combined with \eqref{E_basic}, \eqref{E_theta1}, and \eqref{E_ux} implies
		\begin{align} \notag
			\left|\int_0^t\int_{\mathbb{R}}\frac{\mu u_x^2}{v}\theta_{xx}\right|
			&\lesssim  \epsilon\int_0^t\int_{\mathbb{R}}\frac{\kappa\theta_{xx}^2}{v}
			+ C(\epsilon)\sup_{[0, T]}\|u_x \|^2
			\int_0^t
			\int_{\mathbb{R}}\left[\frac{\mu u_{x}^2}{v}+
			\frac{\mu u_{xx}^2}{v}\right]\\ \label{pro7.4}
			&\lesssim  \epsilon\int_0^t\int_{\mathbb{R}}\frac{\kappa\theta_{xx}^2}{v}
			+ C(\epsilon)\left(1+\lrn\theta\rrn\right)^2.
		\end{align}
		For the last term on the right-hand side of \eqref{E_theta_x},  we have
		\begin{equation*}
			\left|\int_0^t\int_{\mathbb{R}}\frac{\kappa_x\theta_x}{v}\theta_{xx}\right|
			\leq\left|\int_0^t\int_{\mathbb{R}}\frac{\kappa_\theta\theta_x^2}{v}\theta_{xx}\right|
			+\left|\int_0^t\int_{\mathbb{R}}\frac{\kappa_vv_x\theta_x}{v}\theta_{xx}\right|.
		\end{equation*}
		We obtain from \eqref{E_basic} and \eqref{apriori3} that
		\begin{align} \notag
			\left|\int_0^t\int_{\mathbb{R}}\frac{\kappa_\theta\theta_x^2}{v}\theta_{xx}\right|
			&\lesssim \epsilon\int_0^t\int_{\mathbb{R}}\frac{\kappa\theta_{xx}^2}{v}
			+C(\epsilon)\alpha^2\lrn\theta_{x}\rrn^2
			\int_0^t\int_{\mathbb{R}}\frac{\kappa\theta_{x}^2}{v\theta^2}\\
			\label{pro7.5}
			&\lesssim \epsilon\int_0^t\int_{\mathbb{R}}\frac{\kappa\theta_{xx}^2}{v}
			+C(\epsilon).
		\end{align}
		In view of \eqref{h}, \eqref{E_h1}, \eqref{E_vx'}, and \eqref{E_theta1}, we infer
		\begin{align} \notag
			\left|\int_0^t\int_{\mathbb{R}}\frac{\kappa_vv_x\theta_x}{v}\theta_{xx}\right|
			&\lesssim\lrn\frac{\kappa_v}{\mu}\sqrt{\frac{v}{\kappa}}\rrn
			\int_0^t\|\theta_x\|_{L^\infty}
			\left\|\sqrt{\frac{\kappa}{v}}\theta_{xx}\right\|
			\left\|\frac{\mu v_x}{v}\right\|\\ \notag
			&\lesssim\int_0^t\left\|\sqrt{\frac{\kappa}{v}}\theta_{xx}\right\|^{\frac32}
			\left\|\sqrt{\frac{\kappa}{v}}\theta_{x}\right\|^{\frac12}
			\left\|\frac{\mu v_x}{v}\right\|\\ \notag
			&\lesssim\epsilon\int_0^t\int_{\mathbb{R}}\frac{\kappa\theta_{xx}^2}{v}
			+C(\epsilon)\int_0^t\left\|\sqrt{\frac{\kappa}{v}}\theta_{x}\right\|^2
			\left\|\frac{\mu v_x}{v}\right\|^4\\  \label{pro7.6}
			&\lesssim  \epsilon\int_0^t\int_{\mathbb{R}}\frac{\kappa\theta_{xx}^2}{v}
			+C(\epsilon).
		\end{align}
		We plug \eqref{pro7.1a}--\eqref{pro7.6} into \eqref{E_theta_x},
		and take $\epsilon>0$ suitably small to derive
		\begin{equation} \label{E_theta_x1}
			\|\theta_{x}(t)\|^2
			+\int_0^t\int_{\mathbb{R}}\frac{\kappa\theta_{xx}^2}{v}
			\lesssim 1+\lrn\theta\rrn^2.
		\end{equation}
		Combining  \eqref{E_theta1} and \eqref{E_theta_x1} gives
		\begin{equation*}
			\lrn\theta\rrn^2=\sup_{t\in[0,T]}\left\|\theta(t)\right\|_{L^{\infty}}^2
			\leq \sup_{t\in[0,T]}
			\left\|\theta(t)\right\|\left\|\theta_x(t)\right\|
			\lesssim 1+\lrn\theta\rrn,
		\end{equation*}
		Apply Cauchy's inequality  to the last inequality to  obtain \eqref{upper_th}.
		We then derive \eqref{bound_v} by plugging \eqref{upper_th} into \eqref{bound_v00}.
		
		Insert \eqref{upper_th} into \eqref{E_ux} and \eqref{E_theta_x1} to give
		\begin{equation} \label{E_1order1}
			\|(u_x,\theta_x)(t)\|^2
			+\int_0^t\int_{\mathbb{R}}\left[\frac{\mu u_{xx}^2}{v}
			+\frac{\kappa\theta_{xx}^2}{v}\right]\lesssim 1.
		\end{equation}
		In view of \eqref{bound_v} and \eqref{apriori3},
		we can obtain \eqref{E_1order} from
		\eqref{E_basic}, \eqref{E_theta1}, \eqref{E_vx'}, and \eqref{E_1order1}.
		The proof of the lemma is finished.
	\end{proof}
	
	We present a
	local-in-time lower bound for the temperature $\theta(t,x)$
	in the following lemma.
	\begin{lemma} \label{L_lower}
		Assume that the conditions listed in Lemma \ref{L_v1} hold. Then
		there exist positive constant $C_4$
		depending
		only on $\Pi_0$,
		$V_0$, and
		$H(V_0)$
		such that
		\begin{equation} \label{lower_th}
			\inf_{\mathbb{R}}\theta(t,\cdot)\geq \frac{\inf_{\mathbb{R}}\theta(s,\cdot)}
			{C_4\inf_{\mathbb{R}}\theta(s,\cdot)(t-s)+1}
			\quad
			\textrm{for all }
			0\leq s\leq t\leq T.
		\end{equation}
	\end{lemma}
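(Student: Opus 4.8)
The plan is to run a parabolic maximum principle for the temperature equation \eqref{id_theta}. First I would rewrite \eqref{id_theta}, after completing the square in $u_x$, as
\begin{equation*}
	c_v\theta_t-\frac{\kappa}{v}\theta_{xx}-\Bigl(\frac{\kappa}{v}\Bigr)_x\theta_x
	=\frac1v\Bigl(\sqrt{\mu}\,u_x-\frac{\theta}{2\sqrt{\mu}}\Bigr)^2-\frac{\theta^2}{4\mu v}
	\ge-\frac{\theta^2}{4\mu v},
\end{equation*}
so that the only quantity to control is the upper bound of $\tfrac1{4\mu v}$. Here the previously established estimates enter: Lemma \ref{L_th2} gives $C_2\le v(t,x)\le C_2^{-1}$ and $\theta(t,x)\le C_1$; the structural condition \eqref{h} forces $h(v)$ to be bounded below by a positive constant on $[C_2,C_2^{-1}]$; and \eqref{E_key1} together with \eqref{apriori3} gives $\theta^{\alpha}+\theta^{-\alpha}\lesssim1$ on $[0,T]\times\mathbb R$, so in particular $\theta^{\pm\alpha}$ is bounded below by a positive universal constant. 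Hence $\mu=\tilde\mu h(v)\theta^\alpha$ is comparable to a constant there, $\kappa/v$ is uniformly elliptic and bounded, and there is a constant $C_4>0$, depending only on $\Pi_0$, $V_0$, $H(V_0)$, with $\tfrac1{4\mu v}\le c_vC_4$. Therefore
\begin{equation*}
	c_v\theta_t-\frac{\kappa}{v}\theta_{xx}-\Bigl(\frac{\kappa}{v}\Bigr)_x\theta_x+c_vC_4\theta^2\ge0
	\qquad\text{on }[0,T]\times\mathbb R .
\end{equation*}

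Next I would compare $\theta$ with the spatially homogeneous barrier
\begin{equation*}
	\psi(t):=\frac{\Theta_s}{C_4\Theta_s(t-s)+1},\qquad\Theta_s:=\inf_{x\in\mathbb R}\theta(s,x),
\end{equation*}
which is precisely the solution of $\psi'=-C_4\psi^2$ with $\psi(s)=\Theta_s$, so that $\psi$ satisfies the displayed parabolic relation with equality (all its spatial derivatives vanishing). Since $(\theta-1)(t,\cdot)\in H^3\hookrightarrow C^2$ tends to $0$ at $\pm\infty$, one has $\Theta_s\le1$ and hence $\psi(t)\le\Theta_s\le1$; moreover $w:=\theta-\psi$ satisfies $w(s,\cdot)\ge0$, $w(t,x)\to1-\psi(t)\ge0$ as $|x|\to\infty$, and
\begin{equation*}
	c_vw_t-\frac{\kappa}{v}w_{xx}-\Bigl(\frac{\kappa}{v}\Bigr)_xw_x+c_vC_4(\theta+\psi)\,w\ge0,
\end{equation*}
a linear parabolic inequality with bounded coefficients and nonnegative zeroth-order coefficient $c_vC_4(\theta+\psi)$. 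The maximum principle then yields $w\ge0$ on $[s,T]\times\mathbb R$, i.e. $\theta(t,x)\ge\psi(t)$, which is exactly \eqref{lower_th}.

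The step I expect to require the most care is the rigorous justification of this comparison on the unbounded domain: one must rule out a negative infimum of $w$ over $[s,T]\times\mathbb R$, which, if it occurred, would be attained at some interior point $(t_0,x_0)$ with $t_0>s$ and $x_0$ finite — finiteness of $x_0$ because $w\to1-\psi(t_0)>0$ at infinity (note $\psi(t_0)<\psi(s)\le1$ for $t_0>s$, since $\Theta_s\ge m_2>0$), and $t_0>s$ because $w(s,\cdot)\ge0$ — and there one derives the usual contradiction from $w_t\le0$, $w_x=0$, $w_{xx}\ge0$, and $w(t_0,x_0)<0$. The customary device of working with $\mathrm e^{-Kt}w$ for $K>\|c_vC_4(\theta+\psi)\|_{L^\infty([0,T]\times\mathbb R)}$, or of exhausting $\mathbb R$ by bounded intervals, makes this precise. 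Everything else is bookkeeping, and the algebraic shape of the bound in \eqref{lower_th} is exactly what the quadratic term $-C_4\theta^2$ produces through the comparison ODE $\psi'=-C_4\psi^2$.
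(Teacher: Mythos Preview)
Your argument is correct and rests on the same two ingredients as the paper's proof: completing the square in $u_x$ to isolate the source term $-\theta^2/(4\mu v)$, and invoking the bounds \eqref{bound_v} and \eqref{apriori3} to control $1/(4\mu v)$ by a constant. The only difference is in how the comparison is packaged. The paper first passes to $1/\theta$: multiplying \eqref{id_theta} by $\theta^{-2}$ gives
\[
c_v\Bigl(\frac{1}{\theta}\Bigr)_t \le \Bigl[\frac{\kappa}{v}\Bigl(\frac{1}{\theta}\Bigr)_x\Bigr]_x + C_4,
\]
so the quadratic nonlinearity becomes a constant and the barrier is simply the affine function $H(t,x)=1/\theta-C_4(t-s)$, to which the maximum principle applies with no zeroth-order term to track. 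Your route keeps $\theta$ as the unknown and compensates by solving the Riccati ODE $\psi'=-C_4\psi^2$ for the barrier; the resulting linear inequality for $w=\theta-\psi$ then carries the bounded nonnegative coefficient $c_vC_4(\theta+\psi)$. Both lead to exactly the same bound \eqref{lower_th}; the paper's substitution is marginally cleaner because it avoids the extra bookkeeping of a variable zeroth-order coefficient, while your version has the advantage of making the role of the ODE comparison explicit. Your treatment of the unbounded domain (finiteness of a putative interior minimum via the far-field behaviour $w\to1-\psi(t)\ge0$) is the same justification the paper implicitly relies on when it cites the maximum principle.
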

	\begin{proof}
		Multiply \eqref{id_theta} by $\theta^{-2}$ to have
		\begin{equation*}
			c_v\left(\frac{1}{\theta}\right)_t
			=\left[\frac{\kappa}{v}\left(\frac{1}{\theta}\right)_x\right]_x
			-\frac{2\theta\kappa}{v}\left|\left(\frac{1}{\theta}\right)_x\right|^2
			-\frac{\mu}{v\theta^2}\left[u_x-\frac{\theta}{2\mu}\right]^2
			+\frac{1}{4\mu v}.
		\end{equation*}
		In view of \eqref{apriori3} and \eqref{bound_v},
		we deduce that
		\begin{equation*}
			c_v\left(\frac{1}{\theta}\right)_t
			\leq \left[\frac{\kappa}{v}\left(\frac{1}{\theta}\right)_x\right]_x
			+C_4,
		\end{equation*}
		for some positive constant $C_4$, depending
		only on $\Pi_0$,
		$V_0$, and
		$H(V_0)$.
		
		Let $s\in[0,T]$ be fixed and define
		\begin{equation*}
			H(t,x):=\frac{1}{\theta(t,x)}-C_4(t-s).
		\end{equation*}
		Then we derive that $H$ satisfies
		\begin{equation*}
			\left\{
			\begin{aligned}
				&c_v H_t\leq \left(\frac{\kappa}{v}H_x\right)_x
				&& {\rm for }\ (t,x)\in(s,T]\times\mathbb{R},\\
				&H(s,x)=\frac{1}{\theta(s,x)}\leq \frac{1}{\inf_{\mathbb{R}}\theta(s,\cdot)}
				&& {\rm for }\ x\in\mathbb{R}.
			\end{aligned}\right.
		\end{equation*}
		Employing the maximum principle (see  \cite{E10MR2597943}),
		we infer that
		$$H(t,x)\leq \frac{1}{\inf_{\mathbb{R}}\theta(s,\cdot)} \quad
		{\rm for\ all}\ (t,x)\in[s,T]\times\mathbb{R},$$
		which implies \eqref{lower_th}. The proof is completed.
	\end{proof}
	
	\subsection{Estimates of second-order derivatives}\label{sec_2}
	In subsections
	\ref{sec_2}
	and \ref{sec_3}, to simplify the presentation,
	we introduce
	$A\lesssim_h B$ if $A\leq C_h B$
	holds uniformly for some constant $C_h$,
	depending
	only on
	$\Pi_0$,
	$V_0$, and
	$H(C_2)$
	with $C_2$ given in Lemma \ref{L_th2}.
	The letter $C(m_2)$ will be employed to denote
	some positive constant which depends only on
	$m_2$,
	$\Pi_0$,
	$V_0$, and
	$H(C_2)$.
	We note from \eqref{H} and \eqref{bound_v} that
	\begin{equation} \label{2.3H_est}
		\sup_{(t,x)\in[0,T]\times\mathbb{R}}\left|\left(h(v(t,x)), h'(v(t,x)), h''(v(t,x)),h'''(v(t,x))\right)\right|
		\leq H(C_2).
	\end{equation}

	We estimate the second-order derivatives of $(u(t,x),\theta(t,x))$  in the next lemma.
	\begin{lemma}
		\label{L_2order1}
		Assume that the conditions listed in Lemma \ref{L_v1} hold. Then
		\begin{align} \notag
			&\sup_{t\in[0,T]}\|(u_{xx},\theta_{xx})(t)\|^2
			+\int_0^T\|(u_{xxx},\theta_{xxx})(t)\|^2\mathrm{d}t\\
			&\qquad \label{E_2order1}
			\lesssim_h C(m_2)+\int_0^T\|v_{xx}(t)\|^2\mathrm{d}t
			+\sup_{t\in[0,T]}\|v_{xx}(t)\|^2.
		\end{align}
	\end{lemma}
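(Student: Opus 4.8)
The plan is to obtain the desired bounds on $\|(u_{xx},\theta_{xx})(t)\|$ by differentiating the momentum and energy equations once more in $x$ and performing $L^2$ energy estimates, using the already-established first-order bounds \eqref{E_1order} and the pointwise bounds \eqref{upper_th}--\eqref{bound_v} from Lemma \ref{L_th2}. The key observation is that $v$, $\theta$, and $v^{-1}$ are now bounded above and below by constants depending only on $\Pi_0$, $V_0$, and $H(V_0)$, so that $\mu=\tilde\mu h(v)\theta^\alpha$, $\kappa$, and all their derivatives are likewise controlled (by \eqref{2.3H_est}, since $|\alpha|\le\epsilon_0$ makes $\theta^{\pm\alpha}$ bounded), and the viscosity is uniformly parabolic. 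Thus the right-hand sides will be quadratic and cubic in the first- and second-order derivatives, and the troublesome terms involving $v_{xx}$ cannot be absorbed by the parabolic dissipation of $u$ and $\theta$ alone; they must instead be carried along as the terms $\int_0^T\|v_{xx}\|^2\,dt+\sup_{[0,T]}\|v_{xx}\|^2$ on the right-hand side, to be closed later in a separate lemma on $v_{xx}$.

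First I would derive the equation for $u_{xx}$: differentiate $\eqref{NS_L}_2$ twice in $x$, so that $u_{xxt}=\bigl(\tfrac{\mu}{v}u_{xx}\bigr)_{xx}+(\text{lower-order products})-\bigl(\tfrac{\theta}{v}\bigr)_{xxx}$, and similarly for $\theta_{xx}$ from \eqref{id_theta}. Then multiply the $u_{xx}$-equation by $u_{xx}$ (or equivalently test $\partial_x^2$ of the momentum equation against $-u_{xxxx}$, after one integration by parts) to get $\tfrac{d}{dt}\|u_{xx}\|^2+c\int\tfrac{\mu}{v}u_{xxx}^2\lesssim\text{RHS}$, and analogously $\tfrac{d}{dt}\|\theta_{xx}\|^2+c\int\tfrac{\kappa}{v}\theta_{xxx}^2\lesssim\text{RHS}$. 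On the right-hand side one encounters terms such as $\int\tfrac{\theta}{v}v_{xxx}u_{xxx}$ (from $(\theta/v)_{xxx}$), which by integration by parts becomes $-\int\bigl(\tfrac{\theta}{v}\bigr)_x v_{xx}u_{xxx}+\cdots$ and is then handled by Cauchy's inequality with a small multiple of $\int\tfrac{\mu}{v}u_{xxx}^2$ plus $C\|v_{xx}\|^2$; terms such as $\int\mu_{vv}v_x^2 u_x u_{xxx}$, $\int\mu_\theta\theta_x v_x u_x u_{xxx}$, and the cubic heat-source contributions $\int(\mu u_x^2)_{xx}\theta_{xx}$ are estimated by the Gagliardo--Nirenberg inequality $\|f_x\|_{L^\infty}\lesssim\|f_x\|^{1/2}\|f_{xx}\|^{1/2}$ together with \eqref{E_1order}, producing either absorbable dissipation or integrable-in-time quantities. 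The $\alpha$-dependent coefficients ($\mu_\theta=\alpha\mu/\theta$, etc.) only help, contributing extra factors of $|\alpha|\le\epsilon_0$.

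The main obstacle is bookkeeping the terms that genuinely involve $v_{xx}$ and cannot be absorbed. Chief among them are $\int\bigl(\tfrac{\mu}{v}\bigr)_x v_{xx}u_x\,u_{xxx}$-type contributions from expanding $\bigl(\tfrac{\mu u_x}{v}\bigr)_{xx}$, and in the temperature estimate the terms from $\bigl(\tfrac{\kappa\theta_x}{v}\bigr)_{xx}$ carrying one $v_{xx}$ against $\theta_{xxx}$. After splitting off a small part of $\int\tfrac{\mu}{v}u_{xxx}^2+\int\tfrac{\kappa}{v}\theta_{xxx}^2$, these leave behind $\|v_{xx}\|^2$ with an $L^\infty_x$-factor like $\|u_x\|_{L^\infty}^2$ or $\|\theta_x\|_{L^\infty}^2$; using the interpolation bound and \eqref{E_1order}, the time integral of such factors is finite, so they feed into the $\int_0^T\|v_{xx}\|^2\,dt$ term, while the worst instantaneous term contributes $\sup_{[0,T]}\|v_{xx}\|^2$. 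Finally I would apply Grönwall's inequality (the Grönwall factor being controlled by \eqref{E_1order}) and integrate in time to reach \eqref{E_2order1}; the constant is of type $\lesssim_h$ because it now depends on $H(C_2)$ through \eqref{2.3H_est}, and the $C(m_2)$ appears only through the initial data of the higher derivatives, which by \eqref{thm1a} is controlled by $\Pi_0$ (hence in fact one may absorb $C(m_2)$ into the $\lesssim_h$ constant, but we keep it for uniformity with the subsequent lemmas).
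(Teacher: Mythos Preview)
Your overall strategy matches the paper's: differentiate the momentum and energy equations once in $x$, test against $u_{xxx}$ and $\theta_{xxx}$ respectively (this is algebraically identical to your ``differentiate twice, multiply by $u_{xx}$'' after one integration by parts), and estimate the commutator terms coming from $\bigl(\frac{\mu u_x}{v}\bigr)_{xx}-\frac{\mu}{v}u_{xxx}$, $P_{xx}$, and their $\theta$-analogues.  The paper groups the right-hand side exactly as you anticipate, and the term $\int_0^t\int v_{xx}^2u_x^2\le \sup_{[0,t]}\|v_{xx}\|^2\int_0^t\|u_x\|_1^2$ is precisely where $\sup\|v_{xx}\|^2$ enters.

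Two minor discrepancies are worth noting.  First, the paper does \emph{not} invoke Gr\"onwall here: the cross terms $\int |(u_{xx},\theta_{xx})|^2|(v_x,u_x,\theta_x)|^2$ are handled instead by the interpolation $\|f_{xx}\|_{L^\infty}^2\lesssim\|f_{xx}\|\|f_{xxx}\|$, Young's inequality, and absorption of $\delta\int\|(u_{xxx},\theta_{xxx})\|^2$ into the dissipation, yielding a direct estimate.  Your Gr\"onwall route would also close, but is slightly less sharp and unnecessary.  Second, your remark that $C(m_2)$ could be absorbed into the $\lesssim_h$ constant is not correct: the $m_2$-dependence enters genuinely through the bound $\int_0^T\|v_x\|^2\,dt\le m_2^{-1}\int_0^T\|\sqrt{\theta}\,v_x\|^2\,dt\lesssim m_2^{-1}$, since \eqref{E_1order} only controls $\sqrt{\theta}\,v_x$ in $L^2_tL^2_x$, not $v_x$ itself.
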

	\begin{proof} The proof is divided into the following steps:
		
		\noindent   {\bf Step\,1}.
		Differentiating  $\eqref{NS_L}_2$ with repect to $x$,
		and multiplying the resulting identity by $u_{xxx}$ give
		\begin{equation*}
			\left[\frac12u_{xx}^2\right]_t
			-\left[u_{xt}u_{xx}\right]_x
			+\frac{\mu u_{xxx}^2}{v}
			=P_{xx}u_{xxx}
			+\left[\frac{\mu u_{xxx}}{v}-\left(\frac{\mu u_x}{v}\right)_{xx}\right]u_{xxx}.
		\end{equation*}
		Integrate the above identity over $[0,t]\times\mathbb{R}$,
		and use \eqref{bound_v}, \eqref{apriori3}, and Cauchy's inequality to obtain
		\begin{equation}\label{E_uxx1}
			\|u_{xx}(t)\|^2+\int_0^t\int_{\mathbb{R}}u_{xxx}^2
			\lesssim 1+
			\int_0^t\int_{\mathbb{R}}\left|P_{xx}\right|^2
			+\int_0^t\int_{\mathbb{R}}
			\left|\frac{\mu u_{xxx}}{v}-\left(\frac{\mu u_x}{v}\right)_{xx}\right|^2.
		\end{equation}
		We next make the estimates for the terms
		on the right-hand side of \eqref{E_uxx1}.
		In light of \eqref{bound_v},
		we deduce for general smooth function $f(v)$ that
		\begin{equation}\label{E_f}
			\left\{
			\begin{aligned}
				|f(v)_x|&\lesssim |f'(v)| |v_x|,\\
				|f(v)_{xx}|&\lesssim
				|(f',f'')(v)|(|v_{xx}|+v_x^2),\\
				|f(v)_{xxx}|&\lesssim
				|(f',f'',f''')(v)|(|v_{xxx}|+|v_{xx}|v_x^2 +|v_x|^3).
			\end{aligned} \right.
		\end{equation}
		Hence by using \eqref{upper_th} and
		\begin{equation} \label{id_Pxx}
			P_{xx}=\frac{\theta_{xx}}{v}+2\theta_x\left(\frac{ 1}{v}\right)_x
			+\theta \left(\frac{ 1}{v}\right)_{xx},
		\end{equation}
		we infer
		\begin{equation*}
			|P_{xx}|^2\lesssim
			|(v_{xx},\theta_{xx})|^2+|(v_{x},\theta_{x})|^4.
		\end{equation*}
		From \eqref{E_1order} and \eqref{apriori1}, we have
		\begin{equation*}
			\int_0^T\|v_x \|^2 \leq C(m_2),
		\end{equation*}
		which combined with \eqref{E_1order} implies
		\begin{align} \notag
			\int_0^t\int_{\mathbb{R}}
			|(v_x,u_x,\theta_x)|^4
			\lesssim&~\int_0^t\|(v_{xx},u_{xx},\theta_{xx})\|\|(v_x,u_x,\theta_x)\|^3
			\\ \notag
			\lesssim&~
			\sup_{ [0,T]}
			\|(v_x,u_x,\theta_x) \|^2
			\int_0^t\|(v_x,u_x,\theta_x)\|_1^2
			\\ \label{E_uxx1a}
			\lesssim&~C(m_2)+\int_0^t\|v_{xx} \|^2.
		\end{align}
		Consequently, we have
		\begin{equation}\label{E_uxx1b}
			\int_0^t\int_{\mathbb{R}}\left|P_{xx}\right|^2
			\lesssim
			\int_0^t\|v_{xx} \|^2+C(m_2).
		\end{equation}
		
		To estimate the last term in \eqref{E_uxx1},
		we first make some estimate of $\theta^{\alpha}$.
		It follows from \eqref{apriori3} that
		\begin{equation}\label{E_a}
			\left\{
			\begin{aligned}
				\left|\left(\theta^{\alpha}\right)_x\right|
				&\lesssim |\theta_x|,\\
				\left|\left(\theta^{\alpha}\right)_{xx}\right|
				&\lesssim |\theta_{xx}|+\theta_x^2,\\
				\left|\left(\theta^{\alpha}\right)_{xxx}\right|
				&\lesssim |\theta_{xxx}|+|\theta_{xx}|\theta_x^2 +|\theta_x|^3,
			\end{aligned} \right.
		\end{equation}
		which combined with \eqref{E_f} yields
		\begin{equation}\label{E_fa} \left\{
			\begin{aligned}
				|\left(f(v)\theta^{\alpha}\right)_x|
				\lesssim~&
				|(f,f')(v)| |(v_x,\theta_x)|,\\
				|\left(f(v)\theta^{\alpha}\right)_{xx}|
				\lesssim~&
				|(f,f',f'')(v)|\big[|(v_{xx},\theta_{xx})|+|(v_x,\theta_x)|^2\big],\\
				|\left(f(v)\theta^{\alpha}\right)_{xxx}|
				\lesssim~&
				|(f,f',f'',f''')(v)| \big[|(v_{xxx},\theta_{xxx})|\\
				&+|(v_{xx},\theta_{xx})|
				|(v_x,\theta_x)|^2 +|(v_x,\theta_x)|^3\big].
			\end{aligned} \right.
		\end{equation}
		Taking $f(v)=h(v)/v$,
		we can combine the identity
		\begin{equation*}
			\left(\frac{\mu u_x}{v}\right)_{xx}=
			\left(\frac{\mu }{v}\right)_{xx}u_x
			+2\left(\frac{\mu }{v}\right)_{x} u_{xx}
			+\frac{\mu }{v}u_{xxx}
		\end{equation*}
		and \eqref{2.3H_est}  to conclude
		\begin{equation*}
			\left|\left(\frac{\mu u_x}{v}\right)_{xx}-\frac{\mu u_{xxx}}{v}\right|
			\lesssim_h
			|(v_x,u_x,\theta_x)|^3
			+|v_{xx}||u_x|
			+|(u_{xx},\theta_{xx})||(v_x,u_x,\theta_x)|.
		\end{equation*}
		From this estimate, we derive
		\begin{align} \notag
			\int_0^t\int_{\mathbb{R}}\left|\frac{\mu u_{xxx}}{v}
			-\left(\frac{\mu u_x}{v}\right)_{xx}\right|^2
			\lesssim_h&
			\int_0^t\int_{\mathbb{R}}
			|(v_x,u_x,\theta_x)|^6
			+\int_0^t\int_{\mathbb{R}}
			v_{xx}^2u_x^2\\ \label{E_uxx1c}
			&
			+ \int_0^t\int_{\mathbb{R}}
			|(u_{xx},\theta_{xx})|^2|(v_x,u_x,\theta_x)|^2.
		\end{align}
		Employ Sobolev's inequality  and \eqref{E_1order} to get
		\begin{align} \notag
			\int_0^t\int_{\mathbb{R}}
			|(v_x,u_x,\theta_x)|^6
			\lesssim&~\int_0^t \|(v_x,u_x,\theta_x)\|_{L^\infty}^4
			\|(v_x,u_x,\theta_x)\|^2\\ \notag
			\lesssim&~\int_0^t\|(v_{xx},u_{xx},\theta_{xx})\|^2\|(v_x,u_x,\theta_x)\|^4
			\\ \label{E_uxx1d}
			\lesssim&~ 1+\int_0^t\|v_{xx}\|^2,
			\\ \label{E_uxx1e}
			\int_0^t\int_{\mathbb{R}}
			v_{xx}^2u_x^2
			\leq  &~ \sup_{ [0,t]}\|v_{xx} \|^2
			\int_0^t    \|u_x \|_1^2
			\leq   \sup_{ [0,t]}\|v_{xx} \|^2,
		\end{align}
		and 
		\begin{align} \notag
			\int_0^t\int_{\mathbb{R}}
			|(u_{xx},\theta_{xx})|^2|(v_x,u_x,\theta_x)|^2
			\lesssim~&
			\int_0^t\|(u_{xx},\theta_{xx}) \|_{L^\infty}^2
			\|(v_x,u_x,\theta_x) \|^2\\ \notag
			\lesssim~&
			\int_0^t \|(u_{xx},\theta_{xx}) \|
			\|(u_{xxx},\theta_{xxx}) \|\\ \label{E_uxx1f}
			\lesssim~& C(\delta)+\delta
			\int_0^t\|(u_{xxx},\theta_{xxx}) \|^2.
		\end{align}
		We plug \eqref{E_uxx1b} and \eqref{E_uxx1c} into \eqref{E_uxx1},
		and use \eqref{E_uxx1d}--\eqref{E_uxx1f} to have
		\begin{align} \notag
			&\|u_{xx}(t)\|^2+\int_0^t\int_{\mathbb{R}}u_{xxx}^2\\ 
			\label{E_uxx2} &\qquad
			\lesssim_h  C(m_2)+\int_0^t\|v_{xx} \|^2
			+\sup_{[0,t]}\|v_{xx} \|^2
			+\delta
			\int_0^t\|(u_{xxx},\theta_{xxx}) \|^2.
		\end{align}
		
		\noindent{\bf Step 2}. Next,
		we differentiate \eqref{id_theta} with repect to $x$
		and multiply the result by \,$\theta_{xxx}$ to find
		\begin{equation*}
			\begin{aligned}
				&\left[\frac{c_v}{2}\theta_{xx}^2\right]_t
				-\left[c_v\theta_{xt}\theta_{xx}\right]_x
				+\frac{\kappa\theta_{xxx}^2}{v}\\
				&\qquad =
				(Pu_x)_x\theta_{xxx}-\left(\frac{\mu u_x^2}{v}\right)_{x}\theta_{xxx}
				+\left[\frac{\kappa\theta_{xxx}}{v}
				-\left(\frac{\kappa\theta_x}{v}\right)_{xx}\right]\theta_{xxx}.
			\end{aligned}
		\end{equation*}
		Integrating this last identity over $[0,t]\times\mathbb{R}$,
		we obtain from Cauchy's inequality, \eqref{bound_v} and \eqref{apriori3} that
		\begin{align} \notag
			\|\theta_{xx}(t)\|^2+\int_0^t\int_{\mathbb{R}}\theta_{xxx}^2
			\lesssim ~&1+
			\int_0^t\int_{\mathbb{R}}\left|(Pu_x)_x\right|^2
			+\int_0^t\int_{\mathbb{R}}\left|\left(\frac{\mu u_x^2}{v}\right)_{x}
			\right|^2\\ \label{E_th_xx1}
			&+\int_0^t\int_{\mathbb{R}}\left|\frac{\kappa\theta_{xxx}}{v}
			-\left(\frac{\kappa\theta_x}{v}\right)_{xx}\right|^2.
		\end{align}
		We estimate the terms on the right-hand side of \eqref{E_th_xx1} below.
		First it follows from \eqref{upper_th} and \eqref{bound_v} that
		\begin{equation*}
			\begin{aligned}
				\left|(Pu_x)_x\right|
				\lesssim|P_xu_x|+|Pu_{xx}|
				\lesssim |(v_x,u_x,\theta_x)|^2+|u_{xx}|,
			\end{aligned}
		\end{equation*}
		which along with \eqref{E_1order} and \eqref{E_uxx1a} implies
		\begin{equation}\label{E_th_xx1a}
			\int_0^t\int_{\mathbb{R}}\left|(Pu_x)_x\right|^2
			\lesssim
			\int_0^t\int_{\mathbb{R}}\left[
			|(v_x,u_x,\theta_x)|^4+u_{xx}^2\right]
			\lesssim
			C( m_2)+ \int_0^t\|v_{xx} \|^2.
		\end{equation}
		We deduce from \eqref{E_fa} with $f(v)=h(v)/v$ that
		\begin{equation*}
			\left|\left(\frac{\mu u_x^2}{v}\right)_x\right|
			=\left|\left(\frac{\mu}{v}\right)_x u_x^2+\frac{2\mu u_xu_{xx}}{v}\right|
			\lesssim_h |(v_x,u_x,\theta_x)|^3+|u_x u_{xx}|.
		\end{equation*}
		In light of \eqref{E_uxx1d} and \eqref{E_uxx1f}, we have
		\begin{align} \notag
			\int_0^t\int_{\mathbb{R}} \left|\left(\frac{\mu u_x^2}{v}\right)_x\right|^2
			\lesssim_h\,& \int_0^t\int_{\mathbb{R}}
			\left[|(v_x,u_x,\theta_x)|^6+u_x ^2u_{xx}^2\right]\\ \label{E_th_xx1b}
			\lesssim_h\,&
			C(\delta)+\int_0^t\|v_{xx}\|^2
			+\delta\int_0^t\|(u_{xxx},\theta_{xxx}) \|^2.
		\end{align}
		For the last term in \eqref{E_th_xx1},
		we deduce by applying the argument in Step 1 that
		\begin{align} \notag
			&\int_0^t\int_{\mathbb{R}}\left|\frac{\kappa\theta_{xxx}}{v}
			-\left(\frac{\kappa\theta_x}{v}\right)_{xx}\right|^2\\
			&\qquad
			\lesssim_h
			C( \delta)+ \int_0^t\|v_{xx}\|^2
			+ \sup_{ [0,t]}\|v_{xx} \|^2
			+ \delta
			\int_0^t\|(u_{xxx},\theta_{xxx} )\|^2. \label{E_th_xx1c}
		\end{align}
		Plug \eqref{E_th_xx1a}-\eqref{E_th_xx1c} into \eqref{E_th_xx1} to get
		\begin{align} \notag
			&\|\theta_{xx}(t)\|^2+\int_0^t \int_{\mathbb{R}}\theta_{xxx}^2\\
			&\qquad
			\lesssim_h C(m_2)+\int_0^t\|v_{xx} \|^2
			+\sup_{ [0,t]}\|v_{xx} \|^2
			+\delta
			\int_0^t\|(u_{xxx},\theta_{xxx}) \|^2. \label{E_th_xx2}
		\end{align}
		Combining \eqref{E_uxx2} and \eqref{E_th_xx2},
		we take $\delta$ small enough to prove \eqref{E_2order1}.
		This completes the proof.
	\end{proof}
	We next obtain a $m_2$-dependent bound for the
	second-order derivatives with respect to $x$ of the solution $(v(t,x),$ $u(t,x), \theta(t,x))$.
	\begin{lemma}
		\label{L_2order}
		Assume that the conditions listed in Lemma \ref{L_v1} hold. Then
		\begin{equation}\label{E_2order}
			\sup_{t\in[0,T]}\|(v_{xx},u_{xx},\theta_{xx})(t)\|^2
			+\int_0^T\|(v_{xx},u_{xxx},\theta_{xxx})(t)\|^2\mathrm{d}t
			\leq C(m_2).
		\end{equation}
	\end{lemma}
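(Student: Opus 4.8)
The plan is to reduce the estimate to a bound on $v_{xx}$: once we show that $\sup_{t\in[0,T]}\|v_{xx}(t)\|^2+\int_0^T\|v_{xx}(t)\|^2\,\mathrm{d}t\leq C(m_2)$, estimate \eqref{E_2order} follows at once by inserting it into \eqref{E_2order1}. To obtain this $v_{xx}$-bound I would differentiate the identity \eqref{id1} with respect to $x$ and carry out an $L^2$-energy estimate for $\Psi:=(\mu v_x/v)_x$. Since $\Psi=\frac{\mu}{v}v_{xx}+v_x(\mu/v)_x$, by \eqref{2.3H_est}, \eqref{bound_v} and \eqref{E_1order} it suffices to control $\sup_{t\in[0,T]}\|\Psi(t)\|$ together with the dissipation integral $\int_0^t\int_{\mathbb{R}}\frac{\mu\theta v_{xx}^2}{v^3}$; the latter dominates $m_2\int_0^t\|v_{xx}\|^2$ because $\mu\theta/v^3\gtrsim m_2$ on $[0,T]\times\mathbb{R}$ by \eqref{upper_th} and \eqref{bound_v}, which is precisely where the $m_2$-dependence of the final bound originates.

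Differentiating \eqref{id1} yields $\Psi_t=u_{xt}+P_{xx}+[\frac{\mu_\theta}{v}(v_x\theta_t-\theta_x u_x)]_x$, the last term abbreviated $[\cdots]_x$. Multiplying by $\Psi$ and integrating over $[0,t]\times\mathbb{R}$, the dissipative integral $\int_0^t\int_{\mathbb{R}}\frac{\mu\theta v_{xx}^2}{v^3}$ arises from pairing the summand $-\frac{\theta v_{xx}}{v^2}$ of $P_{xx}$ (see \eqref{id_Pxx}) with the leading part $\frac{\mu}{v}v_{xx}$ of $\Psi$ --- the same ``pressure'' mechanism that produced the first-order dissipation in Lemma \ref{L_v1}. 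It is essential to keep $u_{xt}$ as it stands, rather than substituting $u_{xt}=(\mu u_x/v)_{xx}-P_{xx}$ (which follows by differentiating the momentum equation in \eqref{NS_L}), for that would cancel $P_{xx}$ and destroy the dissipation; instead I would integrate $\int_0^t\int_{\mathbb{R}}\Psi u_{xt}$ by parts in $t$, using $\Psi_t=u_{xt}+P_{xx}+[\cdots]_x$ to write $\Psi u_{xt}=\partial_t\big(\Psi u_x-\tfrac12 u_x^2\big)-P_{xx}u_x-[\cdots]_x u_x$. The time-$t$ boundary term $\int_{\mathbb{R}}\Psi u_x\,\mathrm{d}x$ is absorbed by Young's inequality into a fraction of $\|\Psi(t)\|^2$ plus $\|u_x(t)\|^2$, which is bounded by Lemma \ref{L_th2}, while $-\int_0^t\int_{\mathbb{R}}P_{xx}u_x=-\int_0^t\int_{\mathbb{R}}P_x u_{xx}$ is controlled by \eqref{E_1order}.

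It then remains to estimate $\int_0^t\int_{\mathbb{R}}\Psi$ against the non-dissipative parts of $P_{xx}$ (the term $\frac{\theta_{xx}}{v}$ and the $\theta_x v_x$- and $v_x^2$-type products), the cross term $v_x(\mu/v)_x\cdot\frac{\theta v_{xx}}{v^2}$, and the terms $[\cdots]_x\Psi$ and $[\cdots]_x u_x$. Products of first-order derivatives are handled by Sobolev interpolation exactly as in \eqref{E_uxx1a}, contributing $\lesssim C(m_2)+\int_0^t\|v_{xx}\|^2$; the quadratic terms paired with $v_{xx}$ and the term $\int\Psi\frac{\theta_{xx}}{v}$ are, after being written against the matching weight $\sqrt{\mu\theta}\,v^{-3/2}$, absorbed into a small multiple of $\int_0^t\int_{\mathbb{R}}\frac{\mu\theta v_{xx}^2}{v^3}$, with remainder dominated by $\int_0^t\int_{\mathbb{R}}\frac{\kappa\theta_{xx}^2}{v}\lesssim 1$ (from \eqref{E_1order1}) and by \eqref{E_uxx1a}.

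The main obstacle, as expected, is the group of $\mu_\theta$-terms. Writing $\mu_\theta=\alpha\mu/\theta$, expanding $[\cdots]_x$, and then using \eqref{id_theta1} and \eqref{pro2.1} to express $\theta_t$ and its $x$-derivative, one sees that these terms involve $\theta_{xxx}$ and $u_{xxx}$, so they are not controlled by second-order quantities alone. The point is that each of them carries the small factor $|\alpha|$, so that after Young's inequality and the smallness condition \eqref{apriori3} their genuinely third-order pieces are absorbed into $\delta\int_0^t\|(u_{xxx},\theta_{xxx})\|^2$, which by Lemma \ref{L_2order1} is in turn $\lesssim_h C(m_2)+\delta\big(\int_0^t\|v_{xx}\|^2+\sup_{[0,t]}\|v_{xx}\|^2\big)$ and hence absorbed back into the left-hand side --- all intermediate quantities being finite since $(v,u,\theta)\in X(0,T;m_1,m_2,N)$ forces $\mathcal{E}(0,T)\leq N^2$. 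Assembling these estimates and choosing $\epsilon$, $\delta$ and then $\epsilon_1$ small enough gives $\sup_{[0,T]}\|\Psi\|^2+\int_0^T\int_{\mathbb{R}}\frac{\mu\theta v_{xx}^2}{v^3}\leq C(m_2)$, hence the $v_{xx}$-bound above; inserting it into \eqref{E_2order1} completes the proof of \eqref{E_2order}.
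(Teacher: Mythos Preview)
Your strategy is the paper's: differentiate the Kanel$'$ identity \eqref{id1}, test with $\Psi=(\mu v_x/v)_x$, and extract dissipation from the $-\theta v_{xx}/v^2$ piece of $P_{xx}$. Your treatment of $u_{xt}$ (integrate in $t$, then substitute $\Psi_t$) is a legitimate variant of the paper's (the paper instead integrates $u_x\Psi_t=u_x\Phi_{xt}$ by parts in $x$ to land on $u_{xx}(\mu v_x/v)_t$), and your idea of closing the $\mu_\theta$-terms through Lemma~\ref{L_2order1} is a valid alternative to what the paper does --- the paper never feeds the third-order pieces back through \eqref{E_2order1} but simply bounds $\int_0^t\|\theta_{xxx}\|^2$ by the a~priori quantity $N^2$ and kills the whole block with the smallness in \eqref{apriori3}.

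There is, however, a closure gap. You estimate the first-order quartic terms ``exactly as in \eqref{E_uxx1a}'', which yields a contribution $\lesssim C(m_2)+\int_0^t\|v_{xx}\|^2$ with an $O(1)$ constant in front of $\int_0^t\|v_{xx}\|^2$. But your dissipation only gives $\int_0^t\int\frac{\mu\theta v_{xx}^2}{v^3}\gtrsim m_2\int_0^t\|v_{xx}\|^2$, and $m_2\le 1$; none of $\epsilon$, $\delta$, or $\epsilon_1$ touches that $O(1)$ coefficient, so the absorption you claim in the last sentence does not go through. The paper avoids this by \emph{not} using \eqref{E_uxx1a} at this step: it keeps the quartic bound in the Gronwall-ready form
\[
\int_0^t\int_{\mathbb{R}}|(\theta_x,v_x)|^4
\;\lesssim\; C(m_2)+\int_0^t\|(\theta_x,v_x)(s)\|^2\,\|v_{xx}(s)\|^2\,\mathrm{d}s,
\]
arrives at $\|v_{xx}(t)\|^2+\int_0^t\|v_{xx}\|^2\lesssim_h C(m_2)+C(m_2)\int_0^t\|(\theta_x,v_x)\|^2\|v_{xx}\|^2$, and closes with Gronwall using $\int_0^T\|(\theta_x,v_x)\|^2\le C(m_2)$ from \eqref{E_1order}. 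You need either this Gronwall step, or a sharper interpolation (e.g.\ $\int_0^t\int v_x^4\le\epsilon\int_0^t\|v_{xx}\|^2+C(\epsilon)\sup_t\|v_x\|^4\int_0^t\|v_x\|^2$) that produces a genuinely small coefficient on $\int_0^t\|v_{xx}\|^2$.
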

	\begin{proof}
		Differentiate \eqref{id1} with respect to $x$
		and multiply the result by $\left(\frac{\mu v_x}{v}\right)_x$ to find
		\begin{equation*}
			\begin{aligned}
				&\left[\frac12\left(\frac{\mu v_x}{v}\right)_x^2\right]_t
				-\left[u_x\left(\frac{\mu v_x}{v}\right)_x\right]_t
				+\left[u_x\left(\frac{\mu v_x}{v}\right)_t\right]_x\\
				&\qquad = u_{xx}\left(\frac{\mu v_x}{v}\right)_t
				+\left(\frac{\theta}{v}\right)
				_{xx}\left(\frac{\mu v_x}{v}\right)_x
				+\left(\frac{\mu v_x}{v}\right)_x
				\left[\frac{\mu_\theta}{v}(v_x\theta_t-\theta_xu_x)\right]_x.
			\end{aligned}
		\end{equation*}
		We integrate the above identity over $[0,t]\times\mathbb{R}$
		and use Cauchy's inequality to derive
		\begin{align} \notag
			\left\|\left(\frac{\mu v_x}{v}\right)_x(t)\right\|^2
			\lesssim_h\,& 1
			+\int_0^t\int_{\mathbb{R}}
			u_{xx}\left(\frac{\mu v_x}{v}\right)_t
			+\int_0^t\int_{\mathbb{R}}\left(\frac{\theta}{v}\right)
			_{xx}\left(\frac{\mu v_x}{v}\right)_x\\
			&+\int_0^t\int_{\mathbb{R}}\left(\frac{\mu v_x}{v}\right)_x
			\left[\frac{\mu_\theta}{v}(v_x\theta_t-\theta_xu_x)\right]_x
			. \label{E_vxx1}
		\end{align}
		It follows from \eqref{pro2.1}, \eqref{apriori3}, and
		\begin{equation*}
			\begin{aligned}
				\left(\frac{\mu v_x}{v}\right)_t
				=\frac{v\mu_v-\mu}{v^2}v_xu_x+\frac{\mu_\theta\theta_tv_x}{v}
				+\frac{\mu u_{xx}}{v}
			\end{aligned}
		\end{equation*}
		that
		\begin{equation*}
			\begin{aligned}
				\left|\left(\frac{\mu v_x}{v}\right)_t\right|
				\lesssim_h |v_xu_x|+N m_2^{-1}|\alpha||v_x|
				+| u_{xx}|.
			\end{aligned}
		\end{equation*}
		We then deduce from Cauchy's inequality and \eqref{E_1order} that
		\begin{equation}\label{E_vxx1a}
			\begin{aligned}
				\int_0^t\int_{\mathbb{R}}u_{xx}\left(\frac{\mu v_x}{v}\right)_t
				\lesssim C(m_2).
			\end{aligned}
		\end{equation}
		In view of  \eqref{id_Pxx} and
		\begin{equation}\label{id3}
			\left(\frac{\mu v_x}{v}\right)_x
			=\frac{\mu }{v}v_{xx}
			+\left(\frac{\mu }{v}\right)_xv_x,
		\end{equation}
		we have
		\begin{align} \notag
			&	\int_0^t\int_{\mathbb{R}}\left(\frac{\theta}{v}\right)
			_{xx}\left(\frac{\mu v_x}{v}\right)_x\\
			&\qquad	\leq-\int_0^t\int_{\mathbb{R}}\frac{\theta\mu v_{xx}^2}{2v^3}
			+C(m_2)\int_0^t\int_{\mathbb{R}}
			\left[\theta_{xx}^2+\left|(\theta_x,v_x)\right|^4\right]. \label{E_vxx1b}
		\end{align}
		Apply Sobolev's inequality to get
		\begin{equation*}
			\begin{split}
				\int_0^t\int_{\mathbb{R}}\left|(\theta_x,v_x)\right|^4
				\lesssim~&
				\int_0^t
				\left\|(\theta_x,v_x) \right\|^2\left\|(\theta_x,v_x) \right\|_1^2\\
				\lesssim~& C(m_2)+
				\int_0^t
				\left\|(\theta_x,v_x) \right\|^2\left\| v_{xx} \right\|^2.
			\end{split}
		\end{equation*}
		Inserting the last inequality and
		\eqref{E_1order} into \eqref{E_vxx1b}, we infer
		\begin{align} \notag
			&\int_0^t\int_{\mathbb{R}}\left(\frac{\theta}{v}\right)_{xx}
			\left(\frac{\mu v_x}{v}\right)_x\\
			&\qquad
			\leq-\int_0^t\int_{\mathbb{R}}\frac{\theta\mu v_{xx}^2}{2v^3}
			+C(m_2)
			+C(m_2)\int_0^t
			\left\|(\theta_x,v_x) \right\|^2\left\| v_{xx} \right\|^2. \label{E_vxx1c}
		\end{align}
		
		For the last term in \eqref{E_vxx1},
		we use \eqref{id3},
		\begin{equation*}
			\begin{aligned}
				\left[\frac{\mu_\theta}{v}(v_x\theta_t-\theta_xu_x)\right]_x
				=~&\frac{\mu_\theta}{v}\left(v_{xx}\theta_t+v_x\theta_{xt}
				-\theta_{xx}u_x-\theta_xu_{xx}\right)\\
				&+\left[\frac{\mu_{\theta\theta}\theta_x
					+\mu_{\theta v}v_x}{v}-\frac{\mu_{\theta}v_x}{v^2}\right]
				\left(v_x\theta_t-\theta_xu_x\right),
			\end{aligned}
		\end{equation*}
		and \eqref{apriori3} to derive
		\begin{align*}
			\left|\left(\frac{\mu v_x}{v}\right)_x\right|
			\lesssim_h\,&|v_{xx}|+|(v_x,\theta_x)|^2,
			\\
			\left|\left[\frac{\mu_\theta}{v}(v_x\theta_t-\theta_xu_x)\right]_x\right|
			\lesssim_h\,&
			|\alpha|(|\theta_t||v_{xx}|+|\theta_{xt}||v_x|
			+|\theta_{xx}||u_x|+|\theta_x||u_{xx}|)
			\\
			&+|\alpha||(v_x,\theta_x)|(|v_x\theta_t|+|\theta_xu_x|).
		\end{align*}
		It follows from the identity
		\begin{equation*}
			c_v\theta_{tx}=\left(\frac{\mu u_x^2}{v}\right)_x
			+\left(\frac{\kappa\theta_x}{v}\right)_{xx}
			-\left(\frac{\theta}{v}u_x\right)_x
		\end{equation*}
		and \eqref{upper_th}--\eqref{bound_v} that
		\begin{equation*}
			\begin{aligned}
				|\theta_{tx}|\leq~&
				|\theta_{xxx}|+|(v_x,\theta_x)||\theta_{xx}|+(1+|u_x|)|u_{xx}|\\
				&+|\theta_x||v_{xx}|+(1+|(u_x,\theta_x)|)|(v_x,u_x,\theta_x)|^2.
			\end{aligned}
		\end{equation*}
		Hence applying Cauchy's inequality yields
		\begin{align} \notag
			&\int_0^t\int_{\mathbb{R}}\left|\left(\frac{\mu v_x}{v}\right)_x
			\left[\frac{\mu_\theta}{v}(v_x\theta_t-\theta_xu_x)\right]_x\right|\\ \notag
			\lesssim_h\,&
			\left(\epsilon+C(\epsilon)\alpha^2\lrn(\theta_t,\theta_x v_x)\rrn^2
			\right)\int_0^t\int_{\mathbb{R}}v_{xx}^2
			+C(\epsilon)\alpha^2\lrn v_x\rrn^2\int_0^t\int_{\mathbb{R}}\theta_{xxx}^2
			\\ \notag
			&+C(\epsilon)\int_0^t\int_{\mathbb{R}} \alpha^2|(v_x,\theta_x)|^2
			|(v_x\theta_t,\theta_xu_x)|^2 
			+C(\epsilon)\int_0^t\int_{\mathbb{R}}|(v_x,\theta_x)|^4  \\ \notag
			&+C(\epsilon)\alpha^2\int_0^t\int_{\mathbb{R}}
			\left[|(v_x,u_x,\theta_x)|^4\theta_{xx}^2
			+(1+|(u_x,\theta_x)|^2)|(v_x,u_x,\theta_x)|^6\right]\\ \notag
			&+C(\epsilon) \alpha^2\int_0^t\int_{\mathbb{R}}
			\left[(1+|u_x|^2)u_{xx}^2v_x^2+\theta_x^2u_{xx}^2\right]\\ \label{E_vxx1d}
			\lesssim_h\,&\epsilon\int_0^t\int_{\mathbb{R}}v_{xx}^2
			+C(\epsilon,m_2).
		\end{align}
		Here we have use \eqref{apriori3} and
		\begin{equation*}
			\int_0^t\int_{\mathbb{R}}\theta_{xxx}^2\lesssim N^2.
		\end{equation*}
		Plug \eqref{E_vxx1a}, \eqref{E_vxx1c}, and \eqref{E_vxx1d}
		into \eqref{E_vxx1} to deduce
		\begin{equation*}
			\begin{aligned}
				\|v_{xx}(t)\|^2+\int_0^t \|v_{xx} \|^2
				\lesssim_h C(m_2)+
				\int_0^t
				\left\|(\theta_x,v_x) \right\|^2\left\| v_{xx} \right\|^2.
			\end{aligned}
		\end{equation*}
		We apply Gronwall's inequality to the above estimate to obtain
		\begin{equation*}
			\begin{aligned}
				\|v_{xx}(t)\|^2+\int_0^t \|v_{xx} \|^2
				\lesssim C(m_2),
			\end{aligned}
		\end{equation*}
		which combined with \eqref{E_2order1} implies \eqref{E_2order}.
		The proof is completed.
	\end{proof}

	\subsection{Estimates of third-order derivatives}\label{sec_3}
	Estimates on the third-order derivatives of $(v(t,x), u(t,x), \theta(t,x))$ with respect to $x$ will be proved in this subsection.
	The notation
	$A\lesssim_h B$ is employed to denote that
	$A\leq C_h B$  holds uniformly for some constant $C_h$,
	depending only on $\Pi_0$,  $V_0$, and $H(C_2)$
	with $C_2$ given in Lemma \ref{L_th2}.
	And we denote by $C(m_2)$
	some positive constant which depends only on
	$m_2$,  $\Pi_0$,  $V_0$, and $H(C_2)$.
	
	We first give an estimate on the third-order derivatives of $u$ and $\theta$.
	\begin{lemma}\label{L_3order1}
		Assume that the conditions listed in Lemma \ref{L_v1} hold. Then
		\begin{align}\notag
			&\sup_{t\in[0,T]}\|(u_{xxx},\theta_{xxx})(t)\|^2
			+\int_0^T\|(u_{xxxx},\theta_{xxxx})(t)\|^2\mathrm{d}t\\
			&\qquad
			\lesssim_h C(m_2)+\int_0^T\|v_{xxx}(t)\|^2\mathrm{d}t
			+\sup_{t\in[0,T]}\|v_{xxx}(t)\|^2. \label{E_3order1}
		\end{align}
	\end{lemma}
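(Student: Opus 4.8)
The plan is to run the energy scheme of Lemma~\ref{L_2order1} one derivative higher, so that the right-hand side involves only quantities already controlled by the second-order bounds of Lemma~\ref{L_2order} or by the still-unknown norm $\|v_{xxx}\|$. First I would differentiate $\eqref{NS_L}_2$ twice in $x$, multiply the result by $u_{xxxx}$, and integrate over $[0,t]\times\mathbb{R}$; integration by parts produces $\|u_{xxx}(t)\|^2$ together with the dissipation $\int_0^t\int_{\mathbb{R}}\frac{\mu}{v}u_{xxxx}^2$, and, since \eqref{bound_v}, \eqref{apriori3} and \eqref{h} give $\mu/v\gtrsim 1$, Cauchy's inequality leaves on the right exactly $\int_0^t\int_{\mathbb{R}}|P_{xxx}|^2$ and the viscous commutator $\int_0^t\int_{\mathbb{R}}\left|\frac{\mu u_{xxxx}}{v}-\left(\frac{\mu u_x}{v}\right)_{xxx}\right|^2$, the $\epsilon\int_0^t\int_{\mathbb{R}}u_{xxxx}^2$ produced being absorbed on the left.

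To bound $P_{xxx}=(\theta/v)_{xxx}$ I would use the chain-rule estimates \eqref{E_f} and \eqref{upper_th}--\eqref{bound_v}, which give $|P_{xxx}|^2\lesssim_h v_{xxx}^2+\theta_{xxx}^2+|(v_{xx},\theta_{xx})|^2|(v_x,\theta_x)|^2+|(v_x,\theta_x)|^6$; the $\theta_{xxx}$ term and the lower-order products are absorbed into $C(m_2)$ by Lemma~\ref{L_2order} and the one-dimensional Gagliardo--Nirenberg/Sobolev inequalities, leaving $\int_0^t\|v_{xxx}\|^2$. For the commutator I would expand $\left(\frac{\mu u_x}{v}\right)_{xxx}=\frac{\mu}{v}u_{xxxx}+3\left(\frac{\mu}{v}\right)_xu_{xxx}+3\left(\frac{\mu}{v}\right)_{xx}u_{xx}+\left(\frac{\mu}{v}\right)_{xxx}u_x$ and use \eqref{E_fa} and \eqref{2.3H_est}: the $\left(\frac{\mu}{v}\right)_xu_{xxx}$ and $\left(\frac{\mu}{v}\right)_{xx}u_{xx}$ terms are $\lesssim_h C(m_2)$, where one uses that $\sup_{[0,T]}\|(u_x,v_x,\theta_x)\|_{L^\infty}^2\leq C(m_2)$ (which follows from $\sup\|(u_x,v_x,\theta_x)\|\leq C_3$ and $\sup\|(u_{xx},v_{xx},\theta_{xx})\|\leq C(m_2)$ in Lemmas~\ref{L_th2} and~\ref{L_2order}); and the only term generating $v_{xxx}$ is $\left(\frac{\mu}{v}\right)_{xxx}u_x$, which, because $\mu=\tilde\mu h(v)\theta^\alpha$, also carries via $(\theta^\alpha)_{xxx}$ an $\alpha$-weighted $\theta_{xxx}$ that is harmless since $|\alpha|\leq1$ and $\int_0^T\|\theta_{xxx}\|^2\leq C(m_2)$. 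The resulting $\int_0^t\int_{\mathbb{R}}v_{xxx}^2u_x^2$ may be estimated either as $\sup_{[0,T]}\|u_x\|_{L^\infty}^2\int_0^t\|v_{xxx}\|^2$ or as $\sup_{[0,t]}\|v_{xxx}\|^2\int_0^t\|u_x\|_1^2$, which explains the presence of both $\int_0^t\|v_{xxx}\|^2$ and $\sup_{[0,t]}\|v_{xxx}\|^2$ on the right of \eqref{E_3order1}. This yields the $u_{xxx}$ half of \eqref{E_3order1}.

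The $\theta_{xxx}$ estimate is symmetric: differentiate \eqref{id_theta} twice in $x$, multiply by $\theta_{xxxx}$, integrate, and absorb $\int_0^t\int_{\mathbb{R}}\frac{\kappa}{v}\theta_{xxxx}^2$ using $\kappa/v\gtrsim1$. The new right-hand terms are $\int_0^t\int_{\mathbb{R}}|(Pu_x)_{xx}|^2$, $\int_0^t\int_{\mathbb{R}}\left|\left(\frac{\mu u_x^2}{v}\right)_{xx}\right|^2$ and the heat-flux commutator $\int_0^t\int_{\mathbb{R}}\left|\frac{\kappa\theta_{xxxx}}{v}-\left(\frac{\kappa\theta_x}{v}\right)_{xxx}\right|^2$. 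Writing $(Pu_x)_{xx}=P_{xx}u_x+2P_xu_{xx}+Pu_{xxx}$ and expanding $\left(\frac{\mu u_x^2}{v}\right)_{xx}$ via \eqref{E_fa}, and using boundedness of $P$ together with the Gagliardo--Nirenberg control of $u_x$ and $u_{xx}$ and Lemma~\ref{L_2order}, shows the first two integrals are $\lesssim_h C(m_2)$; the heat-flux commutator is handled exactly as the viscous one and contributes $C(m_2)+\int_0^t\|v_{xxx}\|^2+\sup_{[0,t]}\|v_{xxx}\|^2$. Adding the $u_{xxx}$ and $\theta_{xxx}$ inequalities gives \eqref{E_3order1}.

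The main obstacle is the two commutators $\frac{\mu u_{xxxx}}{v}-\left(\frac{\mu u_x}{v}\right)_{xxx}$ and $\frac{\kappa\theta_{xxxx}}{v}-\left(\frac{\kappa\theta_x}{v}\right)_{xxx}$: after the Leibniz expansion one must classify about a dozen products of derivatives of $v$, $u$, $\theta$ up to order three, deciding for each whether it is absorbed into $C(m_2)$ through the second-order bounds and Sobolev interpolation, or whether it must be deferred to $\int_0^t\|v_{xxx}\|^2+\sup_{[0,t]}\|v_{xxx}\|^2$ — which is forced for every factor $v_{xxx}$, since no $\int_0^t\|v_{xxx}\|^2$ appears on the left of this lemma — while using $|\alpha|\leq1$ (or the smallness in \eqref{apriori3}) to ensure that differentiating $\theta^\alpha$ never produces an uncontrolled top-order factor. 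Everything else is routine integration by parts and Cauchy's inequality.
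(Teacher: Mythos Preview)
Your proposal is correct and follows the paper's approach almost verbatim: differentiate the momentum and energy equations twice, multiply by $u_{xxxx}$ and $\theta_{xxxx}$ respectively, and bound the pressure term and the commutators $\frac{\mu u_{xxxx}}{v}-\bigl(\frac{\mu u_x}{v}\bigr)_{xxx}$, $\frac{\kappa\theta_{xxxx}}{v}-\bigl(\frac{\kappa\theta_x}{v}\bigr)_{xxx}$ via the Leibniz expansion, \eqref{E_fa}, and the second-order estimates of Lemma~\ref{L_2order}. The only notable difference is that where the paper keeps a small coupling term $\delta\int_0^t\|(u_{xxxx},\theta_{xxxx})\|^2$ in each commutator bound (to be absorbed after summing the $u$- and $\theta$-inequalities), you instead invoke $\int_0^T\|(u_{xxx},\theta_{xxx})\|^2\le C(m_2)$ and $\sup_{[0,T]}\|(v_x,u_x,\theta_x)\|_{L^\infty}\le C(m_2)$ from \eqref{E_2order} directly, which is a legitimate and slightly cleaner shortcut.
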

	\begin{proof} The proof is divided into the following steps:
		
		\noindent{\bf Step\,1.}
		Differentiating $\eqref{NS_L}_2$ with repect to $x$ twice
		and multiplying the resulting identity by $u_{xxxx}$ yield
		\begin{equation*}
			\begin{aligned}
				&\frac12\left(u_{xxx}^2\right)_t
				-\left(u_{xxt}u_{xxx}\right)_x
				+\frac{\mu u_{xxxx}^2}{v}\\
				&\qquad
				=P_{xxx}u_{xxxx}+\left[\frac{\mu u_{xxxx}}{v}
				-\left(\frac{\mu u_x}{v}\right)_{xxx}\right]u_{xxxx}.
			\end{aligned}
		\end{equation*}
		Integrate the above identity over $[0,t]\times\mathbb{R}$ to have
		\begin{align}\notag
			&\|u_{xxx}(t)\|^2+\int_0^t\int_{\mathbb{R}}u_{xxxx}^2\\ \label{E_uxxx1}
			&\qquad	\lesssim  1+
			\int_0^t\int_{\mathbb{R}}\left|P_{xxx}\right|^2
			+\int_0^t\int_{\mathbb{R}}
			\left|\frac{\mu u_{xxxx}}{v}-\left(\frac{\mu u_x}{v}\right)_{xxx}\right|^2.
		\end{align}
		We compute from \eqref{id_Pxx}  that
		\begin{equation*}
			P_{xxx}=\frac{\theta_{xxx}}{v}-3\frac{\theta_{xx} v_x}{v^2}
			-3\frac{\theta_x v_{xx}}{v^2}+6\frac{\theta_x v_x^2}{v^3}
			-\frac{\theta v_{xxx}}{v^2}+6\frac{\theta v_xv_{xx}}{v^3}
			-6\frac{\theta v_x^3}{v^4}.
		\end{equation*}
		Hence
		\begin{equation*}
			|P_{xxx}|^2\lesssim |(v_{xxx},\theta_{xxx})|^2
			+|(v_x,\theta_x)|^6+|(v_x,\theta_x)|^2|(v_{xx},\theta_{xx})|^2.
		\end{equation*}
		It follows from \eqref{E_uxx1d}--\eqref{E_uxx1f} and \eqref{E_2order}\,that
		\begin{equation}\label{E_uxxx1a}
			\int_0^t\int_{\mathbb{R}}\left|P_{xxx}\right|^2
			\lesssim
			C(m_2) +\int_0^t\int_{\mathbb{R}}v_{xxx}^2.
		\end{equation}
		From  \eqref{E_fa} with $f(v )=\frac{h(v)}{v}$ and
		\begin{equation*}
			\begin{aligned}
				\left(\frac{\mu u_x}{v}\right)_{xxx}
				=\left(\frac{\mu }{v}\right)_{xxx}u_x
				+3\left(\frac{\mu }{v}\right)_{xx }u_{xx}
				+3\left(\frac{\mu }{v}\right)_{x }u_{xxx}
				+\frac{\mu }{v} u_{xxxx},
			\end{aligned}
		\end{equation*}
		we have
		\begin{equation*}
			\begin{aligned}
				&\left|\left(\frac{\mu u_x}{v}\right)_{xxx}-\frac{\mu u_{xxxx}}{v}\right|
				\lesssim_h |(v_x,u_x,\theta_x)|^4
				+|(v_x,u_x,\theta_x)|^2|(v_{xx},u_{xx},\theta_{xx})|
				\\       &\qquad
				+|u_x||v_{xxx}|
				+|(v_x,u_x,\theta_x)||(u_{xxx},\theta_{xxx})|+|u_{xx}||(v_{xx},\theta_{xx})|.
			\end{aligned}
		\end{equation*}
		In view of \eqref{E_1order} and \eqref{E_2order}, we deduce
		\begin{align} \notag 
			&\int_0^t\int_{\mathbb{R}}\left|\frac{\mu u_{xxxx}}{v}
			-\left(\frac{\mu u_x}{v}\right)_{xxx}\right|^2
			\\       &\qquad
			\lesssim_h   C(\delta,m_2)
			+\sup_{ [0,t]}\|v_{xxx} \|^2
			+\delta
			\int_0^t\|(u_{xxxx},\theta_{xxxx}) \|^2. \label{E_uxxx1b}
		\end{align}
		Plugging \eqref{E_uxxx1a} and \eqref{E_uxxx1b} into \eqref{E_uxxx1},
		we get
		\begin{align} \notag 
			&\|u_{xxx}(t)\|^2+\int_0^t\int_{\mathbb{R}} u_{xxxx}^2
			\\       &\qquad
			\lesssim_h C(\delta,m_2) +\int_0^t\int_{\mathbb{R}}v_{xxx}^2
			+\sup_{ [0,t]}\|v_{xxx} \|^2
			+\delta
			\int_0^t\|(u_{xxxx},\theta_{xxxx}) \|^2. \label{E_uxxx2}
		\end{align}
		
		\noindent{\bf Step\,2.}
		We differentiate $\eqref{id_theta}$ with repect to $x$ twice
		and multiply the resulting identity by $\theta_{xxxx}$ to obtain
		\begin{equation*}
			\begin{aligned}
				&\frac{c_v}{2}\left(\theta_{xxx}\right)_t
				-c_v\left(\theta_{xxt}\theta_{xxx}\right)_x
				+\kappa\frac{\theta_{xxxx}^2}{v}      \\       &\qquad
				= (Pu_x)_{xx}\theta_{xxxx}+\left(\frac{\mu u_x^2}{v}\right)_{xx}\theta_{xxxx}
				+\left[\frac{\kappa\theta_{xxxx}}{v}
				-\left(\frac{\kappa\theta_x}{v}\right)_{xxx}\right]\theta_{xxxx}.
			\end{aligned}
		\end{equation*}
		Integrate the above identity over $[0,t]\times\mathbb{R}$ to have
		\begin{align} \notag 
			\|\theta_{xxx}(t)\|^2+\int_0^t\int_{\mathbb{R}}\theta_{xxxx}^2         
			\lesssim~&
			\int_0^t\int_{\mathbb{R}}\left|(Pu_x)_{xx}\right|^2
			+\int_0^t\int_{\mathbb{R}}\left|\left(\frac{\mu u_x^2}{v}\right)_{xx}\right|^2\\ \label{E_th_xxx1}
			&+\int_0^t\int_{\mathbb{R}}\left|\frac{\kappa\theta_{xxxx}}{v}
			-\left(\frac{\kappa\theta_x}{v}\right)_{xxx}\right|^2.
		\end{align}
		Similar to the derivation of \eqref{E_uxxx1b}, we can obtain
		\begin{align} \notag 
			&\int_0^t\int_{\mathbb{R}}\left|\left(\frac{\kappa\theta_x}{v}\right)_{xxx}
			-\frac{\kappa\theta_{xxxx}}{v}\right|^2      \\       &\qquad
			\lesssim_h   C(\delta,m_2)
			+\sup_{[0,t]}\|v_{xxx}\|^2
			+\delta
			\int_0^t\|(u_{xxxx},\theta_{xxxx})\|^2. \label{E_th_xxx1a}
		\end{align}
		The identity
		\begin{equation*}
			(Pu_x)_{xx}=(Pu_{xx}+P_xu_x)_x
			=Pu_{xxx}+2P_xu_{xx}+P_{xx}u_x
		\end{equation*}
		implies
		\begin{equation*}
			\begin{aligned}
				\left|(Pu_x)_{xx}\right|^2\lesssim
				u_{xxx}^2+\theta_x^2u_{xx}^2+v_x^2u_{xx}^2+u_x^2v_{xx}^2
				+u_x^2\theta_{xx}^2+v_x^2u_x^2\theta_x^2+v_x^4u_x^2.
			\end{aligned}
		\end{equation*}
		Hence
		\begin{equation}\label{E_th_xxx1b}
			\int_0^t\int_{\mathbb{R}}\left|(Pu_x)_{xx}\right|^2
			\lesssim C(m_2).
		\end{equation}
		On the other hand, from
		\begin{equation*}
			\left(\frac{\mu u_x^2}{v}\right)_{x}
			=\frac{\mu_x u_x^2}{v}+\frac{2\mu u_xu_{xx}}{v}-\frac{\mu u_x^2v_x}{v^2},
		\end{equation*}
		and
		\begin{equation*}
			\begin{aligned}
				\left(\frac{\mu u_x^2}{v}\right)_{xx}
				=&~\frac{\mu_{xx} u_x^2}{v}+\frac{4\mu_x u_xu_{xx}}{v}
				+\frac{2\mu u_{xx}^2}{v}+\frac{2\mu u_xu_{xxx}}{v}\\
				&~-\frac{2\mu_x u_x^2v_x}{v^2}-\frac{4\mu u_xu_{xx}v_x}{v^2}
				-\frac{\mu u_x^2v_{xx}}{v^2}+\frac{2\mu u_x^2v_x^2}{v^3},
			\end{aligned}
		\end{equation*}
		we have
		\begin{equation*}
			\begin{aligned}
				\left|\left(\frac{\mu u_x^2}{v}\right)_{xx}\right|
				\lesssim_h\,&
				|(v_x,u_x,\theta_x)|^4 +u_{xx}^2+|u_xu_{xxx}|\\
				&+|(v_x,u_x,\theta_x)|^2|(v_{xx},u_{xx},\theta_{xx})|.
			\end{aligned}
		\end{equation*}
		Thus,
		\begin{equation}\label{E_th_xxx1c}
			\int_0^t\int_{\mathbb{R}}\left|\left(\frac{\mu u_x^2}{v}\right)_{xx}\right|^2
			\lesssim C(m_2).
		\end{equation}
		Plug \eqref{E_th_xxx1a}--\eqref{E_th_xxx1c} into \eqref{E_th_xxx1} to deduce
		\begin{align} \notag 
			&\|\theta_{xxx}(t)\|^2+\int_0^t\int_{\mathbb{R}} \theta_{xxxx}^2       \\       &\qquad
			\lesssim_h C(\delta,m_2) +\int_0^t\int_{\mathbb{R}}v_{xxx}^2
			+\sup_{ [0,t]}\|v_{xxx} \|^2
			+\delta
			\int_0^t\|(u_{xxxx},\theta_{xxxx}) \|^2. \label{E_th_xxx2}
		\end{align}
		Combining \eqref{E_uxxx2} and \eqref{E_th_xxx2},
		we take $\delta$ suitably small to derive \eqref{E_3order1}.
	\end{proof}
	By using
	\eqref{apriori3} and Gronwall's inequality,
	we can deduce the $m_2$-dependent bounds for
	the third-order derivatives of $(v(t,x), u(t,x), \theta(t,x))$.
	The proof is
	similar to that of Lemma \ref{L_2order} and hence
	we omit the details for brevity.
	\begin{lemma}
		\label{L_3order}
		Assume that the conditions listed in Lemma \ref{L_v1} hold. Then 
		for all $t\in[0,T]$, we have
		\begin{equation}\label{E_3order2}
			\begin{aligned}
				\|(v_{xxx},u_{xxx},\theta_{xxx})(t)\|^2
				+\int_0^T\|(v_{xxx},u_{xxxx},\theta_{xxxx})(s)\|^2\mathrm{d}s
				\leq C(m_2).
			\end{aligned}
		\end{equation}
	\end{lemma}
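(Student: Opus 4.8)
The plan is to imitate the proof of Lemma~\ref{L_2order} one derivative higher and then feed the result into Lemma~\ref{L_3order1}; the smallness in \eqref{apriori3} together with Gronwall's inequality does the rest. First I would differentiate the identity \eqref{id1} twice in $x$, multiply by $\left(\frac{\mu v_x}{v}\right)_{xx}$, and rearrange exactly as in the derivation of \eqref{E_vxx1} to obtain
\begin{align*}
\left[\tfrac12\left(\tfrac{\mu v_x}{v}\right)_{xx}^{2}\right]_{t}
-\left[u_{xx}\left(\tfrac{\mu v_x}{v}\right)_{xx}\right]_{t}
+\left[u_{xx}\left(\tfrac{\mu v_x}{v}\right)_{xt}\right]_{x}
={}& u_{xxx}\left(\tfrac{\mu v_x}{v}\right)_{xt}
+\left(\tfrac{\theta}{v}\right)_{xxx}\left(\tfrac{\mu v_x}{v}\right)_{xx}\\
&+\left(\tfrac{\mu v_x}{v}\right)_{xx}\left[\tfrac{\mu_\theta}{v}(v_x\theta_t-\theta_x u_x)\right]_{xx}.
\end{align*}
Integrating over $[0,t]\times\mathbb{R}$, using \eqref{bound_v} and the expansion $\left(\frac{\mu v_x}{v}\right)_{xx}=\frac{\mu}{v}v_{xxx}+2\left(\frac{\mu}{v}\right)_{x}v_{xx}+\left(\frac{\mu}{v}\right)_{xx}v_x$, one reduces matters to controlling the three integrals on the right by terms of the shape $C(m_2)$, $\epsilon\int_0^t\|v_{xxx}\|^2$ (to be absorbed into the dissipation), $\sup_{[0,t]}\|v_{xxx}\|^2$ (to be absorbed after combining with \eqref{E_3order1}), and a Gronwall remainder $\int_0^t g(\tau)\|v_{xxx}(\tau)\|^2\,\mathrm{d}\tau$ with $\int_0^T g\le C(m_2)$.

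For the first integral I would differentiate the relation $\left(\frac{\mu v_x}{v}\right)_{t}=\frac{v\mu_v-\mu}{v^2}v_xu_x+\frac{\mu_\theta\theta_t v_x}{v}+\frac{\mu u_{xx}}{v}$ once in $x$; since $\left(\frac{\mu v_x}{v}\right)_{xt}$ then contains no $v_{xxx}$, only $u_{xxx}$, $\theta_{xxx}$ (through the once‑differentiated heat equation), and lower‑order terms, and since $\int_0^T\|(u_{xxx},\theta_{xxx})\|^2\le C(m_2)$ by \eqref{E_2order}, Cauchy's inequality together with \eqref{pro2.1}, \eqref{apriori3}, \eqref{E_2order} bounds this term by $\lesssim_h\epsilon\int_0^t\|v_{xxx}\|^2+C(\epsilon,m_2)$. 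For the second integral, expanding $\left(\frac{\theta}{v}\right)_{xxx}$ as in the computation preceding \eqref{E_uxxx1a} produces the favourable term $-\frac12\int_0^t\int\frac{\theta\mu v_{xxx}^2}{v^3}$ (from pairing $-\frac{\theta v_{xxx}}{v^2}$ with $\frac{\mu}{v}v_{xxx}$) plus a remainder which, by \eqref{E_2order}, \eqref{E_1order} and the one‑order‑up analogues of \eqref{E_uxx1d}--\eqref{E_uxx1f}, is bounded by $C(m_2)+\epsilon\int_0^t\|v_{xxx}\|^2+C(m_2)\int_0^t\|(v_x,\theta_x)\|^2\|v_{xxx}\|^2$. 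The third integral is the analogue of \eqref{E_vxx1d}: writing $\mu_\theta=\alpha\mu/\theta$, using \eqref{apriori3}, the $x$‑ and $xx$‑differentiated forms of \eqref{id_theta} to expand $\theta_{tx}$ and $\theta_{txx}$ (the latter involving $\theta_{xxxx}$, $v_{xxx}$, and lower order), and the a priori smallness $\alpha^2N^2\lesssim1$, $\int_0^T\|\theta_{xxxx}\|^2\lesssim N^2$, every term is seen to carry a factor $\epsilon$ or $\alpha^2$ or to reduce to $C(\epsilon,m_2)$, so this integral is $\lesssim_h\epsilon\int_0^t\|v_{xxx}\|^2+C(\epsilon,m_2)$.

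Collecting these estimates and choosing $\epsilon$ small to absorb the $\epsilon\int_0^t\|v_{xxx}\|^2$ terms into $\int_0^t\int\frac{\theta\mu v_{xxx}^2}{v^3}$, one obtains $\|v_{xxx}(t)\|^2+\int_0^t\|v_{xxx}\|^2\lesssim_h C(m_2)+\int_0^t\|(v_x,\theta_x)\|^2\|v_{xxx}\|^2$, and Gronwall's inequality (with $\int_0^T\|(v_x,\theta_x)\|^2\le C(m_2)$ from \eqref{E_1order} and \eqref{E_2order}) gives $\sup_{[0,T]}\|v_{xxx}\|^2+\int_0^T\|v_{xxx}\|^2\le C(m_2)$. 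Plugging this into \eqref{E_3order1} yields the bounds on $(u_{xxx},\theta_{xxx})$ and on $\int_0^T\|(u_{xxxx},\theta_{xxxx})\|^2$, which is \eqref{E_3order2}. I expect the main obstacle to be the third integral: one must check that every top‑order contribution of $\left[\frac{\mu_\theta}{v}(v_x\theta_t-\theta_x u_x)\right]_{xx}$ — in particular the terms containing $\theta_{txx}$, which through the $x$‑differentiated heat equation involve $\theta_{xxxx}$ and $v_{xxx}$ — genuinely carries a power of $\alpha$, so that the a priori bounds $\alpha^2N^2\lesssim1$ and $\int_0^T\|\theta_{xxxx}\|^2\lesssim N^2$ are strong enough to absorb it; this is the higher‑order version of the delicate bookkeeping already carried out in \eqref{E_vxx1d}.
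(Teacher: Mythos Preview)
Your proposal is correct and follows essentially the same approach the paper indicates: the paper omits the details but states that the proof proceeds ``similar to that of Lemma~\ref{L_2order}'' by using \eqref{apriori3} and Gronwall's inequality, which is exactly the strategy you outline---differentiating \eqref{id1} twice, multiplying by $\left(\tfrac{\mu v_x}{v}\right)_{xx}$, extracting the dissipative term $-\int_0^t\!\int\tfrac{\theta\mu v_{xxx}^2}{v^3}$, controlling the $\alpha$-weighted remainder via the a~priori bound $\int_0^T\|\theta_{xxxx}\|^2\lesssim N^2$, applying Gronwall, and then feeding the result into \eqref{E_3order1}.
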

	
	By virtue of  Lemmas \ref{L_bas}--\ref{L_3order},
	we can get the following corollary.
	\begin{corollary} \label{cor1}
		Assume that the conditions listed in Lemma \ref{L_v1} hold. Then
		there exists  $C(m_2)>0$,
		which depends only on
		$m_2$, $\Pi_0$,
		$V_0$, and
		$H(C_2)$
		with $C_2$ being given in Lemma \ref{L_th2},
		such that for all $t\in[0,T]$,
		\begin{equation}\label{E}
			\begin{aligned}
				\|(v-1,u ,\theta-1)(t)\|_3^2
				+\int_0^T
				\left[\|v_{x}(s)\|_2^2+\|(u_{x},\theta_{x})(s)\|_3^2\right]\mathrm{d}s
				\leq C(m_2).
			\end{aligned}
		\end{equation}
	\end{corollary}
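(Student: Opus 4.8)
The plan is to obtain \eqref{E} by merely assembling the a priori bounds proved in Lemmas~\ref{L_bas}--\ref{L_3order}: the corollary is their consolidation and involves no new estimate. Recall that $\|f\|_3^2=\sum_{k=0}^{3}\|\partial_x^k f\|^2$, so $\|v_x\|_2^2=\sum_{k=1}^{3}\|\partial_x^k v\|^2$ and $\|(u_x,\theta_x)\|_3^2=\sum_{k=1}^{4}\|\partial_x^k(u,\theta)\|^2$. Thus \eqref{E} is equivalent to a finite list of $L^\infty_t L^2_x$ bounds on $\partial_x^k(v,u,\theta)$ for $0\leq k\leq 3$, together with $L^2_{t,x}$ bounds on $\partial_x^k v$ for $1\leq k\leq 3$ and on $\partial_x^k(u,\theta)$ for $1\leq k\leq 4$; each of these is already at hand.

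For the time-supremum part I would split $\sup_{t\in[0,T]}\|(v-1,u,\theta-1)(t)\|_3^2$ into its $H^1$-, second-order, and third-order pieces. The $H^1$-piece is $\leq C_3^2$ by \eqref{E_1order} of Lemma~\ref{L_th2}; the piece $\sup_t\|(v_{xx},u_{xx},\theta_{xx})(t)\|^2$ is $\leq C(m_2)$ by \eqref{E_2order} of Lemma~\ref{L_2order}; and the piece $\sup_t\|(v_{xxx},u_{xxx},\theta_{xxx})(t)\|^2$ is $\leq C(m_2)$ by \eqref{E_3order2} of Lemma~\ref{L_3order}. Since $C_3$ depends only on $\Pi_0$, $V_0$, and $H(V_0)$, it is absorbed into $C(m_2)$.

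For the dissipation integral I would proceed term by term. The bound $\int_0^T\|v_x\|^2\,\mathrm{d}t\leq C(m_2)$ follows from \eqref{E_1order} together with the a priori lower bound $\theta\geq m_2$, via $m_2\int_0^T\|v_x\|^2\,\mathrm{d}t\leq\int_0^T\|\sqrt{\theta}v_x\|^2\,\mathrm{d}t\leq C_3^2$; the bounds $\int_0^T\|v_{xx}\|^2\,\mathrm{d}t$ and $\int_0^T\|v_{xxx}\|^2\,\mathrm{d}t$ are exactly those supplied by \eqref{E_2order} and \eqref{E_3order2}. On the $(u,\theta)$ side, $\int_0^T\|(u_x,\theta_x)\|^2\,\mathrm{d}t$ and $\int_0^T\|(u_{xx},\theta_{xx})\|^2\,\mathrm{d}t$ are both $\leq C_3^2$ since \eqref{E_1order} controls $\int_0^T\|(\theta_x,u_x)\|_1^2\,\mathrm{d}t$, while $\int_0^T\|(u_{xxx},\theta_{xxx})\|^2\,\mathrm{d}t\leq C(m_2)$ by \eqref{E_2order} and $\int_0^T\|(u_{xxxx},\theta_{xxxx})\|^2\,\mathrm{d}t\leq C(m_2)$ by \eqref{E_3order2}. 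Adding these finitely many inequalities yields \eqref{E}.

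Since the corollary is a pure consolidation, there is no genuine obstacle. The only book-keeping point worth recording is that all the constants $C_i$ of Lemma~\ref{L_th2} are fixed in terms of $\Pi_0$, $V_0$, and $H(V_0)$ and hence dominated by $C(m_2)$, and that whenever an earlier lemma records a $v$-, $\theta$-, $\mu$-, or $\kappa$-weighted integral one may pass to the corresponding unweighted one at the cost of such a constant, using the uniform bounds \eqref{upper_th}--\eqref{bound_v} on $v$ and $\theta$ together with \eqref{transport}--\eqref{h} and \eqref{apriori3} to bound $v^{\pm1}$, $\mu^{\pm1}$, $\kappa^{\pm1}$ above and below; in fact \eqref{E_1order}, \eqref{E_2order}, and \eqref{E_3order2} are already stated in unweighted form, so even this is not strictly needed here.
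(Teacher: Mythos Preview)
Your proposal is correct and matches the paper's own treatment: the paper gives no proof beyond the sentence ``By virtue of Lemmas~\ref{L_bas}--\ref{L_3order}, we can get the following corollary,'' and your write-up is exactly the bookkeeping that makes this explicit, including the one nontrivial point that $\int_0^T\|v_x\|^2\,\mathrm{d}t$ must be extracted from the $\sqrt{\theta}$-weighted bound in \eqref{E_1order} via $\theta\geq m_2$.
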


	\section{Proof of Theorm \ref{thm}}\label{sec_proof}
	
	In this section we will prove our main result, Theorem \ref{thm}.
	For this purpose,
	we first present the local solvability result
	to the Cauchy problem \eqref{NS_L}--\eqref{far}, \eqref{transport}--\eqref{h}
	in the following lemma,
	which can be proved by the standard iteration method (see  \cite{MN04book}).
	\begin{lemma} \label{local}
		If  positive constants $M$ and $\lambda_i$ ($i=1,2$)
		exist such that
		$\|(v_0-1,u_0,\theta_0-1)\|_3\leq M$,
		$ v_0(x)\geq \lambda_1$,
		and $\theta_0(x)\geq \lambda_2$
		for all $x\in\mathbb{R}$, then
		there exists $T_0=T_0(\lambda_1,\lambda_2,M)>0$,
		depending only on
		$\lambda_1$, $\lambda_2$ and $M$, such that
		the Cauchy problem \eqref{NS_L}--\eqref{far}, \eqref{transport}--\eqref{h}
		has a unique
		solution $(v, u, \theta)
		\in X(0,T_0;\frac{1}{2}\lambda_1,
		\frac{1}{2}\lambda_2,2M)$.
	\end{lemma}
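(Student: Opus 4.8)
The plan is to build the solution by the classical linearization-and-iteration argument. Set $(v^{(0)},u^{(0)},\theta^{(0)})(t,x):=(v_0,u_0,\theta_0)(x)$ and, given the $n$-th iterate, define $(v^{(n+1)},u^{(n+1)},\theta^{(n+1)})$ by first solving the continuity equation by quadrature, $v^{(n+1)}(t,x)=v_0(x)+\int_0^t u^{(n)}_x(s,x)\,\mathrm{d}s$, and then solving sequentially the two linear parabolic equations obtained from \eqref{NS_L}$_2$ and \eqref{id_theta} by freezing the transport coefficients, the factors $1/v$, the pressure, and the quadratic source $\mu u_x^2/v$ at $(v^{(n+1)},\theta^{(n)})$: thus $u^{(n+1)}$ solves a linear parabolic equation with diffusion coefficient $\mu(v^{(n+1)},\theta^{(n)})/v^{(n+1)}$ and forcing $-P(v^{(n+1)},\theta^{(n)})_x$, and then $\theta^{(n+1)}$ solves one with coefficient $\kappa(v^{(n+1)},\theta^{(n)})/v^{(n+1)}$ and forcing built from $u^{(n+1)}_x$. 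As long as the iterates stay in a fixed compact $v$- and $\theta$-range bounded away from zero, \eqref{transport}--\eqref{h} together with the smoothness of $h$ guarantee that these coefficients are smooth and bounded both above and away from zero, so the linear problems are uniformly parabolic and classical linear parabolic theory in $H^3(\mathbb{R})$ produces a unique iterate with exactly the regularity required for membership in the defining class $X$.

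The core of the argument is the uniform-in-$n$ a priori estimate. Carrying out $L^2$ energy estimates on the linearized momentum and temperature equations and their $x$-derivatives up to third order --- essentially the computations of the previous section, but for the linear equations, where no smallness of $|\alpha|$ is needed because $T$ will be small --- one obtains $\frac{\mathrm{d}}{\mathrm{d}t}\|(v^{(n+1)}-1,u^{(n+1)},\theta^{(n+1)})(t)\|_3^2+(\text{dissipation})\lesssim Q(2M)$ for a fixed continuous increasing function $Q$, provided $(v^{(n)},u^{(n)},\theta^{(n)})\in X(0,T_0;\tfrac12\lambda_1,\tfrac12\lambda_2,2M)$; Gronwall on $[0,T_0]$ then keeps $\mathcal{E}(0,t)\le (2M)^2$ once $T_0=T_0(\lambda_1,\lambda_2,M)$ is small. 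Propagation of the lower bounds is even simpler: $|v^{(n+1)}(t,x)-v_0(x)|\le\int_0^t\|u^{(n)}_x(s)\|_{L^\infty}\,\mathrm{d}s\lesssim T_0^{1/2}(2M)$, so $v^{(n+1)}\ge\tfrac12\lambda_1$ for $T_0$ small, while $\theta^{(n+1)}\ge\tfrac12\lambda_2$ follows either from the same kind of $L^\infty$-in-time estimate or, more robustly, from a minimum-principle argument for the linear parabolic equation satisfied by $\theta^{(n+1)}$, in the spirit of Lemma \ref{L_lower}. This closes the iteration: the map $(v^{(n)},u^{(n)},\theta^{(n)})\mapsto(v^{(n+1)},u^{(n+1)},\theta^{(n+1)})$ preserves $X(0,T_0;\tfrac12\lambda_1,\tfrac12\lambda_2,2M)$.

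It remains to pass to the limit and to prove uniqueness. Estimating the differences $(v^{(n+1)}-v^{(n)},u^{(n+1)}-u^{(n)},\theta^{(n+1)}-\theta^{(n)})$ in the weaker norm $C([0,T_0];L^2(\mathbb{R}))\cap L^2(0,T_0;H^1(\mathbb{R}))$ --- all coefficients being smooth functions of already-bounded quantities, the difference equations are linear with right-hand sides bounded by $C(\lambda_1,\lambda_2,M)$ times the previous difference plus controllable dissipation --- one finds, after possibly shrinking $T_0$ once more, a contraction factor strictly less than $1$; hence the sequence is Cauchy in this norm while remaining bounded in the strong norm, and interpolation together with weak-$*$ lower semicontinuity show that the limit $(v,u,\theta)$ belongs to $X(0,T_0;\tfrac12\lambda_1,\tfrac12\lambda_2,2M)$ and solves \eqref{NS_L}--\eqref{far}, \eqref{transport}--\eqref{h}. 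Uniqueness in this class follows from the same difference estimate applied to two solutions with the same initial data, which yields $\|(v_1-v_2,u_1-u_2,\theta_1-\theta_2)(t)\|^2\lesssim\int_0^t\|(v_1-v_2,u_1-u_2,\theta_1-\theta_2)(s)\|^2\,\mathrm{d}s$ and then Gronwall. I expect the only genuinely delicate point to be bookkeeping: choosing the linearization so that the frozen diffusion coefficients stay uniformly parabolic --- which is exactly where the strict positivity and local boundedness built into \eqref{transport}--\eqref{h} on the compact $v$-range, together with the propagated lower bounds on $v$ and $\theta$, are used --- and tracking that every constant, and hence $T_0$, depends only on $\lambda_1$, $\lambda_2$, and $M$; upgrading $L^\infty(0,T_0;H^3)$ to $C([0,T_0];H^3)$ requires one extra differentiation and the parabolic gain of regularity but introduces nothing new.
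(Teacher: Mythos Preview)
Your proposal is correct and follows exactly the approach the paper indicates: the paper does not give a proof of this lemma at all, stating only that it ``can be proved by the standard iteration method'' with a reference to \cite{MN04book}. Your linearization-and-iteration sketch, with uniform $H^3$ bounds for small $T_0$, propagation of the lower bounds on $v$ and $\theta$, and contraction in a weaker norm, is precisely that standard argument.
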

	
	We prove Theorem \ref{thm} in the following six steps
	by employing the continuation argument.
	
	\vspace*{2mm}
	
	\noindent{\bf Step\,1}.
	Set $T_1=128 C_3^4$, where $C_3$ is exactly the same constant as in \eqref{E_1order}.
	Recalling \eqref{thm1a}--\eqref{thm1b} and
	applying Lemma \ref{local},  we can find a positive constant
	$t_1= \min\{T_1,T_0(V_0,V_0,\Pi_0)\}$
	such that there exists
	a unique solution  $(v, u, \theta)\in X(0,t_1;\frac{1}{2}V_0,
	\frac{1}{2}V_0,2\Pi_0)$ to
	the Cauchy problem \eqref{NS_L}--\eqref{far}, \eqref{transport}--\eqref{h}.
	
	Take $|\alpha|\leq \alpha_1$, where $\alpha_1$ is some positive constant such that
	\begin{equation}
		\label{alpha1}
		\left(\tfrac{1}{2}V_0\right)^{-\alpha_1}\leq 2,\quad
		\left(2\Pi_0\right)^{\alpha_1}\leq 2,\quad
		\Xi\left(\tfrac{1}{2}V_0,
		\tfrac{1}{2}V_0,2\Pi_0\right)\alpha_1\leq \epsilon_1,
	\end{equation}
	where the value of $\epsilon_1$ is chosen in Lemma \ref{L_v1}.
	Then we can apply Lemmas \ref{L_th2} and \ref{L_lower}
	with $T=t_1$ to deduce that for each $t\in[0,t_1]$,
	the local solution
	$(v, u, \theta)$ constructed above satisfies
	\begin{gather}\label{N3.2}
		\theta(t,x)\geq \frac{V_0}
		{C_4 V_0T_1+1}=:C_5 \quad {\rm for\ all}\ x\in \mathbb{R},\\
		\label{N3.3}
		\theta(t,x)\leq C_1,
		\quad
		C_2\leq v(t,x)\leq C_2^{-1}\quad {\rm for\ all}\ x\in \mathbb{R},\\
		\label{N3.3a}
		\|(v-1,u,\theta-1)(t)\|_1^2
		+\int_0^{t}
		\left[
		\left\|\sqrt\theta v_x(s)\right\|^2+\|(u_{x},\theta_{x})(s)\|_1^2\right]\mathrm{d}s
		\leq C_3^2.
	\end{gather}
	Combining Corollary \ref{cor1} and \eqref{N3.2},
	we can find a  positive constant $C_6$,
	which depends on
	$C_5$, $\Pi_0$,
	$V_0$, and
	$H(C_2)$,
	such that for each $t\in[0,t_1]$,
	\begin{equation} \label{N3.4}
		\|(v-1,u ,\theta-1)(t)\|_3^2
		+\int_0^{t}      \left[\|v_{x}(s)\|_2^2+\|(u_{x},\theta_{x})(s)\|_3^2\right]\mathrm{d}s
		\leq C_6^2.
	\end{equation}
	
	\vspace*{2mm}
	
	\noindent{\bf Step\,2}.
	If we take  $(v(t_1,\cdot), u(t_1,\cdot), \theta(t_1,\cdot))$ as the initial data and apply
	Lemma \ref{local} again, we can extend the local solution  $(v,u,\theta)$
	to the time interval $[0,t_1+t_2]$ with
	$$t_2=\min\left\{T_1-t_1, T_0(  C_2 ,C_5,C_6)\right\}.$$
	Moreover,  for all $(t,x)\in[t_1,t_1+t_2]\times\mathbb{R} $,
	we have
	\begin{equation*}
		v(t,x)\geq \tfrac{1}{2}C_2,\quad
		\theta(t,x)\geq  \tfrac{1}{2}C_5,
	\end{equation*}
	and
	\begin{equation}\label{N3.5}
		\|(v-1,u ,\theta-1)(t)\|_3^2
		+\int_{t_1}^t
		\left[\|v_{x}(s)\|_2^2+\|(u_{x},\theta_{x})(s)\|_3^2\right]\mathrm{d}s
		\leq 4C_6^2,
	\end{equation}
	which combined with \eqref{N3.4} implies  that for all $t\in[0,t_1+t_2]$,
	\begin{equation}\label{N3.6}
		\|(v-1,u ,\theta-1)(t)\|_3^2
		+\int_{0}^t
		\left[\|v_{x}(s)\|_2^2+\|(u_{x},\theta_{x})(s)\|_3^2\right]\mathrm{d}s
		\leq 5C_6^2.
	\end{equation}
	
	Take $|\alpha|\leq \min\{\alpha_1,\alpha_2\}$,
	where $\alpha_1>0$ is determined
	by \eqref{alpha1} and
	$\alpha_2$ is some positive constant satisfying
	\begin{equation}
		\label{alpha2}
		\left(\tfrac{1}{2}C_5\right)^{-\alpha_2}\leq 2,\quad
		\left(2\sqrt{5}C_6\right)^{\alpha_2}\leq 2,\quad
		\Xi\left(\tfrac{1}{2}C_2,
		\tfrac{1}{2}C_5,\sqrt{5}C_6\right)\alpha_2\leq \epsilon_1,
	\end{equation}
	where the value of $\epsilon_1$ is chosen in Lemma \ref{L_v1}.
	Then we can employ Lemma \ref{L_th2}, Lemma \ref{L_lower},
	and Corollary \ref{cor1} with $T=t_1+t_2$
	to infer that
	the local solution $(v, u,\theta)$
	satisfies  \eqref{N3.2}--\eqref{N3.4}
	for each $t\in[0,t_1+t_2]$.
	
	\vspace*{2mm}
	
	\noindent{\bf Step\,3}.
	We repeat the argument in Step 2, to extend our
	solution $(v, u,\theta)$  to the time interval $[0,t_1+t_2+t_3]$,
	where
	$$t_3=\min\left\{T_1-(t_1+t_2), T_0( C_2 ^{-1},
	C_5,C_6)\right\}.$$
	Assume that $|\alpha|\leq \min\{\alpha_1,\alpha_2\}$
	with constants $\alpha_1$ and $\alpha_2$ satisfying
	\eqref{alpha1} and \eqref{alpha2}.
	Continuing, after finitely many steps we construct the unique solution
	$(v, u,\theta)$ existing on $[0,T_1]$ and satisfying
	\eqref{N3.2}--\eqref{N3.4}  for each $t\in[0,T_1]$.
	
	\vspace*{2mm}
	
	\noindent{\bf Step\,4}.
	Since $T_1= 128 C_3^4$ and
	\begin{equation}\label{N3.1}
		\sup_{0\leq t\leq T_1}\|(\theta-1)(t)\|_1^2
		+\int_{{T_1}/{2}}^{T_1}\|\theta_{x}(t)\|_1^2\mathrm{d}t
		\leq C_3^2,
	\end{equation}
	we can find a $t_0'\in[T_1/2,T_1]$ such that
	$$\|\theta(t_0')-1\|\leq C_3,\quad
	\|\theta_x(t_0')\|\leq \tfrac{1}{8}C_3^{-1}.$$
	For if not, we have
	that $\|\theta_x(t)\|> \frac{1}{8}C_3^{-1}$ for each $t \in[T_1/2,T_1]$
	and hence
	\begin{equation*}
		\int_{{T_1}/{2}}^{T_1}\|\theta_{x}(t)\|_1^2\mathrm{d}t
		>\tfrac{1}{2}T_1 \left(\tfrac{1}{8}C_3^{-1}\right)^2
		= C_3^2.
	\end{equation*}
	This contradicts \eqref{N3.1}.
	Then it follows from
	Sobolev's inequality that
	$$\left\|(\theta-1)(t_0')\right\|_{L^{\infty}}\leq
	\sqrt2\left\|(\theta-1)(t_0')\right\|^{\frac12}\left\|\theta_x(t_0')\right\|^{\frac12}
	\leq \tfrac{1}{2},$$
	from which we get
	\begin{equation} \label{3theta1}
		\theta(t_0',x)\geq 1-\left\|(\theta-1)(t_0')\right\|_{L^{\infty}}
		\geq \tfrac{1}{2}\quad {\rm for\ all}\ x\in\mathbb{R}.
	\end{equation}
	We notice that
	\begin{equation*}
		\|(v-1,u,\theta-1)(t_0')\|_3
		\leq C_6,\quad
		v(t_0',x)\geq C_2\quad {\rm for\ all}\ x\in \mathbb{R}.
	\end{equation*}
	Now we apply Lemma \ref{local}  again by
	taking $(v(t_0',\cdot), u(t_0',\cdot), \theta(t_0',\cdot))$  as the initial data.
	Then we derive that the solution $(v, u, \theta)$
	exists on $[t_0',t_0'+t_1']$ with
	$t_1'= \min\{T_1,T_0( C_2 ^{-1},
	\frac{1}{2},C_6)\}$,
	and for all $(t,x)\in[t_0',t_0'+t_1']\times\mathbb{R}$,
	\begin{equation*}
		\|(v-1,u ,\theta-1)(t)\|_3^2
		+\int_{t_0'}^t
		\left[\|v_{x}(s)\|_2^2+\|(u_{x},\theta_{x})(s)\|_3^2\right]\mathrm{d}s
		\leq 4C_6^2
	\end{equation*}
	and
	\begin{equation*}
		v(t,x)\geq \tfrac{1}{2}C_2,\quad
		\theta(t,x)\geq \tfrac{1}{4}.
	\end{equation*}
	Therefore, the solution $(v, u, \theta)$ satisfies \eqref{N3.6}
	for all $t\in[0,t_0'+t_1']$.
	
	We take $|\alpha|\leq \min\{\alpha_1,\alpha_2,\alpha_3\}$ with
	$\alpha_i$ $(i=1,2,3)$ being positive constants satisfying
	\eqref{alpha1}, \eqref{alpha2} and
	\begin{equation}
		\label{alpha3}
		\left(\tfrac{1}{4}\right)^{-\alpha_3}\leq 2,\quad
		\left(2\sqrt{5}C_6\right)^{\alpha_3}\leq 2,\quad
		\Xi\left(\tfrac{1}{2}C_2,
		\tfrac{1}4,\sqrt{5}C_6\right)\alpha_3\leq\epsilon_1,
	\end{equation}
	where the value of $\epsilon_1$ is chosen in Lemma \ref{L_v1}.
	Then we can deduce from
	Lemmas \ref{L_th2} and \ref{L_lower}  with $T=t_0'+t_1'$
	that for each   $t\in[t_0',t_0'+t_1']$,
	the local solution $(v(t,x), u(t,x), \theta(t,x))$ satisfies
	\eqref{N3.3}--\eqref{N3.3a} and
	\begin{equation} \label{N3.7}
		\theta(t,x)\geq \frac{\inf_{x\in\mathbb{R}}\theta(t_0',x)}
		{C_4 \inf_{x\in\mathbb{R}}\theta(t_0',x)T_1+1}
		\geq \frac{1}
		{C_4 T_1+2}=:C_7 \quad {\rm for\ all}\ x\in \mathbb{R} .
	\end{equation}
	Here we have used the estimate \eqref{3theta1}.
	Combining \eqref{N3.2} and \eqref{N3.7} yields that for each
	$t\in[0,t_0'+t_1']$,
	\begin{equation} \label{3theta2}
		\theta(t,x)
		\geq \min\left\{C_5,C_7\right\}
		:=C_8 \quad {\rm for\ all}\ x\in \mathbb{R} .
	\end{equation}
	We deduce from \eqref{3theta2} and
	Corollary \ref{cor1} that
	there exists some positive constant $C_9$,
	depending on
	$C_8$, $\Pi_0$,
	$V_0$, and
	$H(C_2)$,
	such that
	for each $t\in[0,t_0'+t_1']$,
	\begin{equation} \label{N3.8}
		\|(v-1,u ,\theta-1)(t)\|_3^2
		+\int_0^t
		\left[\|v_{x}(s)\|_2^2+\|(u_{x},\theta_{x})(s)\|_3^2\right]\mathrm{d}s
		\leq C_9^2.
	\end{equation}
	
	\vspace*{2mm}
	
	\noindent{\bf Step\,5}.
	Next if we take $(v(t_0'+t_1',\cdot), u(t_0'+t_1',\cdot), \theta(t_0'+t_1',\cdot))$ as the initial data,
	we  apply Lemma \ref{local} to construct the solution  $(v, u, \theta)$
	existing on the time
	interval $[0,t_0'+t_1'+t_2']$ with
	$$t_2'=\min\left\{T_1-t'_1,T_0(C_2,C_8,C_9)\right\},$$
	such that for all $(t,x)\in[t_0'+t_1',t_0'+t_1'+t_2']\times\mathbb{R}$,
	\begin{equation*}
		v(t,x)\geq \tfrac{1}{2}C_2,\quad
		\theta(t,x)\geq \tfrac{1}{2}C_8,
	\end{equation*}
	and
	\begin{equation}\label{N3.9}
		\|(v-1,u ,\theta-1)(t)\|_3^2
		+\int_{t_0'+t_1'}^t
		\left[\|v_{x}(s)\|_2^2+\|(u_{x},\theta_{x})(s)\|_3^2\right]\mathrm{d}s
		\leq 4C_9^2.
	\end{equation}
	Combine \eqref{N3.8} and \eqref{N3.9} to obtain that
	for all $t\in[0,t_0'+t_1'+t_2']$,
	\begin{equation}\label{N3.10}
		\|(v-1,u ,\theta-1)(t)\|_3^2
		+\int_{0}^t
		\left[\|v_{x}(s)\|_2^2+\|(u_{x},\theta_{x})(s)\|_3^2\right]\mathrm{d}s
		\leq 5C_9^2.
	\end{equation}
	
	Take $0<\alpha\leq \min\{\alpha_1,\alpha_2,\alpha_3,\alpha_4\}$,
	where $\alpha_i$ $(i=1,2,3)$ are positive constants satisfying
	\eqref{alpha1}, \eqref{alpha2},  \eqref{alpha3}, and
	\begin{equation}
		\label{alpha4}
		\left(\tfrac{1}{2}C_8\right)^{-\alpha_4}\leq 2,\quad
		\left(2\sqrt{5}C_9\right)^{\alpha_4}\leq 2,\quad
		\Xi\left(\tfrac{1}{2}C_2,
		\tfrac{1}{2}C_8,\sqrt{5}C_9\right)\alpha_4\leq \epsilon_1,
	\end{equation}
	where the value of $\epsilon_1$ is chosen in Lemma \ref{L_v1}.
	Then we infer from
	Lemma \ref{L_th2},
	Lemma \ref{L_lower} and Corollary \ref{cor1} with $T=t_0'+t_1'+t_2'$ that
	the local solution $(v(t,x), u(t,x), \theta(t,x))$ satisfies  \eqref{3theta2}--\eqref{N3.8}
	for each $t\in[0,t_0'+t_1'+t_2']$.
	By assuming $|\alpha|\leq \min\{\alpha_1,\alpha_2,\alpha_3,\alpha_4\}$,
	we can repeatedly apply the argument above to extend the local solution
	to the time interval $[0,t_0'+T_1]$.
	Furthermore, we deduce that
	\eqref{3theta2}--\eqref{N3.8} hold for each $t\in[0,t_0'+T_1]$.
	In view of $t_0'+T_1\geq {3}T_1/{2}$,
	we have shown that the Cauchy problem  \eqref{NS_L}--\eqref{far},
	\eqref{transport}--\eqref{h}
	admits a unique solution $(v, u, \theta)
	\in X(0,\frac{3}{2}T_1;C_2,C_8,C_9)$
	on the time interval $[0,\frac{3}{2}T_1]$.
	
	\vspace*{2mm}
	
	\noindent{\bf Step\,6}.
	We take $|\alpha|\leq \min\{\alpha_1,\alpha_2,\alpha_3,\alpha_4\}$.
	As in Steps 4 and 5,
	we can find $t_0''\in[t_0'+T_1/2,t_0'+T_1]$ such that
	the Cauchy problem  \eqref{NS_L}--\eqref{far},
	\eqref{transport}--\eqref{h} admits a unique solution
	$(v, u, \theta)$  on  $[0,t_0''+T_1]$, which satisfies
	\eqref{3theta2}--\eqref{N3.8} for each $t\in[0,t_0''+T_1]$.
	Since $t_0''+T_1\geq t_0'+{3}T_1/{2}\geq 2T_1$,
	we have extended the local solution $(v, u, \theta)$ to
	the time interval $[0,2T_1]$.
	Repeating the above procedure, we can then extend the
	solution $(v, u, \theta)$ step by step to a global one provided that
	$|\alpha|\leq \min\{\alpha_1,\alpha_2,\alpha_3,\alpha_4\}$.
	
	Choosing
	\begin{equation}\label{epsilon0}
		\epsilon_0=\min\{\alpha_1,\alpha_2,\alpha_3,\alpha_4\},
	\end{equation}
	where $\alpha_i$ $(i=1,2,3,4)$ are given by
	\eqref{alpha1}, \eqref{alpha2},
	\eqref{alpha3}, and \eqref{alpha4},
	we then derive that the Cauchy problem
	\eqref{NS_L}--\eqref{far},
	\eqref{transport}--\eqref{h}
	has a unique solution $(v,u,\theta)$  satisfying \eqref{N3.3}, \eqref{3theta2},
	and \eqref{N3.8} for each $t\in[0,\infty)$.
	Thus we have
	\begin{equation} \label{3_end}
		\sup_{0\leq t<\infty}\|(v-1,u,\theta-1)(t)\|_3^2
		+\int_0^{\infty}
		\left[\|v_x(t)\|_2^2+\|(u_{x},\theta_{x})(t)\|_3^2\right]\mathrm{d}t
		\leq C_9^2,
	\end{equation}
	from which we derive that
	the solution $(v, u,\theta)\in X(0,\infty;
	C_2,C_8,C_9)$.
	
	The large-time behavior \eqref{thm4} follows
	from \eqref{3_end}
	by using a standard argument
	(see  \cite{MN04book}).
	
	Recall that $\epsilon_1$, $C_i$ $(i=1,2,3,4,5,7,8)$
	depend only on
	$\Pi_0$,
	$V_0$, and
	$H(V_0)$,
	while $C_6$ and $C_9$ depend only on
	$\Pi_0$,
	$V_0$, and
	$H(C_2)$.
	According to the definition
	\eqref{epsilon0} of $\epsilon_0$,
	we can conclude the proof of Theorem \ref{thm}.

\bigbreak

\begin{center}
\textbf{Acknowledgement}
\end{center}

	The authors thank the anonymous referees for many helpful and 
	valuable comments that substantially improved the presentation of the paper.  


\bibliographystyle{siam}

\bibliography{bib_viscosity}

\def\cprime{$'$} \def\cprime{$'$}
  \def\ocirc#1{\ifmmode\setbox0=\hbox{$#1$}\dimen0=\ht0 \advance\dimen0
  by1pt\rlap{\hbox to\wd0{\hss\raise\dimen0
  \hbox{\hskip.2em$\scriptscriptstyle\circ$}\hss}}#1\else {\accent"17 #1}\fi}
  \def\cprime{$'$}
\begin{thebibliography}{10}

\bibitem{AZ92MR1187869}
{\sc A.~A. Amosov and A.~A. Zlotnik}, {\em Solvability ``in the large'' of a
  system of equations for the one-dimensional motion of an inhomogeneous
  viscous heat-conducting gas}, Math. Notes, 52 (1992), pp.~753--763.

\bibitem{AKM90MR1035212}
{\sc S.~N. Antontsev, A.~V. Kazhikhov, and V.~N. Monakhov}, {\em Boundary value
  problems in mechanics of nonhomogeneous fluids}, vol.~22 of Studies in
  Mathematics and its Applications, North-Holland Publishing Co., Amsterdam,
  1990.
\newblock Translated from the Russian.

\bibitem{BD07MR2297248}
{\sc D.~Bresch and B.~Desjardins}, {\em On the existence of global weak
  solutions to the {N}avier-{S}tokes equations for viscous compressible and
  heat conducting fluids}, J. Math. Pures Appl. (9), 87 (2007), pp.~57--90.

\bibitem{CC90MR1148892}
{\sc S.~Chapman and T.~G. Cowling}, {\em The mathematical theory of nonuniform
  gases}, Cambridge Mathematical Library, Cambridge University Press,
  Cambridge, third~ed., 1990.
\newblock An account of the kinetic theory of viscosity, thermal conduction and
  diffusion in gases, In co-operation with D. Burnett, With a foreword by Carlo
  Cercignani.

\bibitem{CHT00MR1789926}
{\sc G.-Q. Chen, D.~Hoff, and K.~Trivisa}, {\em Global solutions of the
  compressible {N}avier-{S}tokes equations with large discontinuous initial
  data}, Comm. Partial Differential Equations, 25 (2000), pp.~2233--2257.

\bibitem{Da82MR653464}
{\sc C.~M. Dafermos}, {\em Global smooth solutions to the initial-boundary
  value problem for the equations of one-dimensional nonlinear
  thermoviscoelasticity}, SIAM J. Math. Anal., 13 (1982), pp.~397--408.

\bibitem{DH82MR661710}
{\sc C.~M. Dafermos and L.~Hsiao}, {\em Global smooth thermomechanical
  processes in one-dimensional nonlinear thermoviscoelasticity}, Nonlinear
  Anal., 6 (1982), pp.~435--454.

\bibitem{DLZ09MR2439413}
{\sc R.~Duan, H.~Liu, and H.~Zhao}, {\em Nonlinear stability of rarefaction
  waves for the compressible {N}avier-{S}tokes equations with large initial
  perturbation}, Trans. Amer. Math. Soc., 361 (2009), pp.~453--493.

\bibitem{E10MR2597943}
{\sc L.~C. Evans}, {\em Partial differential equations}, vol.~19 of Graduate
  Studies in Mathematics, American Mathematical Society, Providence, RI,
  second~ed., 2010.

\bibitem{Fe07MR2323704}
{\sc E.~Feireisl}, {\em Mathematical theory of compressible, viscous, and heat
  conducting fluids}, Comput. Math. Appl., 53 (2007), pp.~461--490.

\bibitem{Haspot1411.5503}
{\sc B.~Haspot}, {\em Existence of global strong solution for the compressible
  {N}avier--{S}tokes equations with degenerate viscosity coefficients in 1{D}},
  Preprint at \href{http://arxiv.org/abs/1411.5503}{arXiv:1411.5503},  (2014).

\bibitem{JK10MR2644363}
{\sc H.~K. Jenssen and T.~K. Karper}, {\em One-dimensional compressible flow
  with temperature dependent transport coefficients}, SIAM J. Math. Anal., 42
  (2010), pp.~904--930.

\bibitem{J98MR1748226}
{\sc S.~Jiang}, {\em Large-time behavior of solutions to the equations of a
  viscous polytropic ideal gas}, Ann. Mat. Pura Appl. (4), 175 (1998),
  pp.~253--275.

\bibitem{J99MR1671920}
\leavevmode\vrule height 2pt depth -1.6pt width 23pt, {\em Large-time behavior
  of solutions to the equations of a one-dimensional viscous polytropic ideal
  gas in unbounded domains}, Comm. Math. Phys., 200 (1999), pp.~181--193.

\bibitem{J02MR1912419}
\leavevmode\vrule height 2pt depth -1.6pt width 23pt, {\em Remarks on the
  asymptotic behaviour of solutions to the compressible {N}avier-{S}tokes
  equations in the half-line}, Proc. Roy. Soc. Edinburgh Sect. A, 132 (2002),
  pp.~627--638.

\bibitem{Ka68MR0227619}
{\sc J.~I. Kanel{\cprime}}, {\em A model system of equations for the
  one-dimensional motion of a gas}, Differential Equations, 4 (1968),
  pp.~374--380.

\bibitem{KN81MR637519}
{\sc S.~Kawashima and T.~Nishida}, {\em Global solutions to the initial value
  problem for the equations of one-dimensional motion of viscous polytropic
  gases}, J. Math. Kyoto Univ., 21 (1981), pp.~825--837.

\bibitem{KO82MR694940}
{\sc S.~Kawashima and M.~Okada}, {\em Smooth global solutions for the
  one-dimensional equations in magnetohydrodynamics}, Proc. Japan Acad. Ser. A
  Math. Sci., 58 (1982), pp.~384--387.

\bibitem{Ka85MR791841}
{\sc B.~Kawohl}, {\em Global existence of large solutions to initial-boundary
  value problems for a viscous, heat-conducting, one-dimensional real gas}, J.
  Differential Equations, 58 (1985), pp.~76--103.

\bibitem{K82MR651877}
{\sc A.~V. Kazhikhov}, {\em On the {C}auchy problem for the equations of a
  viscous gas}, Siberian Math. J., 23 (1982), pp.~44--49.

\bibitem{KS77MR0468593}
{\sc A.~V. Kazhikhov and V.~V. Shelukhin}, {\em Unique global solution with
  respect to time of initial-boundary value problems for one-dimensional
  equations of a viscous gas}, J. Appl. Math. Mech., 41 (1977), pp.~273--282.

\bibitem{LJX08MR2410901}
{\sc H.-L. Li, J.~Li, and Z.~Xin}, {\em Vanishing of vacuum states and blow-up
  phenomena of the compressible {N}avier-{S}tokes equations}, Comm. Math.
  Phys., 281 (2008), pp.~401--444.

\bibitem{LL}
{\sc J.~Li and Z.~Liang}, {\em Some uniform estimates and large-time behavior
  of solutions to one-dimensional compressible {N}avier-{S}tokes system in
  unbounded domains with large data}, Arch. Ration. Mech. Anal., 220 (2016),
  pp.~1195--1208.

\bibitem{LYZZMR3225502}
{\sc H.~Liu, T.~Yang, H.~Zhao, and Q.~Zou}, {\em One-dimensional compressible
  {N}avier-{S}tokes equations with temperature dependent transport coefficients
  and large data}, SIAM J. Math. Anal., 46 (2014), pp.~2185--2228.

\bibitem{LXY98MR1485360}
{\sc T.-P. Liu, Z.~Xin, and T.~Yang}, {\em Vacuum states for compressible
  flow}, Discrete Contin. Dynam. Systems, 4 (1998), pp.~1--32.

\bibitem{MN04book}
{\sc A.~Matsumura and K.~Nishihara}, {\em Global solutions for nonlinear
  differential equations. Mathematical analysis on compressible viscous
  fluids}, Nippon Hyoron Sha, Tokyo, 2004.

\bibitem{MV08MR2368905}
{\sc A.~Mellet and A.~Vasseur}, {\em Existence and uniqueness of global strong
  solutions for one-dimensional compressible {N}avier-{S}tokes equations}, SIAM
  J. Math. Anal., 39 (2007/08), pp.~1344--1365.

\bibitem{NYZ04MR2083790}
{\sc K.~Nishihara, T.~Yang, and H.~Zhao}, {\em Nonlinear stability of strong
  rarefaction waves for compressible {N}avier-{S}tokes equations}, SIAM J.
  Math. Anal., 35 (2004), pp.~1561--1597 (electronic).

\bibitem{OK83MR692729}
{\sc M.~Okada and S.~Kawashima}, {\em On the equations of one-dimensional
  motion of compressible viscous fluids}, J. Math. Kyoto Univ., 23 (1983),
  pp.~55--71.

\bibitem{PZ15MR3291375}
{\sc R.~Pan and W.~Zhang}, {\em Compressible {N}avier-{S}tokes equations with
  temperature dependent heat conductivity}, Commun. Math. Sci., 13 (2015),
  pp.~401--425.

\bibitem{TYZZMR3032988}
{\sc Z.~Tan, T.~Yang, H.~Zhao, and Q.~Zou}, {\em Global solutions to the
  one-dimensional compressible {N}avier-{S}tokes-{P}oisson equations with large
  data}, SIAM J. Math. Anal., 45 (2013), pp.~547--571.

\bibitem{VK65}
{\sc W.~G. Vincenti and C.~H. Kruger, Jr.}, {\em Introduction to Physical Gas
  Dynamics}, John Wiley and Sons, New York, 1965.

\bibitem{WWZ15}
{\sc L.~Wan, T.~Wang, and Q.~Zou}, {\em Stability of stationary solutions to
  the outflow problem for full compressible {N}avier--{S}tokes equations with
  large initial perturbation}, Nonlinearity, 29 (2016), pp.~1329--1354.

\bibitem{W16MR3461630}
{\sc T.~Wang}, {\em One dimensional {$p$}-th power {N}ewtonian fluid with
  temperature-dependent thermal conductivity}, Commun. Pure Appl. Anal., 15
  (2016), pp.~477--494.

\bibitem{YZ02CMPMR1936794}
{\sc T.~Yang and C.~Zhu}, {\em Compressible {N}avier-{S}tokes equations with
  degenerate viscosity coefficient and vacuum}, Comm. Math. Phys., 230 (2002),
  pp.~329--363.

\bibitem{ZR67physics}
{\sc Y.~B. Zel'dovich and Y.~P. Raizer}, {\em Physics of shock waves and
  high-temperature hydrodynamic phenomena. {V}ol. 2}, Graduate Studies in
  Mathematics, Academic Press, New York, 1967.

\bibitem{ZA97MR1474911}
{\sc A.~A. Zlotnik and A.~A. Amosov}, {\em On the stability of generalized
  solutions of equations of one-dimensional motion of a viscous heat-conducting
  gas}, Siberian Math. J., 38 (1997), pp.~663--684.

\end{thebibliography}

\end{document}